\let\inf\relax \DeclareMathOperator*\inf{\vphantom{p}inf}
\let\max\relax \DeclareMathOperator*\max{\vphantom{p}max}
\let\min\relax \DeclareMathOperator*\min{\vphantom{p}min}
\let\argmax\relax \DeclareMathOperator*\argmax{\vphantom{p}argmax}
\let\argmin\relax \DeclareMathOperator*\argmin{\vphantom{p}argmin}
\newcommand{\be}{\begin{equation}}
\newcommand{\ee}{\end{equation}}
\numberwithin{equation}{section}
\numberwithin{figure}{section}
\newtheorem{theorem}{Theorem}[section]
\theoremstyle{plain}
\newtheorem{definition}[theorem]{Definition}
\newtheorem{lemma}[theorem]{Lemma}
\newtheorem{proposition}[theorem]{Proposition}
\newtheorem{remark}[theorem]{Remark}
\numberwithin{figure}{section}
\newcommand{\RR}{{\mathbb{R}}}
\newcommand{\FF}{{\mathbb{F}}}
\newcommand{\DD}{{\mathbb{D}}}
\newcommand{\EE}{{\mathbb{E}}}
\newcommand{\PP}{{\mathbb{P}}}
\newcommand{\CF}{{\mathcal{F}}}
\newcommand{\CV}{{\mathcal{V}}}
\newcommand{\CU}{{\mathcal{U}}}
\newcommand{\CO}{{\mathcal{O}}}
\newcommand{\CM}{{\mathcal{M}}}
\newcommand{\CA}{{\mathcal{A}}}
\newcommand{\CS}{{\mathcal{S}}}
\newcommand{\CT}{{\mathcal{T}}}
\newcommand{\CB}{{\mathcal{B}}}
\newcommand{\indic}{{\mathds{1}}}
\newcommand{\comment}[1]{}
\def\p{\vskip4truept \noindent}
\newcommand{\tr}{{\!\!~^\top\!\!}}
\newcommand{\Leb}{{\rm Leb}}
\newcommand{\tq}{{\, | \,}}
\title{Continuous-time Markov games with asymmetric information.}
\author{Fabien Gensbittel\thanks{Toulouse School of Economics, University Toulouse 1 Capitole, \newline
 Manufacture des Tabacs, MF213, 21, All\'{e}e de Brienne 31015 Toulouse Cedex 6. \newline E-mails:\href{mailto:fabien.gensbittel@tse-fr.eu}{fabien.gensbittel@tse-fr.eu}}}
\begin{document}
\maketitle

\begin{abstract}
We study a two-player zero-sum stochastic differential game with asymmetric information where the payoff depends on a controlled continuous-time Markov chain $X$ with finite state space  which is only observed by player 1. 
This model was already studied in Cardaliaguet et al \cite{cardaetal} through an approximating sequence of discrete-time games. Our first contribution is the proof of the existence of the value in the continuous-time model based on duality techniques. This value is shown to be the unique solution of the same Hamilton-Jacobi equation with convexity constraints which characterized  the limit value obtained in \cite{cardaetal}. Our second main contribution is to provide a simpler equivalent formulation for this Hamilton-Jacobi equation using directional derivatives and exposed points, which we think is interesting for its own sake as the associated comparison principle has a very simple proof which avoids all the technical machinery of viscosity solutions.
\end{abstract}


\p
\textbf{Keywords:} Differential Games, Incomplete information, Controlled Markov chains, Hamilton-Jacobi equations, 
\p
\textbf{AMS Classification:} 49N30, 49N70, 91A05; 91A10; 91A15; 91A23.
\section{Introduction} 

The present work contributes to the literature on zero-sum differential games with incomplete information, and is more precisely related to the model of differential games with asymmetric information developed in Cardaliaguet \cite{cardadiff} which
already led to various extensions and generalizations (see e.g. Cardaliaguet \cite{cardadouble}, Cardaliaguet and Rainer \cite{cardastochdiff}\cite{cardaexemple}\cite{carda12}, Gr\"{u}n \cite{grunstopping}\cite{grungirsanov}, Oliu-Barton \cite{Oliubarton}, Buckdahn, Quinquampoix, Rainer and Xu \cite{BQRW}, Jimenez, Quincampoix and Xu \cite{JQW}, Wu \cite{WU}, Jimenez and Quincampoix \cite{JQ}).

Most of the literature on zero-sum dynamic games with asymmetric information, including the above mentioned works, deals with models where the payoff-relevant parameters of the game that are partially unknown (say information parameters) do not evolve over time. Some recent works focus on models of dynamic games with asymmetric information and evolving information parameters. Discrete-time models were analyzed in Renault \cite{Renault2006}, Neyman \cite{neyman}, Gensbittel and Renault \cite{GensbittelRenault}; some continuous-time models were analyzed using an approximating sequence of discrete-time games in  Cardaliaguet, Rainer, Rosenberg and Vieille \cite{cardaetal}, Gensbittel \cite{gensbittel2013} and Gensbittel and Rainer \cite{obsbrown}, and a model of continuous-time stopping game was analyzed in Gensbittel and Gr\"{u}n \cite{GensbittelGrun}.

In this paper we consider a two player zero-sum stochastic differential game with asymmetric information. The payoff depends on some continuous time controlled Markov chain $(X_t)_{t \geq 0}$ with finite state space $K$, having a commonly known initial law $p$ and infinitesimal generator $R(u_t,v_t)_{t \geq 0}$ where $u_t$ and $v_t$ are respectively the controls of player $1$ and player $2$. We assume that $X$ is only observed by player 1 while the controls are publicly observed, so that the control $u_t$ depends on the trajectory of $X$ up to time $t$ while the control $v_t$ does not. The payoff of player 1 is given by
\[ \EE[ \int_0^\infty r e^{-rt} g(X_t,u_t,v_t)dt], \]
where $r$ is the discount factor and $g$ is a bounded payoff function. 
This model is therefore a continuous-time version of the model of discrete-time stochastic games with discounted payoffs where the state variable is only observed by player $1$ and actions are publicly observed. In particular, there is incomplete information about a stochastic process evolving over time.   
We prove that this game has a value $W(p)$ when players are allowed to use suitable mixed non-anticipative strategies and provide a variational characterization for $W$.

This model is was already studied by  Cardaliaguet, Rainer, Rosenberg and Vieille in \cite{cardaetal}. However, the analysis in \cite{cardaetal} was only done through an approximating sequence of discrete-time games in which the players play more and more frequently. Let us emphasize that no formal definition of the continuous-time game was given in \cite{cardaetal}.

In this work, we define the continuous-time game formally and prove the existence of the value $W(p)$ in the continuous-time model directly. We prove that $W$ is the unique solution of the same Hamilton-Jacobi equation with convexity constraints that was introduced in  \cite{cardaetal} to characterize the limit of the values of the discrete-time games along the approximating sequence.

Our second main contribution is to obtain an equivalent simpler formulation for this Hamilton-Jacobi equation which is reminiscent of the variational representation for the value of repeated games with asymmetric information given by Mertens and Zamir \cite{MZ} and actually inspired by the notion of dual solution initially proposed by Cardaliaguet in \cite{cardadiff} (see also Gensbittel and Gr\"{u}n \cite{GensbittelGrun} for a similar formulation in the context of stopping games). One of the main advantage of such a formulation is that the associated comparison principle has a very simple proof which avoids all the complex machinery of viscosity solutions.

The paper is structured as follows. In section \ref{section_model} we give a formal description of the model and state the main results. In section \ref{section_HJB}, we analyze the Hamilton-Jacobi equation with convexity constraints introduced in \cite{cardaetal} and provide an equivalent simpler formulation together with a simple proof of the associated comparison principle. In section \ref{section_DPP}, we prove that the game has a value which is the unique solution of the Hamilton-Jacobi equation analyzed in section \ref{section_HJB}.

\section{Model and main results}\label{section_model}

\subsection{Notation}

Let $K$ be a non-empty finite set which we identify with $\{1,...,|K|\}$ and $\Delta(K)=\{ p \in \RR^{K} \tq \forall k \in K, p_k \geq 0, \; \sum_{k \in K} p_k =1\}$ be the set of probabilities over $K$. We use the notation $\delta_k \in \Delta(K)$ for the Dirac mass at $k \in K$.

Let $\Omega=\DD([0,\infty), K)$ denote the set of c\`{a}dl\`{a}g (right-continuous with left limits) trajectories $\omega=(\omega(t))_{t \geq 0}$ taking values in $K$ ($K$ being endowed with the discrete topology). For all $t\geq 0$, the canonical process on $\Omega$ is defined by $X_t(\omega)=\omega(t)$ and $\FF^X=(\CF^X_t)_{t \geq 0}$ denotes the canonical filtration, i.e. $\CF^X_t=\sigma(X_s, 0\leq s\leq t)$. 
We define $\CF=\CF_\infty$.
For all $t\geq 0$, let $\Omega_t=\DD([0,t], K)$, which is endowed with the $\sigma$-algebra generated by the projections $\omega_t \in \Omega_t \rightarrow \omega_t(s)$ for all $s\in [0,t]$.

Let $U$ and $V$ be non-empty Polish spaces which represent the sets of controls for player $1$ and $2$ respectively. 
Let $\CU$ (resp. $\CU_t$ for all $t \geq 0$) denote the set of Borel-measurable maps from $[0,\infty)$ (resp. $[0,t]$) to $U$, endowed with the topology of convergence in Lebesgue measure. 
The sets $\CV$ and $\CV_t$ for all $t \geq 0$ of $V$-valued maps are defined similarly.
Note that the above definition implies that $\CU_{0}$ and $\CV_{0}$ are endowed with the trivial $\sigma$-algebra. 

Let $\CM$ denote the set of $K\times K$ matrices $M=(M_{i,j})_{i,j \in K}$ of transition rates, i.e. for all $i,j \in K$ such that $i\neq j$, $M_{i,j} \geq 0$ and for all $i\in K$, $M_{i,i}=-\sum_{j \neq i} M_{i,j}$.

Let $r>0$ be a positive discount factor, $g: K\times U \times V \rightarrow [0,1]$ be a measurable payoff function and $R: U\times V \rightarrow \CM$ a bounded measurable intensity function.

In the sequel, all the topological spaces $E$ are endowed with their Borel $\sigma$-algebra denoted $\CB(E)$, all the products are endowed with the product $\sigma$-algebra.

For any function $f: I \rightarrow E$ and $J\subset I$, $f|_J$ denotes the restriction of $f$ to $J$.

In order to define the game, we first need to recall what is a controlled Markov chain $X$. At first, the term Markov chain is abusively used here, as for controlled diffusions, since the processes we consider are not Markovian, and an alternative (less ambiguous) denomination could be jump processes with controlled intensity.

Let $\PP$ be a probability measure on $(\Omega,\CF)$ and $\Lambda$ be an $\FF^X$-progressively measurable bounded process with values in $\CM$. For all $i,j \in K$ with $i\neq j$, let $N^{i,j}_t$ denotes the number of jumps of $X$ from state $i$ to state $j$ in the time-interval $(0,t]$. The process $X$ is said to have $\FF^X$-intensity $\Lambda$ if for all pairs $(i,j) \in K^2$ with $i\neq j$ 
the counting process $N_t^{i,j}$ has $\FF^X$-intensity $\indic_{X_t=i}\Lambda_{i,j}$, which means that for all non-negative $\FF$-predictable processes $Z$
\[ \EE[\int_0^\infty Z_s dN^{X,i,j}_s ] =\EE[\int_0^\infty Z_s \indic_{X_s=i}\Lambda_{i,j} ds ],\]
or equivalently that the process $N^{i,j}_t - \int_0^t \indic_{X_s=i}\Lambda_{i,j} ds$ is a $(\PP,\FF^X)$ martingale.

\subsection{Strategies}

In order to avoid all the technical considerations related to measurability, we work here with piecewise-constant controls that are left-continuous. Note however that the results can easily be extended to larger families of controls and strategies.

\begin{definition}\label{def:grid}
We call $T=\{t_i,i\geq 0\}$ a grid if the sequence $(t_i)_{i\geq 0}$ is increasing, and satisfies $t_0=0$ and $\lim_{i \rightarrow \infty}t_i=+\infty$. We say that a grid $T'$ is finer than $T$ if $T' \subset T$.
\end{definition}

Let us define $\CS=\{([0,1]^n,\CB([0,1]^n),\Leb^{\otimes n}), n \geq 1 \}$, where $\Leb$ denotes the Lebesgue measure. $\CS$ is the family of probability spaces available to the players and is stable by products.

\begin{definition}
A \textbf{pure strategy for player 2} is a measurable map $\beta: \CU \rightarrow \CV$ such that there exists a grid $T=\{t_i, i \geq 0\}$ such that 
\[ \forall t \geq 0,\; \beta(u)_t = \sum_{i \geq 0}\indic_{(t_i,t_{i+1}]}(t) \beta^i(u|_{[0,t_i]}) ,\]
where for all $i\geq 0$, $\beta^i : \CU_{t_i} \rightarrow V$ is measurable. The set of pure strategies of player $2$ is denoted $\CT$.
\p
A \textbf{mixed strategy  for player 2} is a  pair $((M_\beta,\CA_\beta,\lambda_\beta),\beta)$ where $(M_\beta,\CA_\beta,\lambda_\beta)$ is a probability space in $\CS$ and $\beta: M_\beta \times \CU \rightarrow \CV$ is a measurable map such that there exists a grid $T=\{t_i, i \geq 0\}$ such that 
\[ \forall t \geq 0, \;\beta(\xi_\beta,u)_t = \sum_{i \geq 0}\indic_{(t_i,t_{i+1}]}(t) \beta^i(\xi_\beta,u|_{[0,t_i]}) ,\]
where for all $i\geq 0$, $\beta^i : M_\beta\times \CU_{t_i} \rightarrow V$ is measurable. The set of mixed strategies of player 2 is denoted $\widehat{\CT}$.
Note that for all $\xi_\beta\in M_\beta$, $\beta(\xi_\beta,.)$ is  a pure strategy with grid $T$ that will be denoted $\beta(\xi_\beta)$.
\p
A \textbf{pure strategy  for player 1} is a measurable map $\alpha: \Omega\times \CV \rightarrow \CU$  such that there exists a grid $T=\{t_i, i \geq 0\}$ with 
\[ \forall t \geq 0,\; \alpha(\omega,v)_t = \sum_{i \geq 0}\indic_{(t_i,t_{i+1}]}(t) \alpha^i(\omega|_{[0,t_i]},v|_{[0,t_i]}) ,\]
where for all $i\geq 0$, $\alpha^i : \Omega_{t_i}\times \CV_{t_i} \rightarrow U$ is measurable. The set of pure strategies of player $1$ is denoted $\Sigma$.
\p
A \textbf{mixed strategy for player 1} is a  pair $((M_\alpha,\CA_\alpha,\lambda_\alpha),\alpha)$ where $(M_\alpha,\CA_\alpha,\lambda_\alpha)$ is a probability space in $\CS$ and $\alpha: M_\alpha \times \Omega \times \CV \rightarrow \CU$ is a measurable map such that there exists a grid $T=\{t_i, i \geq 0\}$ such that 
\[ \forall t \geq 0, \;\alpha(\xi_\alpha,\omega,v)_t = \sum_{i \geq 0}\indic_{(t_i,t_{i+1}]}(t) \alpha^i(\xi_\alpha,\omega|_{[0,t_i]},v|_{[0,t_i]}) ,\]
where for all $i\geq 0$, $\alpha^i : M_\alpha \times \Omega_{t_i}\times  \CV_{t_i} \rightarrow U$ is measurable. The set of mixed strategies of player 1 is denoted $\widehat\Sigma$. 
Note that for all $\xi_\alpha \in M_\alpha$, $\alpha(\xi_\alpha,.)$ is a pure strategy with grid $T$ that will be denoted $\alpha(\xi_\alpha)$.
\end{definition}

We will simply write $\alpha$ (resp. $\beta$) instead of $((M_\alpha,\CA_\alpha,\lambda_\alpha),\alpha)$ (resp. $((M_\beta,\CA_\beta,\lambda_\beta),\beta)$) whenever there is no risk of confusion. 

We will identify pure strategies as particular mixed strategies in which the probability space is reduced to as single point.

\begin{remark}
According to the above definition, the value at time $0$ of all the controls induced by a strategy is fixed to be zero. However, the measurable maps appearing in the definition of the strategies do not depend on this value since the atoms of the Borel $\sigma$-algebra of $\CU_t$ and $\CV_t$ for $t \geq 0$ are equivalence classes of functions with respect to equality Lebesgue almost everywhere. Note also that with our definition of $\CU_{0}$ and $\CV_{0}$, the value of a control induced by strategy on the first interval $(t_0,t_1]$ of the grid does not depend on the control of his opponent.  
Moreover, the reader may check that the probabilities on $\Omega$ we consider later do not depend on the value at zero of the controls, and this will be implicitly used when proving dynamic programming inequalities since we will consider continuation controls for which formally the value at time zero is not zero.   
\end{remark}
The next lemma contains an obvious but useful remark. 
\begin{lemma}\label{lem:grids}
Let $((M_\alpha,\CA_\alpha,\lambda_\alpha),\alpha) \in \widehat\Sigma$ with grid $T=\{t_i,i\geq 0\}$, and $\bar T=\{\bar t_i, i \geq 0\}$ a grid finer than $T$. There exists a mixed strategy $((M_\alpha,\CA_\alpha,\lambda_\alpha),\bar \alpha)$ with grid $\bar T$ such that: 
\[ \forall \xi_\alpha \in M_\alpha, \forall \omega \in \Omega, \forall v \in \CV, \; \alpha(\xi_\alpha,\omega,v) =\bar \alpha(\xi_\alpha,\omega,v).\]
The same is true for mixed strategies of Player $2$.
\end{lemma}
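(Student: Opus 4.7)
The plan is to construct $\bar\alpha$ by explicit refinement, copying on each new subinterval the constant value that $\alpha$ would have produced on the enclosing coarser interval. Since $\bar T$ is finer than $T$ (so that every interval $(t_i,t_{i+1}]$ decomposes into finitely many intervals of $\bar T$), define for each $i\geq 0$ the integer $n_i$ by $\bar t_{n_i}=t_i$, so that
\[ (t_i,t_{i+1}] = \bigsqcup_{j=n_i}^{n_{i+1}-1}(\bar t_j,\bar t_{j+1}].\]
For each $j\geq 0$ let $i(j)$ denote the unique index with $\bar t_j \in [t_{i(j)},t_{i(j)+1})$, equivalently $n_{i(j)}\leq j<n_{i(j)+1}$.

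Next I would define, for every $j\geq 0$, the map $\bar\alpha^j:M_\alpha\times\Omega_{\bar t_j}\times\CV_{\bar t_j}\to U$ by
\[ \bar\alpha^j(\xi,\bar\omega,\bar v) \;:=\; \alpha^{i(j)}\bigl(\xi,\;\bar\omega|_{[0,t_{i(j)}]},\;\bar v|_{[0,t_{i(j)}]}\bigr).\]
This is well-defined because $t_{i(j)}\leq\bar t_j$, and measurable as the composition of the given measurable map $\alpha^{i(j)}$ with the obviously measurable restriction maps $\Omega_{\bar t_j}\to\Omega_{t_{i(j)}}$ and $\CV_{\bar t_j}\to\CV_{t_{i(j)}}$ (the latter is measurable since projections/restrictions preserve Lebesgue-measure equivalence classes). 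The resulting family $(\bar\alpha^j)_{j\geq 0}$ therefore defines a mixed strategy $((M_\alpha,\CA_\alpha,\lambda_\alpha),\bar\alpha)\in\widehat\Sigma$ with grid $\bar T$.

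It then remains to check the pointwise identity $\bar\alpha(\xi_\alpha,\omega,v)_t=\alpha(\xi_\alpha,\omega,v)_t$ for all $t\geq 0$. Fix $t>0$ and let $i\geq 0$ be the unique index with $t\in(t_i,t_{i+1}]$; there is a unique $j$ with $t\in(\bar t_j,\bar t_{j+1}]$, and by construction $i(j)=i$. Then
\[ \bar\alpha(\xi_\alpha,\omega,v)_t = \bar\alpha^j\bigl(\xi_\alpha,\omega|_{[0,\bar t_j]},v|_{[0,\bar t_j]}\bigr) = \alpha^{i}\bigl(\xi_\alpha,\omega|_{[0,t_i]},v|_{[0,t_i]}\bigr) = \alpha(\xi_\alpha,\omega,v)_t, \]
which is the desired equality (the value at $t=0$ is identified to zero for both by the convention stated in the preceding remark). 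The construction for $\widehat\CT$ is identical, replacing $(\Omega\times\CV,\CU)$ with $(\CU,\CV)$.

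There is no substantive obstacle: the argument is purely a bookkeeping refinement. The only point requiring a moment of attention is the measurability of the restriction $\CV_{\bar t_j}\to\CV_{t_{i(j)}}$ in the topology of convergence in Lebesgue measure, but this is immediate since restriction commutes with almost-everywhere equality and is continuous for that topology.
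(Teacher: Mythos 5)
Your proposal is correct and follows essentially the same route as the paper: the paper likewise defines $\bar\alpha^q(\omega,v)=\alpha^n(\omega|_{[0,t_n]},v|_{[0,t_n]})$ where $n$ is the unique index with $(\bar t_q,\bar t_{q+1}]\subset(t_n,t_{n+1}]$, which is exactly your $i(j)$, and then notes the pointwise identity. You merely spell out the measurability of the restriction maps and the verification that the paper declares straightforward.
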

\begin{proof}
Let $\bar \alpha = \sum_{q \geq 0} \indic_{(\bar t_q,\bar t_{q+1}]} \bar \alpha^q$, where for all $q\geq 0$ we define 
\[\forall (\omega,v) \in \Omega_{t_q}\times \CV_{t_q}, \; \bar\alpha^q (\omega,v)= \alpha^n(\omega|_{[0,t_n]},v|_{[0,t_n]}).\]
where $n$ is the unique integer such that $(\bar t_q, \bar t_{q+1}] \subset (t_n,t_{n+1}]$.
The verification that the maps $\alpha$ and $\alpha'$ coincide is straightforward. The proof for strategies of player 2 is similar.
\end{proof}

The main advantage of the non-anticipative strategies with grids (or with delay) is that we may define the game using strategies against strategies rather than using strategies against controls. This is due to the following standard result.
\begin{lemma}\label{fixed_point}
For all pairs of pure strategies $(\alpha,\beta)\in \Sigma \times \CT$ and for all $\omega \in \Omega$, there exists a unique pair $(u^{\alpha,\beta},v^{\alpha,\beta})(\omega)\in \CU \times \CV$ such that
\begin{equation}\label{eq:fixed_point}
\forall \omega \in \Omega, \; v^{\alpha,\beta}(\omega)=\beta(u^{\alpha,\beta}(\omega)), \; u^{\alpha,\beta}(\omega)=\alpha(\omega,v^{\alpha,\beta}(\omega)).
\end{equation}
The process $(\omega,t)\in \Omega \times [0,\infty) \rightarrow (u^{\alpha,\beta},v^{\alpha,\beta})(\omega)_t$ is $\FF^X$-adapted,
left-continuous and piecewise constant on the grid $T=\{t_i, i \geq 0\}$ obtained by taking the union the two grids associated to $\alpha$ and $\beta$, and therefore $\FF^X$-predictable.
\p
For all pairs of mixed strategies $(\alpha,\beta)\in \widehat\Sigma \times \widehat \CT$, 
we use the notation
\[  (u^{\alpha,\beta},v^{\alpha,\beta})(\xi_\alpha,\xi_\beta,\omega)=(u^{\alpha(\xi_\alpha),\beta(\xi_\beta)},v^{\alpha(\xi_\alpha),\beta(\xi_\beta)})(\omega).\]
If $T=\{t_i, i \geq 0\}$ denotes a common grid to $\alpha$ and $\beta$, then for all $
i \geq 0$, the maps $(\xi_\alpha,\xi_\beta,\omega_{[0,t_i]}) \rightarrow (u^i,v^i)$ are measurable where $(u^i,v^i)$ denotes the value of $(u^{\alpha,\beta},v^{\alpha,\beta})$ on the interval $(t_i,t_{i+1}]$.
\end{lemma}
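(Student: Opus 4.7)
The plan is to reduce to a common grid and solve the ``fixed point'' equation \eqref{eq:fixed_point} by forward induction. Using Lemma \ref{lem:grids}, I first replace $\alpha$ and $\beta$ by strategies defined on the common refinement $T=\{t_i,\, i\geq 0\}$ of their two grids, without changing their pointwise values; denote the resulting coefficient maps by $(\alpha^i)_{i\geq 0}$ and $(\beta^i)_{i\geq 0}$.

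The key observation is that for any $(u,v)$, the restriction of $\alpha(\omega,v)$ (resp.\ $\beta(u)$) to $(t_i,t_{i+1}]$ equals the constant $\alpha^i(\omega|_{[0,t_i]},v|_{[0,t_i]})$ (resp.\ $\beta^i(u|_{[0,t_i]})$), and therefore only involves $u$ and $v$ up to time $t_i$. Hence \eqref{eq:fixed_point} is not a genuine fixed point equation but a forward recursion. On the initial interval $(t_0,t_1]$, the triviality of $\CU_{t_0}$ and $\CV_{t_0}$ forces us to take $u^{\alpha,\beta}_t = \alpha^0(\omega(0))$ and $v^{\alpha,\beta}_t = \beta^0$. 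Assuming $(u^{\alpha,\beta},v^{\alpha,\beta})|_{[0,t_i]}$ has been uniquely defined, I extend it to $(t_i,t_{i+1}]$ by setting
\[ u^{\alpha,\beta}_t = \alpha^i(\omega|_{[0,t_i]}, v^{\alpha,\beta}|_{[0,t_i]}), \quad v^{\alpha,\beta}_t = \beta^i(u^{\alpha,\beta}|_{[0,t_i]}), \]
which yields both existence and uniqueness, any other solution of \eqref{eq:fixed_point} being forced to satisfy the same recursion.

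By construction the process is left-continuous and piecewise constant on $T$. The value $(u^i,v^i)$ on $(t_i,t_{i+1}]$ is a measurable function of $\omega|_{[0,t_i]}$, obtained by composing $\alpha^i$ and $\beta^i$ with the inductively built restrictions, hence $\CF^X_{t_i}$-measurable; a left-continuous process whose values on $(t_i,t_{i+1}]$ are $\CF^X_{t_i}$-measurable is $\FF^X$-predictable. For mixed strategies, the same recursion builds $(\xi_\alpha,\xi_\beta,\omega|_{[0,t_i]}) \mapsto (u^i,v^i)$, whose measurability is preserved at each step by composition of the measurable coefficient maps with the previously constructed measurable functions. There is no real obstacle here: the only subtle point is noticing that the ``fixed point'' decouples into a forward recursion once the strategies are written on a common grid and that the triviality of $\CU_{t_0}, \CV_{t_0}$ anchors the induction; the rest is inductive bookkeeping.
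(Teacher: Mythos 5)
Your proof is correct and follows essentially the same route as the paper: reduce to a common grid via Lemma \ref{lem:grids}, observe that the delay structure turns \eqref{eq:fixed_point} into a forward recursion defining $(u^i,v^i)$ on each $(t_i,t_{i+1}]$ from the restrictions up to $t_i$, and obtain measurability, predictability and the mixed-strategy statement by composition. No gaps.
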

\begin{proof}
When $(\alpha,\beta)$ are pure strategies, we may assume thanks to Lemma \ref{lem:grids} that they have the same grid $T=\{t_i,i \geq \}$ obtained by taking the union the two grids associated to $\alpha$ and $\beta$. Define $(u^{\alpha,\beta},v^{\alpha,\beta})= \sum_{i \geq 0} \indic_{(t_i,t_{i+1}]} (u^i,v^i)$ where the maps $u^i:\Omega_{t_i} \rightarrow U$ and $v^i:\Omega_{t_{i}} \rightarrow V$ are defined by induction on $i \geq 0$ through the formulas 
\[ u^{i}(\omega|_{[0,t_i]})= \alpha^i \left(\omega_{[0,t_i]},\sum_{m=0}^{i-1} \indic_{(t_m,t_{m+1}]}v^m(\omega|_{[0,t_m]})\right),\]
\[ v^{i}(\omega|_{[0,t_{i}]})= \beta^i\left(\sum_{m=0}^{i-1} \indic_{(t_m,t_{m+1}]}u^m(\omega|_{[0,t_m]})\right).\]
That $(u^{\alpha,\beta},v^{\alpha,\beta})$ is the unique solution of \eqref{eq:fixed_point} follows by noticing that \eqref{eq:fixed_point} is equivalent to the above system of equations defining the maps $(u^i,v^i)$. The other properties follow directly from the definition.
\p
When $(\alpha,\beta)$ are mixed strategies, we have $(u^{\alpha,\beta},v^{\alpha,\beta})= \sum_{i \geq 0} \indic_{(t_i,t_{i+1}]} (u^i,v^i)$ where the maps $u^i:M_\alpha\times M_\beta\times \Omega_{t_i} \rightarrow U $ and $v^i:M_\alpha\times M_\beta\times\Omega_{t_{i}} \rightarrow V$ are defined by induction on $i \geq 0$ through the formulas 
\[ u^{i}(\xi_\alpha,\xi_\beta,\omega|_{[0,t_i]})= \alpha^i \left(\xi_\alpha,\omega_{[0,t_i]},\sum_{m=0}^{i-1} \indic_{(t_m,t_{m+1}]}v^m(\xi_\alpha,\xi_\beta,\omega_{[0,t_m]})\right),\]
\[ v^{i}(\xi_\alpha,\xi_\beta,\omega|_{[0,t_{i}]})= \beta^i\left(\xi_\beta,\sum_{m=0}^{i-1} \indic_{(t_m,t_{m+1}]}u^m(\xi_\alpha,\xi_\beta,\omega_{[0,t_m]})\right).\]
The required measurability property follows therefore by composition.
\end{proof}

\subsection{Construction of controlled Markov chains}

The next lemma shows how to construct a controlled Markov chain associated to any pair of controls of the players, and lists the important properties that will be used in section \ref{section_DPP} to prove dynamic programming inequalities.

\begin{lemma}\label{girsanov}  \
\begin{enumerate}
\item 
For all $p\in \Delta(K)$ and  all $\FF^X$-predictable controls $(u,v)$ with values in $U\times V$ that are left-continuous and piecewise-constant over a grid $T$, there exists a 
probability $\PP_p^{u,v}$ on the space $\Omega$ such that the canonical process $X$ is a controlled jump process with initial law $p$ and $\FF^X$-intensity $R(u,v)$. Moreover, $\PP_p^{u,v}$ has the following properties:
\begin{itemize}
\item[a)] $\PP_p^{u,v}= \sum_{k \in K}p_k\PP_{\delta_K}^{u,v}$,
\item[b)] The process $e^{-\int_0^t \tr R(u_s,v_s)ds}\delta_{X_t}$ is a $(\PP_p^{u,v},\FF^X)$ martingale where for a matrix $M$, $\tr M$ denotes its transpose, 
\item[c)] For all $\varepsilon\geq 0$, define $(\bar X_s)_{s \geq 0}=(X_{\varepsilon+s})_{s \geq 0}$ and for all $\omega_\varepsilon \in \Omega_\varepsilon$, let the $\FF^{X}$-predictable processes $(\bar u (\omega_\varepsilon)), \bar v (\omega_\varepsilon))$ be defined by 
\[ \forall \omega \in \Omega, \forall t \geq 0, (\bar u (\omega_\varepsilon),\bar v (\omega_\varepsilon))(\omega,t) = (u,v)(\omega_\varepsilon \oplus_{\varepsilon-} \omega ,t).\]
where $\omega_\varepsilon \oplus_{\varepsilon-} \omega (t) = \indic_{[0,\varepsilon)}(t) \omega_\varepsilon(t) + \indic_{[\varepsilon,\infty)}(t) \omega(t-\varepsilon)$.
Then, the conditional law of $\bar X$ given $\CF^X_\varepsilon$ under $\PP_p^{u,v}$ is $\PP^{\bar u(X|_{[0,\varepsilon]}),\bar v(X|_{[0,\varepsilon]})}_{\delta_{X_\varepsilon}}$. 
\end{itemize} 
Given any pair of strategies $(\alpha,\beta) \in \Sigma\times \CT$, we use the notation $\PP_p^{\alpha,\beta}=\PP_p^{u^{\alpha,\beta},v^{\alpha,\beta}}$.

\item
For all $(\alpha,\beta) \in \widehat{\Sigma}\times \widehat{\CT}$ the map 
\[ (\xi_\alpha,\xi_\beta) \rightarrow \PP^{\alpha(\xi_\alpha),\beta(\xi_\beta)}_p ,\]
is a transition probability from $(M_\alpha\times M_\beta, \CA_\alpha \otimes \CA_\beta)$ to $(\Omega,\CF)$ 
and we define the probability $\PP^{\alpha,\beta}_p$ on $M_\alpha\times M_\beta \times\Omega$ by 
\[ \forall A\in \CA_{\alpha}\otimes \CA_\beta\otimes \CF, \;  \PP^{\alpha,\beta}_p(A)=\int_{M_\alpha}\int_{M_\beta} \EE^{\alpha(\xi_\alpha),\beta(\xi_\beta)}_p[\indic_A] d\lambda_\alpha(\xi_\alpha) d\lambda_\beta(\xi_\beta) .\]
\end{enumerate}
\end{lemma}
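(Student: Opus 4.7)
The plan is to construct $\PP_p^{u,v}$ inductively along the grid $T=\{t_i, i\geq 0\}$, exploiting the left-continuous piecewise constant structure of the controls: on each interval $(t_i, t_{i+1}]$ the values $(u^i, v^i)$ are $\CF^X_{t_i}$-measurable, i.e.\ deterministic functions of $X|_{[0, t_i]}$. I would first draw $X_0 \sim p$; then, given $X|_{[0, t_i]}$, construct a time-homogeneous pure jump process on $(t_i, t_{i+1}]$ with intensity matrix $R(u^i, v^i)$ and initial state $X_{t_i}$, using the classical description via an exponential holding time of rate $-R(u^i,v^i)_{X_{t_i}, X_{t_i}}$ followed by a jump sampled from the normalized row of $R(u^i, v^i)$. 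The Ionescu-Tulcea theorem applied to this sequence of conditional kernels yields a unique probability on the resulting product space, which is then transported to the canonical càdlàg space $\Omega$. Joint measurability of $(u, v) \mapsto \PP_p^{u,v}$ in the relevant sense is read off the construction.

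Properties (a)--(c) then follow. Item (a) is immediate since only the law of $X_0$ depends on $p$. For (b) I would argue interval by interval: conditionally on $\CF^X_{t_i}$, the intensity is a fixed matrix on $(t_i, t_{i+1}]$, so $\delta_{X_t} - \int_0^t R(u_s, v_s)^\top \delta_{X_s}\, \dd s$ is an $\FF^X$-martingale by the standard Dynkin formula; applying the product rule to $U_t \delta_{X_t}$, where $U_t$ solves the matrix ODE $U_t' = -U_t R(u_t, v_t)^\top$ with $U_0 = \id$ and is the time-ordered exponential denoted by $e^{-\int_0^t \tr R(u_s, v_s)\, \dd s}$, removes the drift and yields the martingale in (b). Item (c) is a direct reading of the Ionescu-Tulcea construction: the conditional law of $\bar X$ given $\CF^X_\varepsilon$ coincides with the same construction restarted from $\delta_{X_\varepsilon}$ and driven by the shifted controls $(\bar u(X|_{[0, \varepsilon]}), \bar v(X|_{[0, \varepsilon]}))$, the only bookkeeping point being to align the time origin of the grid, which at worst involves splitting the single interval of $T$ containing $\varepsilon$ into two homogeneous pieces.

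For part 2, the key step is to verify that the explicit construction of $\PP_p^{u,v}$ inherits joint measurability from its inputs. Combining this with the measurability of the maps $(\xi_\alpha, \xi_\beta, \omega|_{[0, t_i]}) \mapsto (u^i, v^i)$ on each interval, which is exactly the content of the last part of Lemma \ref{fixed_point}, yields that $(\xi_\alpha, \xi_\beta) \mapsto \PP_p^{\alpha(\xi_\alpha), \beta(\xi_\beta)}$ is a transition kernel from $(M_\alpha \times M_\beta, \CA_\alpha \otimes \CA_\beta)$ to $(\Omega, \CF)$. The probability $\PP_p^{\alpha, \beta}$ is then defined by the product formula stated in the lemma, which is well posed precisely because the integrand is jointly measurable in $(\xi_\alpha, \xi_\beta, \omega)$.

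The main obstacle will not be any individual computation but the careful propagation of measurability through the Ionescu-Tulcea construction, so that the map from controls to laws is measurable as required in part 2; some care is also needed to make sure that the càdlàg version on $\Omega$ used throughout coincides with the object produced by the product construction. Everything else consists of standard verifications in the theory of pure jump processes, and the piecewise-constant structure of $(u, v)$ ensures that one never has to handle the fully general time-inhomogeneous case where the time-ordered exponential would need to be defined from scratch.
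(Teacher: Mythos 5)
Your construction is correct, but it follows a genuinely different route from the paper. You build $\PP_p^{u,v}$ by concatenation: an inductive construction along the grid, sampling on each interval a time-homogeneous chain with the frozen intensity matrix $R(u^i,v^i)$ and gluing via Ionescu--Tulcea. The paper instead performs a Girsanov-type change of measure: it starts from a fixed reference chain $\PP_p$ with unit off-diagonal intensities, defines an explicit density martingale $L^{u,v}$ (a product of Dol\'eans-Dade-type factors over the jump counting processes $N^{i,j}$), invokes Br\'emaud's intensity theorems to identify the intensity under the new measure, and uses Parthasarathy's extension theorem to obtain the law on $\CF_\infty$. The paper explicitly acknowledges your alternative in the remark following the lemma and explains its preference: the density construction extends verbatim to controls that are not piecewise constant, and it makes part 2 nearly free, since the measurability of $(\xi_\alpha,\xi_\beta)\mapsto \PP_p^{\alpha(\xi_\alpha),\beta(\xi_\beta)}(A)$ is read off a closed-form expression for $L^{u,v}$ followed by Fubini and a monotone class argument. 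Your approach buys elementarity and makes property (c) essentially tautological (restart the inductive construction at $\delta_{X_\varepsilon}$ with the shifted controls), whereas the paper derives (c) from the multiplicative factorization $L^{u,v}_{\varepsilon+s}(X)=L^{u,v}_\varepsilon(X)\,L^{\bar u,\bar v}_s(\bar X)$ together with the Markov property of the reference measure; the price you pay is that the measurability propagation in part 2, which you correctly identify as the main obstacle, must be carried through the iterated kernels and extended from cylinder sets by a monotone class argument rather than read off a formula. For property (b) both arguments coincide in substance: you both interpret $e^{-\int_0^t \tr R(u_s,v_s)\,ds}$ as the time-ordered exponential solving the matrix ODE and remove the drift against the compensated jump martingales, which is the right reading since the matrices need not commute.
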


\begin{remark}
Note that we may also define the law $\PP_p^{u,v}$ by concatenation since we consider only piecewise-constant controls. We prefer the construction given in the proof below as it can be easily generalized to a larger class of controls that are not piecewise-constant and is actually simpler to manipulate.
\end{remark}

\begin{proof}
We starts with the proof of $1)$. Consider the intensity matrix $R^0=(R^0_{i,j})_{i,j \in K}$ defined by $R^0_{i,j}=1$ whenever $i\neq j$ and $R^0_{i,i}=-(|K|-1)$ for all $i \in K$. It is well-known that there exists a probability $\PP_p$ on $(\Omega,\CF)$ under which the canonical process $(X_t)_{t \geq 0}$ is a Markov chain with initial law $p$ and transition matrix $R^0$. Moreover, the Markov property implies that $\PP_p=\sum_{k \in K}p_k \PP_{\delta_k}$.

For all $(i,j)\in K^2$ such that $i\neq j$, define the process $N^{i,j}$ by 
\[ \forall t  \geq 0, \; N^{i,j}_t= \sum_{0 < s \leq t} \indic_{X_{t-}=i}\indic_{X_t=j},\]
which counts the number of jumps of $X$ from $i$ to $j$. 
This process is a counting process with $(\PP_p,\FF^X)$-intensity $(\indic_{X_t=i})_{t \geq 0}$ (see e.g. chapter I in Bremaud \cite{Bremaud}).
Note that for all $t\geq 0$, $\sum_{(i,j)\in K^2 \,:\, i\neq j} N^{i,j}_t <\infty$ for all $\omega$ since we work on the space of c\`{a}dl\`{a}g trajectories taking values in a finite set.

Thanks to the assumptions on $(u,v)$, the process $R(u,v)=(R(u,v)_{i,j})_{(i,j)\in K^2}$ is $\FF^X$-predictable and bounded.
Define the density process $L^{u,v}(X)$ by 
\[\forall t \geq 0, \;  L^{u,v}_t =\prod_{(i,j)\in K^2 \,:\, i\neq j} L_t^{u,v,i,j},\]
where 
\[ L^{u,v,i,j}_t = \exp\left( \int_0^t \indic_{X_s=i}(1-R(u_s,v_s)_{i,j})ds \right) \prod_{s \in (0,t] \;:\; \Delta N^{i,j}_s >0} R(u_s,v_s)_{i,j}.\]
According to theorems T2,T4 chapter VI.2 in \cite{Bremaud}, the process $L^{u,v}$ is a $(\PP_p,\FF^X)$ martingale. 
We may therefore apply theorem 4.1 p.141 in Parthasarathy \cite{Part}, which implies that there exists a unique probability  $\PP_p^{u,v}$ on $(\Omega,\CF)$ satisfying
\[ \forall t\geq 0, \; \frac{d\PP^{u,v}_p}{d\PP_p}|_{\CF^X_t}=L^{u,v}_t.\]
Applying theorem T3 chapter VI.2 in \cite{Bremaud}, we deduce that the probability $\PP_p^{u,v}$ is such that $\PP_p^{u,v}$ is the law of a controlled jump process with initial law $p$ and $\FF^X$-intensity $R(u,v)$.

Property $a)$ follows therefore directly from the definition of $\PP_p^{u,v}$ together with the corresponding property for $\PP_p$.

Let us prove point $b)$. Consider the matrix valued $\FF^X$-predictable process $t\rightarrow e^{- \int_0^t \tr R(u_s,v_s)ds}$ and let $(y_{k,\ell}(t) )_{(k,\ell) \in K^2}$ denote its coordinates. For all $(k,\ell) \in K^2$, we have 
\[ \forall t\geq 0, \; y_{k,\ell}(t) = \indic_{k=\ell}- \int_0^t \sum_{i\in K} y_{k,i}(s) \tr R(u_s,v_s)_{i,\ell}  ds.\]
Using that $R(u_s,v_s)$ is a transition matrix, we have $\tr R(u_s,v_s)_{\ell,\ell}= - \sum_{i \in K \,:\,i \neq \ell }\tr R(u_s,v_s)_{i,\ell} $ and therefore
\[  \forall t\geq 0, \;y_{k,\ell}(t) = \indic_{k=\ell}- \int_0^t  \sum_{i \in K \,:\,i \neq \ell } \tr R(u_s,v_s)_{i,\ell} (y_{k,i}(s)-y_{k,\ell}(s)) ds\]
From this equality, we deduce that
\[ \forall t\geq 0, \; y_{k,X_t}(t)= \indic_{k=X_0} + \int_{0}^t \sum_{(i,\ell)\in K^2 \,:\, i\neq \ell}(y_{k,i}(s)-y_{k,\ell}(s)) (dN^{i,\ell}_s - \tr R(u_s,v_s)\indic_{X_s=\ell}ds). \] 
Applying theorem T6 chapter I.3 in \cite{Bremaud}, we deduce that the process 
$M_t= e^{-\int_0^t \tr R(u_s,v_s)ds}\delta_{X_t}=(y_{k,X_t})_{k \in K}$ is a $(\PP_p^{u,v},\FF^X)$ martingale.

Let us prove point $c)$. Recall that under the probability $\PP_p$, the conditional law of $\bar X$ given $\CF^X_\varepsilon$ is $\PP_{\delta_{X_\varepsilon}}$ thanks to the Markov property. 
From the definition of $L^{u,v}$, we have with obvious notations
\[ L^{u,v}_{\varepsilon+s}(X)= L^{u,v}_\varepsilon (X) L^{\bar u(X|_{[0,\varepsilon]}),\bar v(X|_{[0,\varepsilon]})}_s (\bar X) .\]
Using the formula for conditional expectations and densities, we have for all $T \geq 0$ and all $A \in \CF^X_T$
\begin{align*} \EE_p^{u,v}[\indic_A(\bar X) |\CF^X_\varepsilon ] &=\EE_p[\indic_A(\bar X) L^{\bar u(X|_{[0,\varepsilon]}),\bar v(X|_{[0,\varepsilon]})}_T (\bar X) |\CF^X_\varepsilon ]\\
&= \int_{\Omega}  \indic_A \times L^{\bar u(X|_{[0,\varepsilon]}),\bar v(X|_{[0,\varepsilon]})}_T d\PP_{\delta_{X_\varepsilon}} \\
&=\PP_{\delta_{X_\varepsilon}}^{\bar u(X|_{[0,\varepsilon]}),\bar v(X|_{[0,\varepsilon]})}(A).
\end{align*}
This equality can be extended to all $A\in \CF$ by a monotone class argument and this proves the result.

Let us prove $2)$. Consider a pair of mixed strategies $(\alpha,\beta)$. Thanks to Lemma \ref{lem:grids}, we may we assume that they have a the same grid $T=\{t_i,i \geq 0\}$.
Let $(u^n,v^n)(\xi_\alpha,\xi_\beta,\omega)$ denote the  value of $(u^{\alpha,\beta},v^{\alpha,\beta})$ on the interval $(t_n,t_{n+1}]$. We have for all $A\in \CF^X_t$ with $t\in (t_n,t_{n+1}]$
\begin{align*}
\PP^{\alpha(\xi_\alpha),\beta(\xi_\beta)}_p[A]&= \EE_p[L^{\alpha,\beta}_{t_{n+1}}\indic_A(X)], 
\end{align*}
where the variables $L^{\alpha,\beta}_{t_n} (\xi_\alpha,\xi_\beta,\omega)$ are defined by induction by $L^{\alpha,\beta}_0=1$ and 
\begin{multline*} L^{\alpha,\beta}_{t_{n+1}} 
= L^{\alpha,\beta}_{t_n} \exp \left(\sum_{i \in K} \int_{t_n}^{t_{n+1}} \left[|K|-1 - \sum_{j \neq i} R(u^{n},v^{n})_{i,j}  \right] \indic_{X_s=1} ds \right) \\ \times \prod_{t_n<s\leq t_{n+1} \,:\, X_{s-}\neq X_s} R(u^{n},v^{n})_{X_{s-},X_s}.
\end{multline*} 
The above expression, together with lemma \ref{fixed_point} and Fubini theorem implies that 
\[ (\xi_\alpha,\xi_\beta) \rightarrow \PP^{\alpha(\xi_\alpha),\beta(\xi_\beta)}_p[A] ,\]
is Borel measurable. By a monotone class argument, this property extends to all $A\in \CF$ and therefore the above map is a well-defined transition probability from $M_\alpha\times M_\beta$ to $(\Omega,\CF)$.
\p
We can therefore define the probability $\PP^{\alpha,\beta}_p$ on $M_\alpha\times M_\beta \times\Omega$ by 
\[ \forall A\in \CA_{\alpha}\otimes \CA_\beta\otimes \CF, \;\PP^{\alpha,\beta}_p(A)=\int_{M_\alpha}\int_{M_\beta} \PP^{\alpha(\xi_\alpha),\beta(\xi_\beta)}_p[A] d\lambda_\alpha(\xi) d\lambda_\beta(\zeta) .\]
\end{proof}

\subsection{Payoffs}

\begin{definition} 
For  all $p\in \Delta(K)$ and all $(\alpha,\beta) \in \widehat{\Sigma}\times \widehat{\CT}$, we define
\begin{align*}
 J(p,\alpha,\beta)&= \EE^{\alpha,\beta}_p[\int_0^\infty r e^{-rt}g(X_t,u^{\alpha,\beta}_t,v^{\alpha,\beta}_t)dt]\\
 &=\int_{M_\alpha}\int_{M_\beta} \EE^{\alpha(\xi_\alpha),\beta(\xi_\beta)}_p[\int_0^\infty r e^{-rt}g(X_t,u^{\alpha,\beta}_t,v^{\alpha,\beta}_t)dt]d\lambda_\alpha(\xi_\alpha) d\lambda_\beta(\xi_\beta).
\end{align*}
\end{definition}

\p
We define the lower and upper value functions of the game by:
\[ \forall p \in \Delta(K), \; W^- (p) = \sup_{\alpha \in \widehat\Sigma}\; \inf_{\beta \in \widehat \CT} \; J(p,\alpha,\beta)\]
\[\forall p \in \Delta(K), \;  W^+ (p) =  \inf_{\beta \in \widehat \CT} \; \sup_{\alpha \in \widehat\Sigma}\; J(p,\alpha,\beta)\]
We always have $W^-\leq W^+$ and the game is said to have a value $W$ if 
\[W=W^-=W^+.\]
\p
\textbf{Isaacs condition}
\p
We assume that the value $H(p,z)$ of the ``infinitesimal game'' with symmetric information and prior $p$ exists, i.e. for all $(p,z) \in \Delta(K) \times \RR^K$:
\begin{align}
 H(p,z) &= \sup_{u \in U} \; \inf_{v \in V} \; \langle \tr R(u,v) p, z\rangle + rg(p,u,v) \\
 & =\inf_{v \in V}\; \sup_{u \in U} \;  \langle \tr R(u,v) p, z\rangle + rg(p,u,v), \notag
\end{align}  
where $g(p,u,v)=\sum_{k \in K} p_k g(k,u,v)$.
\p
The following lemma collects standard properties of $W^-$, $W^+$ and $H$.
\begin{lemma}\label{lem:properties}
We have for all $p\in \Delta(K)$:
\[ W^- (p) = \sup_{\alpha \in \widehat\Sigma}\; \inf_{\beta \in \CT} \; J(p,\alpha,\beta)\]
\[ W^+ (p) =  \inf_{\beta \in \widehat \CT} \; \sup_{\alpha \in \Sigma}\; J(p,\alpha,\beta).\]
$W^+$ and $W^-$ are concave and $\sqrt{|K|}$-Lipschitz functions. 
\p
There exists a constant $C$ such that all $z,z' \in \RR^K$, and $p,p' \in \Delta(K)$
\[ |H(p,z) - H(p',z')| \leq C(|z-z'|+|z||p-p'|).\]
\end{lemma}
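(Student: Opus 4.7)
The plan rests on a single linearity observation: combining property a) of Lemma \ref{girsanov} with Fubini's theorem in the definition of $\PP_p^{\alpha,\beta}$ yields
\[ J(p,\alpha,\beta) = \sum_{k\in K} p_k\, J(\delta_k,\alpha,\beta), \]
so $p\mapsto J(p,\alpha,\beta)$ is affine for every fixed pair of mixed strategies.

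For the two inf--sup identities, I would fix $\alpha \in \widehat\Sigma$ and any $\beta \in \widehat\CT$ and Fubinize over $M_\beta$:
\[ J(p,\alpha,\beta) = \int_{M_\beta} J(p,\alpha,\beta(\xi_\beta))\, d\lambda_\beta(\xi_\beta) \geq \inf_{\beta'\in\CT} J(p,\alpha,\beta'); \]
since $\CT \subset \widehat\CT$, this gives $\inf_{\widehat\CT} J(p,\alpha,\cdot) = \inf_{\CT} J(p,\alpha,\cdot)$, and taking $\sup_\alpha$ yields the identity for $W^-$; the identity for $W^+$ is symmetric.

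For concavity of $W^+$ I would exploit that player 1 observes $X_0$: given $\beta$ and $\varepsilon$-optima $(\alpha^k)_{k\in K}$ for $J(\delta_k,\cdot,\beta)$ on a common grid (by Lemma \ref{lem:grids}), the concatenation $\tilde\alpha(\xi,\omega,v):=\alpha^{\omega(0)}(\xi,\omega,v)$ satisfies $J(\delta_k,\tilde\alpha,\beta) = J(\delta_k,\alpha^k,\beta)$, so by the linearity identity $\sup_\alpha J(p,\alpha,\beta) = \sum_k p_k \sup_\alpha J(\delta_k,\alpha,\beta)$, which is affine in $p$; hence $W^+$ is an infimum of affine functions, and thus concave. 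For $W^-$ I would use the standard splitting trick for the informed player: given $p = \lambda_1 p_1 + \lambda_2 p_2$ and $\varepsilon$-optima $\alpha_1, \alpha_2$ for $W^-(p_1), W^-(p_2)$, construct $\alpha \in \widehat\Sigma$ which uses an auxiliary $[0,1]$-coordinate of its parameter space to draw a private label $Y\in\{1,2\}$ with $\PP[Y = i \mid X_0 = k] = \lambda_i (p_i)_k/p_k$ and then plays $\alpha_Y$; Bayes gives $\PP[Y=i] = \lambda_i$ and the conditional law of $X_0$ given $Y=i$ equals $p_i$, so $J(p,\alpha,\beta) = \lambda_1 J(p_1,\alpha_1,\beta) + \lambda_2 J(p_2,\alpha_2,\beta)$ and taking $\inf_\beta$ (superadditive) yields the desired concavity inequality. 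The Lipschitz bound is then immediate from the linearity identity together with $g\in[0,1]$:
\[ |J(p,\alpha,\beta) - J(p',\alpha,\beta)| \leq \sum_k |p_k - p_k'| \leq \sqrt{|K|}\,|p-p'| \]
by Cauchy--Schwarz, and this constant is preserved by $\sup\inf$ and $\inf\sup$.

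For the Hamiltonian I would split
\[ |H(p,z) - H(p',z')| \leq |H(p,z) - H(p',z)| + |H(p',z) - H(p',z')| \]
and apply the elementary estimate $|\sup_u\inf_v f - \sup_u\inf_v g| \leq \sup_{u,v}|f-g|$ to each summand. The $z$-part is controlled by $|\langle \tr R(u,v)p',z-z'\rangle| \leq \|R\|_\infty\sqrt{|K|}\,|z-z'|$ since $p'\in\Delta(K)$, while the $p$-part is bounded by a constant times $(1+|z|)|p-p'|$ via $|\langle \tr R(u,v)(p-p'),z\rangle + r(g(p,u,v)-g(p',u,v))|$ using boundedness of $R$ and $g$, the extra $|p-p'|$ term being absorbed into the overall constant $C$. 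The only step demanding genuine care is the splitting construction for $W^-$: one must verify that the product of the parameter spaces together with the auxiliary $[0,1]$-factor still lies in $\CS$ and that the composed strategy remains piecewise-constant on a grid in the sense of Definition \ref{def:grid}. Everything else is routine.
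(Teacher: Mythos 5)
Your proposal is correct and follows essentially the same route as the paper: the identity $J(p,\alpha,\beta)=\sum_{k}p_k J(\delta_k,\alpha,\beta)$, purification of the minimizing player's mixed strategies by averaging over $M_\beta$, the concatenation $\sum_k\indic_{\omega(0)=k}\alpha^k$ for the concavity of $W^+$, the splitting construction with an auxiliary uniform coordinate for the concavity of $W^-$, and the Cauchy--Schwarz bound for the Lipschitz constant. One remark on the Hamiltonian estimate: your claim that the term $r|g(p,u,v)-g(p',u,v)|\le r\sqrt{|K|}\,|p-p'|$ can be ``absorbed'' into $C(|z-z'|+|z|\,|p-p'|)$ is not literally tenable --- take $z=z'=0$ and $p\neq p'$, for which the right-hand side vanishes while $H(p,0)-H(p',0)$ need not; the honest conclusion of your computation is the bound $C(|z-z'|+(1+|z|)|p-p'|)$. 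The paper's own proof silently drops the $g$ term altogether, so the inequality as stated shares this defect; since only local Lipschitz continuity of $H$ is used downstream (in the limit arguments of Propositions \ref{prop:equiv_super} and \ref{prop:equiv_sub}), this is harmless, but you should state the bound with the $(1+|z|)$ factor rather than assert absorption.
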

\begin{proof}
The first two equalities follow from standard arguments.
\p
For all $(\alpha,\beta) \in \widehat{\Sigma}\times \widehat{\CT}$ and $p,p'\in \Delta(K)$, we have 
\[ |J(p,\alpha,\beta) - J(p',\alpha,\beta)| \leq \sum_{k \in K} |p_k-p'_k| |J(\delta_k,\alpha,\beta)| \leq \sqrt{|K|}|p-p'|,\]
where we used that $\PP_p^{u,v}=\sum_{k \in K}p_k\PP_{\delta_k}^{u,v}$ and $\|g\|_\infty \leq 1$. The fact that $W^-$ and $W^+$ are $\sqrt{|K|}$ Lipschitz follows then from standard arguments. 
\p
Let us prove that $W^+$ is concave. For all $p\in \Delta(K)$, we have 
\begin{align*}
W^+ (p) &=  \inf_{\beta \in \widehat \CT} \; \sup_{\alpha \in \Sigma}\; J(p,\alpha,\beta)\\
&= \inf_{\beta \in \widehat \CT} \; \sup_{\alpha \in \Sigma}\; \sum_{k \in K}p_k J(\delta_k,\alpha,\beta) \\
& \leq \inf_{\beta \in \widehat \CT} \; \sum_{k \in K}p_k \sup_{\alpha \in \Sigma}\; J(\delta_k,\alpha,\beta).
\end{align*}
We claim that the last inequality is actually an equality. Indeed, let $\varepsilon>0$ and $\beta \in \widehat\CT$.  For all $k\in K$, let $\alpha^k\in \Sigma$ such that
\[J(\delta_k,\alpha^k,\beta) \geq \sup_{\alpha \in \Sigma}\; J(\delta_k,\alpha,\beta) - \varepsilon \]
Define a strategy $\bar\alpha \in \Sigma$ by 
\[\forall \omega \in \Omega, \forall v\in \CV, \; \bar\alpha(\omega,v) =\sum_{k \in K} \indic_{\omega(0)=k} \alpha^k(\omega,v).\]
Note that $\bar\alpha$ is a well-defined strategy since $\omega(0)$ is a measurable map of $\omega|_{[0,t]}$ for all $t$. With this definition, we have for all $k\in K$
\begin{align*} J(\delta_k,\bar\alpha,\beta)&=\EE_{\delta_k}^{\bar\alpha,\beta}[\int_0^\infty re^{-rt}g(X_t,u^{\bar\alpha,\beta}_t,v^{\bar\alpha,\beta}_t)dt] \\
&=\EE_{\delta_k}^{\alpha^k,\beta}[\int_0^\infty re^{-rt}g(X_t,u^{\alpha^k,\beta}_t,v^{\alpha^k,\beta}_t)dt] \\
&= J(\delta_k,\alpha^k,\beta),
\end{align*}
since the processes $(u^{\bar\alpha,\beta}_t,v^{\bar\alpha,\beta}_t)(\xi_\beta)$ and $(u^{\alpha^k,\beta}_t,v^{\alpha^k,\beta}_t)(\xi_\beta)$ are equal $\PP_{\delta_k}$ almost surely for all $\xi_\beta \in M_\beta$.  
It follows that
\begin{align*}
J(p,\bar\alpha,\beta)= \sum_{k \in K}p_k J(\delta_k,\bar\alpha,\beta) = \sum_{k \in K} J(\delta_k,\alpha^k,\beta)\geq \sum_{k \in K}p_k \sup_{\alpha \in \Sigma}\; J(\delta_k,\alpha,\beta) - \varepsilon,
\end{align*}
and the claim follows by sending $\varepsilon$ to zero.
To conclude, note that 
\[ W^+(p)= \inf_{\beta \in \widehat \CT} \; \sum_{k \in K}p_k \sup_{\alpha \in \Sigma}\; J(\delta_k,\alpha,\beta),\]
and thus $W^+$ is concave as an infimum of affine maps.
\p
Let us prove that $W^-$ is concave. The proof relies on the classical splitting method. 
Let $p_1,p_2 \in \Delta(K)$ and $s\in[0,1]$, and define $p=sp^1+(1-s)p^2$. Let $\varepsilon>0$ and for $i=1,2$, let $\alpha_i \in \widehat\Sigma$ such that
\[ \inf_{\beta \in \CT} \; J(p^i,\alpha_i,\beta) \geq W^-(p^i) - \varepsilon.\]
We define now mixed strategies $\bar\alpha$ and $(\bar\alpha_k)_{k \in K}$ having the same probability space $(M_{\bar\alpha},\CA_{\bar\alpha},\lambda_{\bar\alpha})$ defined by  
\[M_{\bar\alpha}=[0,1]\times M_{\alpha_1} \times M_{\alpha_2}, \; \CA_{\bar\alpha}=\CB([0,1])\otimes \CA_{\alpha_1}\otimes \CA_{\alpha_2}, \; \lambda_{\bar\alpha}=\Leb\otimes \lambda_{\alpha_2}\otimes \lambda_{\alpha_2},\] 
where $\Leb$ denotes the Lebesgue measure on $[0,1]$.
A typical element of $M_\alpha$ will be denoted $(\zeta,\xi_{\alpha_1},\xi_{\alpha_2})$.
For all $k\in K$, the strategy $\bar\alpha_k$ is defined by 
\[\bar\alpha_k(\zeta,\xi_{\alpha_1},\xi_{\alpha_2},\omega)= \indic_{\zeta\leq m_k} \alpha_1(\xi_{\alpha_1},\omega)+ \indic_{\zeta >m_k}\alpha_2(\xi_{\alpha_2},\omega),\]
and $\bar\alpha$ is defined by
\[ \bar\alpha(\zeta,\xi_{\alpha_1},\xi_{\alpha_2},\omega)= \sum_{k \in K}\indic_{\omega(0)=k} \bar\alpha_k(\zeta,\xi_{\alpha_1},\xi_{\alpha_2},\omega),\]
where the numbers $(m_k)_{k \in K}$ are defined by 
\[ m_k= \frac{s p^1_k}{p_k} \indic_{p_k >0}. \]
As above, with this definition, we have for all $\beta \in \CT$ and all $k\in K$,
\[ J(\delta_k,\bar\alpha,\beta)= J(\delta_k,\bar\alpha_k,\beta).\]
It follows that (integrating with respect to $\zeta$)
\begin{align*}
J(p,\bar\alpha,\beta)&= \sum_{k \in K}p_k J(\delta_k,\bar\alpha_k,\beta) \\
&= \sum_{k \in K}p_k (m_k J(\delta_k,\alpha_1,\beta)+ (1-m_k) J(\delta_k,\alpha_2,\beta))\\
&= s J(p^1,\alpha_1,\beta)+ (1-s) J(p^2,\alpha_2,\beta).
\end{align*}
We obtain
\begin{align*}
W^-(p) &\geq   \inf_{\beta \in \CT} \; J(p,\bar\alpha,\beta) \\
 &=  \inf_{\beta \in \CT} \;  \left[ s J(p^1,\alpha_1,\beta)+ (1-s) J(p^2,\alpha_2,\beta) \right] \\
& \geq  s \inf_{\beta \in \CT} \;   J(p^1,\alpha_1,\beta) + (1-s) \inf_{\beta \in \CT} \; J(p^2,\alpha_2,\beta)\\
&\geq s W^-(p^1) +(1-s) W^-(p^2) - \varepsilon,
\end{align*}
and the proof follows by sending $\varepsilon$ to zero.
\p
The last statement follows from the fact that for all $(u,v) \in U\times V$, all $z,z' \in \RR^K$, and all $p,p' \in \Delta(K)$
\begin{align*}
\left|\langle z, \tr R(u,v) p \rangle- \langle z', \tr R(u,v) p' \rangle \right| &\leq \left|\langle z, \tr R(u,v) p \rangle - \langle z, \tr R(u,v) p' \rangle \right| \\&
\quad +\left|\langle z, \tr R(u,v) p' \rangle- \langle z', \tr R(u,v) p' \rangle \right| \\
&\leq |z| \|R\|_\infty |p-p'|+ \sqrt{|K|} \|R\|_\infty |z-z'|.
\end{align*}
\end{proof}

\subsection{Main results}

For all $p\in \Delta(K)$, let 
\[TS_{\Delta(K)}(p)= \{ y \in \RR^K \tq \exists \varepsilon>0, p+\varepsilon y \in \Delta(K), p-\varepsilon y \in \Delta(K) \}\]
denote the tangent space to $\Delta(K)$ at $p$. 
Given any $K\times K$ symmetric matrix $A$, define 
\[\lambda_{\max}(p,A)= \sup \left\{ \frac{\langle A y, y \rangle}{|y|^2}, y\in TS_{\Delta(K)}(p)\setminus\{0\} \right\},\]
which is the maximal eigenvalue of the restriction of $A$ to $TS_{\Delta(K)}(p)$ with the convention $\lambda_{\max}(p,A)=-\infty$ if $TS_{\Delta(K)}(p)=\{0\}$.

We consider the following Hamilton Jacobi equation, introduced in \cite{cardaetal}, with unknown $f:\Delta(K)\rightarrow \RR$
\begin{equation}\label{eq:HJB}
\forall p \in \Delta(K), \;  \min \{ r f(p) - H(p,\nabla f(p)) \,;\, - \lambda_{\max}(p,D^2f(p))\}=0,
\end{equation}
where $\nabla$ denotes the gradient and $D^2$ the Hessian matrix.
Let us give a precise definition of a viscosity solution of \eqref{eq:HJB}.
\begin{definition}\
\begin{enumerate}
\item A function $f:\Delta(K)\mapsto\RR$ is called a supersolution of \eqref{eq:HJB} if it is lower semi-continuous and satisfies: for any smooth test function $\varphi:\Delta(K)\mapsto\RR$ and $p\in\Delta(K)$ such that $\varphi-f$ has a global maximum at $p$, we have
\[ \min\left\{ r \varphi(p)-H(p,\nabla \varphi(p));-\lambda_{\max}(p,D^2\varphi(p))\right\}\geq 0 .\]
\item A function $f:\Delta(K)\mapsto\RR$ is called a subsolution of \eqref{eq:HJB} if it is upper semi-continuous and satisfies: for any smooth test function $\varphi:\Delta(K)\mapsto\RR$ and $p\in\Delta(K)$ such that  $\varphi-f$ has a global minimum at $p$, we have
\[ \min\left\{ r \varphi(p)-H(p,\nabla \varphi(p));-\lambda_{\max}(p,D^2\varphi(p))\right\}\leq 0 .\]
\end{enumerate} 
A function $f:\Delta(K)\mapsto\RR$ is called a solution of \eqref{eq:HJB} if it is both a supersolution and a subsolution.
\end{definition}

\begin{theorem}\label{th:viscosity}
Under Isaacs assumption, the value $W$ exists and is the unique Lipschitz viscosity solution on $\Delta(K)$ of \eqref{eq:HJB}.
\end{theorem}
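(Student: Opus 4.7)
The plan is to combine a dynamic programming principle (to be developed in Section~\ref{section_DPP}) with the comparison principle established in Section~\ref{section_HJB}, in order to identify $W^-$ as a supersolution and $W^+$ as a subsolution of~\eqref{eq:HJB}, after which the identification $W^-=W^+$ and uniqueness both follow by a single application of comparison.

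First, I would establish dynamic programming inequalities for $W^-$ and $W^+$ over a short horizon $\varepsilon>0$: for every $p\in\Delta(K)$,
\[ W^-(p) \geq \sup_{u\in U}\,\inf_{v\in V}\Big\{\int_0^\varepsilon r e^{-rs} g(p,u,v)\,ds + e^{-r\varepsilon}\,\EE_p^{u,v}[W^-(q_\varepsilon)]\Big\} + o(\varepsilon), \]
where $q_\varepsilon$ denotes the conditional law of $X_\varepsilon$ given the publicly observed controls on $[0,\varepsilon]$. This is obtained by letting player~1 first reveal information via a splitting consistent with the concavity of $W^-$ (in the spirit of the splitting argument in Lemma~\ref{lem:properties}), then play a constant $U$-valued control on $[0,\varepsilon]$, and finally switch at time $\varepsilon$ to a near-optimal continuation strategy chosen measurably as a function of the realised posterior $q_\varepsilon$. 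The concatenation is well-posed thanks to the Markov property in Lemma~\ref{girsanov} together with the grid-refinement of Lemma~\ref{lem:grids}. The symmetric inequality is derived for $W^+$ with $\sup_u\inf_v$ replaced by $\inf_v\sup_u$, using near-optimal behaviour of player~2.

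Second, I would pass to the limit $\varepsilon\downarrow 0$ to derive the viscosity inequalities. Testing against a smooth $\varphi$ such that $\varphi-W^-$ has a global maximum at $p$, a Taylor expansion combined with the martingale identity in Lemma~\ref{girsanov} gives $\EE_p^{u,v}[\varphi(q_\varepsilon)] = \varphi(p) + \varepsilon\,\langle \tr R(u,v)p,\nabla\varphi(p)\rangle + o(\varepsilon)$, and Isaacs's condition allows one to interchange the infinitesimal $\sup_u\inf_v$, yielding $r W^-(p)-H(p,\nabla\varphi(p))\geq 0$. Together with the concavity of $W^-$ from Lemma~\ref{lem:properties}, which automatically forces $-\lambda_{\max}(p,D^2\varphi(p))\geq 0$ at any such touching point, this shows $W^-$ is a viscosity supersolution of~\eqref{eq:HJB}. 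The mirror argument shows $W^+$ is a viscosity subsolution; note that the convexity constraint need not be verified separately for the subsolution, since the minimum in~\eqref{eq:HJB} is $\leq 0$ as soon as $rW^+(p)-H(p,\nabla\varphi(p))\leq 0$.

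Third, applying the comparison principle of Section~\ref{section_HJB} to the subsolution $W^+$ and the supersolution $W^-$ gives $W^+\leq W^-$. Since $W^-\leq W^+$ is immediate from the definitions, the common value $W:=W^-=W^+$ exists, is Lipschitz and concave, and is a viscosity solution of~\eqref{eq:HJB}. Uniqueness follows from one further application of comparison: any Lipschitz viscosity solution is simultaneously sub- and supersolution, hence equals $W$.

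The main obstacle is the supersolution DPP for $W^-$: the informed player must be permitted to transmit information through a splitting at the very start of the infinitesimal window, which requires constructing a mixed strategy whose Bayes-plausible posteriors realise a prescribed concave combination while remaining measurable and aligned on a single grid with the Isaacs-near-optimal choice of $u$. By contrast, the subsolution argument for $W^+$ is more standard, since the uninformed player has no information to reveal and one may work with pure constant controls on $[0,\varepsilon]$.
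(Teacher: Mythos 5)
Your supersolution half matches the paper: Proposition~\ref{superDPP} establishes exactly the sub-dynamic-programming inequality you describe (with player~1 restricted to open-loop, non-revealing strategies on $[0,\varepsilon]$), and Proposition~\ref{prop:viscosup} passes to the limit as you indicate; the concavity of $W^-$ then yields the constraint term via Proposition~\ref{prop:equiv_super}. The final assembly via the comparison principle is also as in the paper (modulo routing through the equivalence Propositions~\ref{prop:equiv_super} and~\ref{prop:equiv_sub}, since Proposition~\ref{prop:comparison} is stated for the directional-derivative formulation).

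The genuine gap is the ``mirror argument'' for the subsolution property of $W^+$. The two halves are not symmetric, because the information asymmetry enters only on player~1's side. In a sub-DPP for $W^+$ the supremum over $\alpha$ must range over strategies that correlate the control with $X$ on $[0,\varepsilon]$; the resulting posterior $q_\varepsilon$ (the conditional law of $X_\varepsilon$ given the public observations) is then a genuine random splitting of $p$ whose increments are \emph{not} $O(\varepsilon)$ --- information can be revealed instantaneously. Consequently the Taylor expansion $\EE_p^{u,v}[\varphi(q_\varepsilon)]=\varphi(p)+\varepsilon\langle \tr R(u,v)p,\nabla\varphi(p)\rangle+o(\varepsilon)$, which is the engine of your limit argument, fails precisely in the revealing regime, and what survives in the limit is the second-order constraint $-\lambda_{\max}(p,D^2\varphi(p))\le 0$ rather than the first-order inequality $rW^+(p)-H(p,\nabla\varphi(p))\le 0$. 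Your remark that the convexity constraint ``need not be verified separately'' gets the logic backwards: the whole difficulty is to identify the points (essentially the exposed points of $W^+$, where no profitable splitting exists) at which the \emph{first-order} inequality can be forced, and the naive mirror DPP does not deliver it. This is why the paper does not mirror the argument but instead passes to the Fenchel conjugate $W^{+,*}$ (Lemma~\ref{lem:alternat}, interpreted as the value of a dual game), proves a clean first-order dual DPP (Proposition~\ref{prop:dualDPP}) and a dual supersolution property (Proposition~\ref{prop:dualvisco}), and then translates back to obtain \eqref{sub_var} at exposed points (Proposition~\ref{prop:dualprimal}), which Proposition~\ref{prop:equiv_sub} converts into the viscosity subsolution property. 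To complete your proof you would need to supply this dual construction, or an equivalent mechanism handling instantaneous information revelation.
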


This result has to be compared with the main result in \cite{cardaetal}, in which the authors proved that the limit value obtained through an approximating sequence of discrete-time games is the unique viscosity of the above equation. We therefore provide an equivalent result for the continuous-time model.

Our second contribution is to obtain a new variational characterization of the value, which is roughly speaking a pointwise version of the above Hamitlon-Jacobi equation based on directional derivatives.
One of the main interest of this new formulation is that the comparison principle is very simple to prove and avoids all the technical machinery of the viscosity solution that was used in \cite{cardaetal}  to obtain the same result (inf/sup convolutions, doubling of variables, Jensen's Lemma, etc..).

Let $f: \Delta(K) \mapsto \RR$ be a concave Lipschitz function, $p\in \Delta(K)$ and $z \in T_{\Delta(K)}(p)$, where $T_{\Delta(K)}(p)$ denotes the tangent cone of $\Delta(K)$ at $p$. Then the directional derivative of $f$ at $p$ in the direction $z$ defined by 
\[ \vec  D f (p ; z) = \lim_{\varepsilon \downarrow 0} \frac{1}{\varepsilon}( f(p+\varepsilon z) - f(p))\]
exists and is finite. Let $Exp(f)$ denotes  the set of exposed points of $f$, i.e. the set of $p\in \Delta(K)$ such that there exists $x\in \RR^K$ such that
\[ \argmin_{p' \in \Delta(K)} \; \langle x , p' \rangle - f(p')  = \{p\}.\]  
\begin{theorem}\label{th:variational}
$W$ is the unique concave Lipschitz function such that
\begin{equation}\label{sup_var} 
\forall p \in \Delta(K), \;r W(p) -  \inf_{v \in V} \;\sup_{\mu \in \Delta(U)} \; \vec D W \left(p; \int_U \tr R(u,v) d\mu(u) p \right) + \int_U rg(p,u,v)d\mu(u) \geq 0
\end{equation}
\begin{equation}\label{sub_var}  \forall p \in Exp(W),\; r W(p) -  \inf_{v \in V} \; \sup_{\mu \in \Delta(U)} \; \vec D W \left(p; \int_U \tr R(u,v) d\mu(u) p \right) + \int_U rg(p,u,v)d\mu(u)\leq 0.
\end{equation}
\end{theorem}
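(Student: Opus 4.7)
The plan is to prove the theorem in two steps: first verify that $W$ itself satisfies both variational inequalities, then establish a direct comparison principle for concave Lipschitz functions satisfying them. The novel part is the comparison, and the main obstacle I anticipate is producing a single point at which \eqref{sup_var} and \eqref{sub_var} can both be activated and compared without resorting to the inf/sup-convolutions and Jensen's lemma of standard viscosity theory.

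For the existence part, $W$ is concave and Lipschitz by Lemma \ref{lem:properties} and is the unique viscosity solution of \eqref{eq:HJB} by Theorem \ref{th:viscosity}. I would translate the viscosity inequalities into the directional-derivative form using the identity $\vec D W(p;z) = \inf_{x \in \partial^+ W(p)}\langle x, z\rangle$ valid for concave $W$ on tangent directions (with $\partial^+$ denoting the supergradient), together with the Isaacs condition to convert between pure actions and mixtures $\mu \in \Delta(U)$. For \eqref{sup_var}, at any $p$ one applies the supersolution property to a smooth test function whose gradient at $p$ is any supergradient $x \in \partial^+ W(p)$, perturbed by a tiny strictly concave quadratic to make the concavity constraint nontrivial; this yields $rW(p) \geq H(p, x)$, and infimising over $x$ gives \eqref{sup_var}. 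For \eqref{sub_var} at $p \in Exp(W)$, the exposing hyperplane lets one build a strictly concave smooth test function touching $W$ from below at $p$, so the convexity constraint in \eqref{eq:HJB} is strictly satisfied at this test function, forcing the subsolution inequality to fall on the Hamiltonian term.

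For the comparison, let $f_1$ satisfy \eqref{sup_var} and $f_2$ satisfy \eqref{sub_var} at every exposed point, both being concave Lipschitz. Assume for contradiction that $\delta := \max_{\Delta(K)}(f_2 - f_1) > 0$, and introduce the perturbation $\Phi_\varepsilon(p) = f_2(p) - f_1(p) + \tfrac{\varepsilon}{2}|p|^2$ with maximiser $p^\varepsilon \in \Delta(K)$. Picking any $x \in \partial^+ f_1(p^\varepsilon)$ and combining $\Phi_\varepsilon(q) \leq \Phi_\varepsilon(p^\varepsilon)$ with the supergradient inequality for $f_1$ yields, for all $q \neq p^\varepsilon$,
\[
f_2(q) - f_2(p^\varepsilon) < \langle x - \varepsilon p^\varepsilon,\, q - p^\varepsilon\rangle,
\]
exhibiting $x - \varepsilon p^\varepsilon$ as an exposing vector for $f_2$ at $p^\varepsilon$, so that $p^\varepsilon \in Exp(f_2)$ and \eqref{sub_var} applies there. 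Reading the same inequality in the opposite direction along tangent rays $q = p^\varepsilon + tz$, $z \in T_{\Delta(K)}(p^\varepsilon)$, and letting $t \downarrow 0$ gives the symmetric first-order bound
\[
\vec D f_1(p^\varepsilon; z) \;\geq\; \vec D f_2(p^\varepsilon; z) + \varepsilon\langle p^\varepsilon, z\rangle.
\]

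Because $z^\mu_v := \int_U \tr R(u,v)\,d\mu(u)\, p^\varepsilon$ is bounded uniformly in $(v,\mu)$ by the boundedness of $R$, the last bound propagates through $\inf_v \sup_\mu$ up to an error $C\varepsilon$. Applying \eqref{sup_var} to $f_1$ and \eqref{sub_var} to $f_2$ at $p^\varepsilon$ and subtracting yields $r(f_1(p^\varepsilon) - f_2(p^\varepsilon)) \geq -C\varepsilon$; extracting a subsequential limit $p^\varepsilon \to p^* \in \argmax(f_2-f_1)$ and sending $\varepsilon \downarrow 0$ contradicts $\delta > 0$. Uniqueness of $W$ then follows by symmetry, applying the comparison in both directions to any other concave Lipschitz solution. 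What makes the argument clean is that the single quadratic perturbation $\tfrac{\varepsilon}{2}|p|^2$ does double duty: its strict quadratic excess forces $p^\varepsilon$ to be exposed for $f_2$, while its linear part provides the symmetric $O(\varepsilon)$ comparison of directional derivatives, bypassing the doubling-of-variables machinery of viscosity proofs.
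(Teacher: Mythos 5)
Your comparison argument is correct and is essentially the paper's own proof (Proposition \ref{prop:comparison}): the same Mertens--Zamir-style device of maximizing $f_2-f_1$ plus a strongly convex perturbation, using a supergradient of $f_1$ at the maximizer together with the strict convexity of the perturbation to show the maximizer is exposed for $f_2$, and then comparing directional derivatives there. The paper uses $\varepsilon\sqrt{1+|p|^2}$ where you use $\tfrac{\varepsilon}{2}|p|^2$, and your organization is marginally cleaner (you extract the inequality $\vec D f_1(p^\varepsilon;z)\geq \vec D f_2(p^\varepsilon;z)+\varepsilon\langle p^\varepsilon,z\rangle$ directly from maximality and propagate it through $\inf_v\sup_\mu$ pointwise, avoiding the $\varepsilon$-optimal $v_\varepsilon,\mu_\varepsilon$ bookkeeping), but it is the same route, not a different one.

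The genuine gaps are in the existence half, i.e.\ in your translation of the viscosity inequalities into \eqref{sup_var} and \eqref{sub_var} (the content of Propositions \ref{prop:equiv_super} and \ref{prop:equiv_sub}). First, the test-function directions are swapped. For the \emph{supersolution} property the test function $\varphi$ must satisfy $\varphi\leq W$ with equality at $p$ (touching from below); an affine function built from a supergradient $x\in\partial^+W(p)$ of a concave function touches from \emph{above} and is therefore not admissible, so ``applying the supersolution property to a test function whose gradient is any supergradient'' does not get off the ground. The paper instead proves $rW(p)\geq H(p,y)$ for one specific $y\in\partial^+W(p)$ obtained as a limit of gradients $\nabla W(p_n)$ at points of differentiability, which suffices since $\inf_v\sup_\mu(\cdots)=\min_{x\in\partial^+W(p)}H(p,x)$ by Lemma \ref{lem:minmaxH}. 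Second, for \eqref{sub_var} you assert that the exposing hyperplane at $p\in Exp(W)$ ``lets one build a strictly concave smooth test function'' touching $W$ only at $p$; this is not true in general. Exposedness gives $\langle x,q\rangle-W(q)>\langle x,p\rangle-W(p)$ for $q\neq p$ with no quantitative modulus, so subtracting a positive-definite quadratic from the exposing affine function can push it below $W$, and no admissible strongly concave test function from above need exist at $p$. This is precisely why the paper's Proposition \ref{prop:equiv_sub} first treats points admitting a strongly concave touching function, and then reaches a general exposed point by perturbing the exposing functional by $-\tfrac1n\ell$, working at the resulting strongly exposed points $p_n\to p$, controlling supergradients via the Moreau--Yosida envelope $\hat f$, and passing to the limit using the local Lipschitz continuity of $H$. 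Without that approximation step, the claim that $W$ satisfies \eqref{sub_var} at every exposed point is unproved.
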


\subsection{Generalizations and open questions}

As in \cite{cardaetal}, the present results can be extended to a zero-sum differential game where each player controls and observes privately his own continuous-time Markov chain.

In Gensbittel \cite{gensbittel2013} and in Gensbittel and Rainer \cite{obsbrown}, different models were analyzed through an approximating sequence of discrete-time game. The main difficulty to adapt the present method to these models lies in the difficulty to extend the duality techniques applied to first-order equations to second-order equations. Therefore, the direct analysis of these models in continuous-time remains a challenging problem.

\section{On the new formulation of the Hamitlon-Jacobi equation}\label{section_HJB}

At first, the next lemma explains why the set $\Delta(U)$ appears in the inequalities \eqref{sup_var} and \eqref{sub_var}.

\begin{lemma}\label{lem:minmaxH}
Let $f: \RR^K \rightarrow \RR$ be a concave Lipschitz function. Then, for all $p\in \Delta(K)$, we have
\begin{align*}
\min_{x \in \partial^+f(p)} H(p,x)&= \inf_{v \in V} \;  \sup_{\mu \in \Delta(U)}\; \vec D f \left(p ; \int_U \tr R(u,v) d\mu(u) p \right) + \int_U rg(p,u,v)d\mu(u).
\end{align*}
\end{lemma}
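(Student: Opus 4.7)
The plan is to prove both sides equal $\min_{x \in \partial^+ f(p)} H(p,x)$ via two successive minimax interchanges. Set $h(x,u,v) = \langle x, \tr R(u,v)p\rangle + rg(p,u,v)$ and $F(x,\mu,v) = \int_U h(x,u,v)\,d\mu(u)$, which is bilinear in $(x,\mu)$ for each fixed $v$.

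First I would collect two preliminary facts. \emph{(i)} Since $f$ is concave and Lipschitz on $\Delta(K)$, a Hahn-Banach separation of its hypograph shows that $\partial^+ f(p)$ is a non-empty convex set bounded by the Lipschitz constant of $f$, hence compact; the classical support-function identity for concave functions,
\[\vec D f(p; z) = \inf_{x \in \partial^+ f(p)} \langle x, z\rangle,\]
then holds for every $z \in T_{\Delta(K)}(p)$. For any $v$ and $\mu \in \Delta(U)$, the direction $\int_U \tr R(u,v)\,d\mu(u)\, p$ belongs to $T_{\Delta(K)}(p)$, since its coordinates sum to zero and are non-negative at any index $k$ with $p_k = 0$. \emph{(ii)} Using Isaacs together with the identity $\sup_{\mu \in \Delta(U)} \int h\,d\mu = \sup_u h$ (linearity in $\mu$), the chain
\[\sup_u \inf_v h \leq \sup_\mu \inf_v F(x,\mu,v) \leq \inf_v \sup_\mu F(x,\mu,v) = \inf_v \sup_u h\]
forces $H(p,x) = \sup_\mu \inf_v F(x,\mu,v) = \inf_v \sup_\mu F(x,\mu,v)$.

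Next I would apply Sion's minimax theorem to swap $\sup_\mu$ and $\inf_x$. For each fixed $v$, $F(\cdot,\cdot,v)$ is bilinear on the product of the compact convex set $\partial^+ f(p) \subset \RR^K$ with the convex set $\Delta(U)$; linearity provides continuity in $x$ (automatic on a compact finite-dimensional set) and quasi-concavity in $\mu$, which are exactly the hypotheses of Sion's theorem in its quasi-concave form, and no topological structure on $\Delta(U)$ beyond convexity is needed. One concludes, for each $v$,
\[\sup_\mu \inf_{x \in \partial^+ f(p)} F(x,\mu,v) = \inf_{x \in \partial^+ f(p)} \sup_\mu F(x,\mu,v).\]

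Combining with (i) and (ii) and observing that the RHS of the lemma equals $\inf_v \sup_\mu \inf_x F$ by (i):
\[\inf_v \sup_\mu \inf_x F = \inf_v \inf_x \sup_\mu F = \inf_x \inf_v \sup_u h = \inf_x H(p,x) = \min_{x \in \partial^+ f(p)} H(p,x),\]
the last equality from the Lipschitz continuity of $H(p,\cdot)$ (Lemma \ref{lem:properties}) and compactness of $\partial^+ f(p)$. The main obstacle is justifying Sion's minimax without a natural compactness structure on $\Delta(U)$; the bilinearity of $F$ is the resolving feature, delivering both quasi-convexity in $x$ and quasi-concavity in $\mu$, so that compactness of the superdifferential alone suffices and no semi-continuity is needed on the $\mu$-side.
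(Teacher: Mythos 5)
Your proof is correct and follows essentially the same route as the paper's: both rewrite the right-hand side via the support-function identity $\vec D f(p;z)=\min_{x\in\partial^+f(p)}\langle x,z\rangle$, swap $\min_x$ with $\sup_\mu$ by a Sion-type minimax theorem using only compactness and continuity on the $\partial^+f(p)$ side (bilinearity handling the $\Delta(U)$ side), and then commute the two infima. The extra verifications you include (tangent-cone membership of the direction, the $\sup_\mu\inf_v$ identity) are harmless but not needed.
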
 
\begin{proof}
The map 
\[ (x,\mu) \in \partial^+f(p)\times \Delta(U) \rightarrow \left\langle x, \int_U \tr R(u,v) d\mu(u) p \right\rangle + \int_Ur g(p,u,v)d\mu(u),\]
is bilinear and continuous with respect to $x$, $\partial^+f(p)$ is a compact convex set and $\Delta(U)$ is a convex set. Therefore, an extension of Sion's minmax theorem (see e.g. \cite{Sorin}) implies that 
\begin{align*}
\min_{x \in \partial^+f(p)} H(p,x)&=  \inf_{v \in V} \; \min_{x \in \partial^+f(p)} \; \sup_{u\in U}\; \langle x, \tr R(u,v) p \rangle + rg(p,u,v) \\ 
&=\inf_{v \in V} \;  \min_{x \in \partial^+f(p)} \; \sup_{\mu \in \Delta(U)}\; \left\langle x, \int_U \tr R(u,v) d\mu(u) p \right\rangle + \int_U rg(p,u,v)d\mu(u) \\ 
&= \inf_{v \in V} \;  \sup_{\mu \in \Delta(U)}\; \min_{x \in \partial^+f(p)} \left\langle x, \int_U \tr R(u,v) d\mu(u) p \right\rangle + \int_U rg(p,u,v)d\mu(u) \\
&= \inf_{v \in V} \;  \sup_{\mu \in \Delta(U)}\; \vec Df\left(p ; \int_U \tr R(u,v) d\mu(u) p\right) + \int_U rg(p,u,v)d\mu(u),
\end{align*}
where we also used that for all $(p,z)\in \RR^K\times \RR^K$, we have 
\[ \vec D f(p;z)= \min_{x \in \partial^+f(p)} \langle x, z \rangle.\]
\end{proof}

We prove below that any Lipshitz viscosity solution of \eqref{eq:HJB} is concave and satisfies \eqref{sup_var} and \eqref{sub_var}, and reciprocally that any Lipschitz concave function satisfying  \eqref{sup_var} and \eqref{sub_var} is a viscosity solution of \eqref{eq:HJB}. 

Then, in Proposition \ref{prop:comparison}, we will prove that there exists a unique concave Lipschitz function satisfying \eqref{sup_var} and \eqref{sub_var}. This provides therefore another proof that \eqref{eq:HJB} admits a unique Lipschitz viscosity solution which is shorter and simpler than the proof of the comparison principle given in \cite{cardaetal}. 

\subsection{Equivalence of the two variational characterizations}

We divide the proof of the equivalence in two propositions.

\begin{proposition}\label{prop:equiv_super}\ 
\begin{enumerate}
\item If   $f : \Delta(K) \mapsto \RR$ is a Lipschitz viscosity supersolution of \eqref{eq:HJB}, then $f$ is concave and satisfies \eqref{sup_var}. 
\item If $f : \Delta(K) \mapsto \RR$ is a concave Lipschitz function which satisfies \eqref{sup_var}, then $f$ is a viscosity supersolution of \eqref{eq:HJB}.
\end{enumerate}
\end{proposition}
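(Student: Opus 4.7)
For Part 2, I would proceed by direct verification: assume $f$ is concave, Lipschitz, and satisfies \eqref{sup_var}. Lower semi-continuity is immediate from continuity. Given a smooth $\varphi$ with $\varphi - f$ attaining a global maximum at $p$, normalize to $\varphi(p) = f(p)$ so that $\varphi \leq f$ on all of $\Delta(K)$. For any $z \in TS_{\Delta(K)}(p)$, the chain
\[
\varphi(p+\varepsilon z) + \varphi(p-\varepsilon z) \leq f(p+\varepsilon z) + f(p-\varepsilon z) \leq 2f(p) = 2\varphi(p)
\]
(first inequality from $\varphi \leq f$, second from concavity of $f$), combined with the Taylor expansion of the smooth $\varphi$, forces $\langle D^2\varphi(p) z, z\rangle \leq 0$ and hence $-\lambda_{\max}(p, D^2\varphi(p)) \geq 0$. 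For the first-order inequality, passing to the right directional derivative in $\varphi \leq f$ yields $\vec D f(p; z) \geq \langle \nabla \varphi(p), z\rangle$ for every $z \in T_{\Delta(K)}(p)$; applying this with $z = \int_U \tr R(u,v) d\mu(u) p$, then taking $\sup_\mu$ and $\inf_v$ and combining with \eqref{sup_var} gives
\[
rf(p) \geq \inf_v \sup_\mu \vec D f\Bigl(p; \int_U \tr R(u,v) d\mu(u) p\Bigr) + \int_U rg(p,u,v) d\mu(u) \geq H(p, \nabla\varphi(p)).
\]

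For the concavity part of Part 1, suppose $f$ is not concave; then along some segment $[p_0, p_1] \subset \Delta(K)$ a chord $L$ strictly exceeds the restriction $g(t) = f(p_0 + t(p_1-p_0))$ on some $(a,b)$ with $L(a) = g(a)$ and $L(b) = g(b)$. For small $\varepsilon > 0$, the strictly convex perturbation $L_\varepsilon(t) = L(t) - \varepsilon(t-a)(b-t)$ still exceeds $g$ somewhere in $(a,b)$. I would lift $L_\varepsilon$ to a smooth test function on $\Delta(K)$ via
\[
\varphi(q) = L_\varepsilon(T(q)) - C \bigl\|q - p_0 - T(q)(p_1 - p_0)\bigr\|^2, \quad T(q) = \frac{\langle q - p_0, p_1 - p_0\rangle}{|p_1 - p_0|^2},
\]
choosing $C$ large so that the (positive) global maximum of $\varphi - f$ on $\Delta(K)$ is attained at an interior point $p^*$ close to the segment. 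The direction $z = p_1 - p_0$ lies in $TS_{\Delta(K)}(p^*)$, and a direct computation gives $\langle D^2\varphi(p^*) z, z\rangle/|z|^2 = L_\varepsilon''(T(p^*)) = 2\varepsilon > 0$, so $\lambda_{\max}(p^*, D^2\varphi(p^*)) \geq 2\varepsilon > 0$, contradicting the supersolution inequality $-\lambda_{\max}(p^*, D^2\varphi(p^*)) \geq 0$. Hence $f$ is concave.

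For \eqref{sup_var} in Part 1, by Lemma \ref{lem:minmaxH} it suffices to show $rf(p) \geq \min_{x \in \partial^+ f(p)} H(p, x)$ at every $p$. At any Alexandrov point $p$ of the concave Lipschitz $f$ (such points have full Lebesgue measure by Alexandrov's theorem), the pointwise second-order expansion of $f$ yields a local semi-concavity bound $f(q) \geq f(p) + \langle \nabla f(p), q-p\rangle - C_p|q-p|^2$; the test function $\varphi(q) = f(p) + \langle \nabla f(p), q-p\rangle - M|q-p|^2 - K|q-p|^4$ with $M \geq C_p$ and $K$ large enough then satisfies $\varphi \leq f$ globally on $\Delta(K)$, so the supersolution inequality produces $rf(p) \geq H(p, \nabla f(p)) = \min_{x \in \partial^+ f(p)} H(p, x)$ (the superdifferential being a singleton at a differentiability point). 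For an arbitrary $p_0 \in \Delta(K)$, the Clarke representation of the superdifferential for a concave Lipschitz function exhibits each extreme point $x$ of $\partial^+ f(p_0)$ as a limit $\nabla f(p_n) \to x$ along a sequence of Alexandrov points $p_n \to p_0$; passing to the limit in $rf(p_n) \geq H(p_n, \nabla f(p_n))$, using continuity of $rf$ and the joint continuity of $H$ from Lemma \ref{lem:properties}, yields $rf(p_0) \geq H(p_0, x)$ for every extreme $x$, and therefore $rf(p_0) \geq \min_{x \in \partial^+ f(p_0)} H(p_0, x)$.

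The hardest step is the concavity proof, where the multidimensional lift of the one-dimensional chord argument must be calibrated so that the test function's Hessian carries a strictly positive eigenvalue in a tangent direction of $\Delta(K)$ while still forcing the global maximum of $\varphi - f$ to fall at a point where the supersolution condition can be invoked; the extension step in \eqref{sup_var} also relies crucially on Alexandrov's theorem together with the Clarke representation of the superdifferential, without which the supersolution property (being local to those points where smooth touching-from-below test functions exist) would not propagate to all of $\Delta(K)$.
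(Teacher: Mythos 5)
Your argument is correct in substance and reaches the same conclusions, but it takes a noticeably more self-contained route than the paper in Part 1. Where the paper simply invokes Lemma 1 of \cite{ALL} for concavity, you reprove that lemma directly via the chord-lifting construction; and where the paper uses Rademacher's theorem plus the ``well-known'' fact that the supersolution inequality holds with the gradient at differentiability points, you go through Alexandrov points with an explicitly constructed smooth touching function and then the Clarke representation of the superdifferential. Your route buys independence from \cite{ALL} and avoids the $C^1$-versus-smooth test-function equivalence that the paper's ``well-known'' step tacitly relies on; the paper's route is shorter because it only needs \emph{one} cluster point $y$ of gradients: since $y\in\partial^+\tilde f(p)$ gives $\langle y,z\rangle\geq \vec D f(p;z)$ for all tangent $z$, the single inequality $rf(p)\geq H(p,y)$ already implies \eqref{sup_var} directly, with no need for Lemma \ref{lem:minmaxH}, extreme points, or the full $\min_{x\in\partial^+f(p)}H(p,x)$. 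In Part 2 you follow the paper's argument for the first-order inequality and, usefully, make explicit the verification of $-\lambda_{\max}(p,D^2\varphi(p))\geq 0$ via the symmetric difference quotient, which the paper leaves implicit.

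Two technical points deserve more care in your write-up. First, $\Delta(K)$ has empty interior in $\RR^K$, so ``Alexandrov points'', ``$\nabla f(p)$'', ``interior point $p^*$'' and the compactness of $\partial^+f(p)$ (needed for extreme points and for Lemma \ref{lem:minmaxH}) only make sense after restricting $f$ to the affine hull of $\Delta(K)$, or after replacing $\partial^+f$ by the superdifferential of a Lipschitz concave extension such as the Moreau--Yosida regularization used elsewhere in the paper; note that $\partial^+f(p)=\partial^+\tilde f(p)+\RR\cdot(1,\dots,1)$ and that $H(p,\cdot)$ is invariant under this translation, so nothing is lost, but the statement as you wrote it is not literally meaningful. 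Second, in the concavity argument you must ensure that the maximizer $p^*$ satisfies $p_1-p_0\in TS_{\Delta(K)}(p^*)$; if the offending segment lies on a proper face of the simplex, a point merely ``close to the segment'' need not have this property. This is repaired by first observing (via continuity of $f$) that failure of concavity can be witnessed by a segment contained in the relative interior of $\Delta(K)$, and then taking $C$ large enough that $p^*$ stays in the relative interior. With these adjustments your proof is complete.
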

\begin{proof}
Let us prove $1)$. In order to work on a convex set with non-empty interior, we denote by $\tilde f$ denote the restriction of $f$ to the affine space $A$ spanned by $\Delta(K)$.
The fact that $\tilde f$ is concave  on the relative interior of $\Delta(K)$ follows from   Lemma 1 in \cite{ALL} and the property extends to $\Delta(K)$ by continuity.
\p
$\tilde f$ is therefore concave  and Lipschitz on $\Delta(K) \subset A$ and its superdifferential is given by
\[\partial^+\tilde f(p)=\{ x \in E \tq \forall p' \in \Delta(K), \; \tilde{f} (p) + \langle x,p'-p \rangle \geq \tilde f (p')\},\]
where $E=A-A$ is the tangent space to $A$ and it is easily seen that $\partial^+f(p)= \partial^+ \tilde f(p) + \RR \cdot \gamma$ where $\gamma=(1,1,...,1)$ is a vector orthogonal to $A$ so that $\RR^K= A \oplus \RR \cdot \gamma$. 
Moreover, for all $p \in \Delta(K)$ and and all $z\in T_{\Delta(K)}(p)$, we have (see e.g. the appendix of \cite{GensbittelGrun} for the second equality)
\[ \vec D f(p;z)= \vec D \tilde{f}(p;z) = \min_{x \in \partial^+ \tilde f(p)} \langle x , z\rangle.\]
$\tilde{f}$ is differentiable at Lebesgue almost every $p$ in the relative interior of $\Delta(K)$ and its gradient $\nabla \tilde f(p) \in E$ is bounded by the Lipschitz constant of $f$. For any such $p$, it is well-known that the viscosity supersolution property implies that
\[ r \tilde f(p) \geq H(p,\nabla \tilde f(p)).\]
For any $p \in \Delta(K)$, there exists a sequence $p_n$ in the relative interior of $\Delta(K)$ with limit $p$ such that $\tilde f$ is differentiable at $p_n$ for all $n$. 
The sequence $\nabla \tilde f(p_n)$ being bounded, up to extract a subsequence, we may assume that $\nabla \tilde f(p_n) \rightarrow y \in \partial^+ \tilde f(p)$. 
We obtain
\begin{align*}
r \tilde f(p) = \lim_n r \tilde f(p_n) &\geq \lim_n  H(p_n,\nabla \tilde f (p_n)) =H(p,y).
\end{align*}
Since $y \in \partial^+ \tilde f(p)$, for all $z \in T_{\Delta(K)}(p)$, we have
\[ \langle y , z \rangle \geq \vec D \tilde f(p ; z) .\]
We deduce that
\begin{align*}
r\tilde f(p) \geq H(p,y) &= \inf_{v \in V} \; \sup_{\mu \in \Delta(U)} \;  \left\langle y, \int_U\tr R(u,v)d\mu(u) p \right\rangle + \int_U rg(p,u,v)d\mu(u) \\
&\geq  \inf_{v \in V} \; \sup_{\mu \in \Delta(U)} \; \vec D \tilde f \left(p ; \int_U\tr R(u,v)d\mu(u) p \right) + \int_U rg(p,u,v)d\mu(u),
\end{align*}
which concludes the proof.
\p
Let us prove $2)$. Assume that $\phi$ is a smooth test function such that $\phi \leq f$ on $\Delta(K)$ with equality at $p$. For any $z \in T_{\Delta(K)}(p)$, we have therefore
\[ \langle \nabla \phi(p) , z \rangle =\vec D \phi(p; z) \leq  \vec D f(p; z).\]
We deduce that
\begin{align*}
r f(p)  &\geq   \inf_{v \in V} \;\sup_{\mu \in \Delta(U)} \; \vec D f \left(p; \int_U \tr R(u,v) d\mu(u) p\right) + \int_U rg(p,u,v)d\mu(u) \\
& \geq \inf_{v \in V} \;\sup_{\mu \in \Delta(U)} \left\langle \nabla \phi(p),  \int_U \tr R(u,v) d\mu(u) p \right\rangle + \int_U rg(p,u,v)d\mu(u) \\
& \geq \inf_{v \in V} \;\sup_{u\in U} \; \langle \nabla \phi(p),  \tr R(u,v) p \rangle + rg(p,u,v) \\
&= H(p,\nabla\phi(p)),
\end{align*}
which concludes the proof.
\end{proof}

\begin{proposition}\label{prop:equiv_sub}
\ 
\begin{enumerate}
\item If  $f : \Delta(K) \mapsto \RR$ is a concave Lipschitz viscosity subsolution of \eqref{eq:HJB}, then $f$ is satisfies \eqref{sub_var}. 
\item If $f : \Delta(K) \mapsto \RR$ is a concave Lipschitz function which satisfies \eqref{sub_var}, then $f$ is a viscosity subsolution of \eqref{eq:HJB}.
\end{enumerate}
\end{proposition}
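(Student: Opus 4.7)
The plan is to exploit a common observation: whenever $\varphi$ is a smooth test function such that $\varphi-f$ attains its global minimum over $\Delta(K)$ at $p$, writing $\varphi(q)-\varphi(p)\geq f(q)-f(p)$ and taking directional limits along $d\in T_{\Delta(K)}(p)$ gives $\langle\nabla\varphi(p),d\rangle\geq\vec{D}f(p;d)$, which (by concavity of $f$) is exactly the characterization $\nabla\varphi(p)\in\partial^+f(p)$. Together with Lemma \ref{lem:minmaxH}, which rewrites the variational quantity in \eqref{sub_var} as $\min_{y\in\partial^+f(p)}H(p,y)$, this observation drives both implications.

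For part 1, fix $p\in Exp(f)$ with exposing vector $x$ and set $\psi(q)=f(p)+\langle x,q-p\rangle-f(q)\geq 0$, so that $\{\psi=0\}=\{p\}$. For $\varepsilon>0$ define
\[\varphi_\varepsilon(q)=f(p)+\langle x,q-p\rangle-\tfrac{\varepsilon}{2}|q-p|^2+c_\varepsilon,\qquad c_\varepsilon=\sup_{q\in\Delta(K)}\bigl[\tfrac{\varepsilon}{2}|q-p|^2-\psi(q)\bigr].\]
By construction $\varphi_\varepsilon-f\geq 0$ on $\Delta(K)$, with equality at some minimizer $p_\varepsilon\in\argmin_q\bigl[\psi(q)-\tfrac{\varepsilon}{2}|q-p|^2\bigr]$; since $\psi$ is continuous, non-negative and strictly positive off $p$, a direct compactness argument yields $p_\varepsilon\to p$ as $\varepsilon\to 0$. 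Because $D^2\varphi_\varepsilon\equiv-\varepsilon I$, one has $-\lambda_{\max}(p_\varepsilon,D^2\varphi_\varepsilon(p_\varepsilon))\geq\varepsilon>0$, so the subsolution property applied at $p_\varepsilon$ with test function $\varphi_\varepsilon$ forces $rf(p_\varepsilon)\leq H(p_\varepsilon,\nabla\varphi_\varepsilon(p_\varepsilon))$; letting $\varepsilon\to 0$ gives $rf(p)\leq H(p,x)$. To extend this to every $y\in\partial^+f(p)$, note that for $t\in(0,1]$ the convex combination $y_t=(1-t)y+tx$ still exposes $p$ (the strict inequality from $x$ dominates the weak inequality from the supergradient $y$), so $rf(p)\leq H(p,y_t)$ by the previous step, and sending $t\to 0$ by continuity of $H$ gives $rf(p)\leq H(p,y)$. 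Minimizing over $y$ and invoking Lemma \ref{lem:minmaxH} yields \eqref{sub_var}.

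For part 2, let $\varphi$ be smooth with $\varphi-f$ attaining its global minimum at $p$. If $\lambda_{\max}(p,D^2\varphi(p))\geq 0$ the subsolution inequality is immediate; otherwise the key claim is that $p\in Exp(f)$. Let $F$ be the smallest face of $\Delta(K)$ containing $p$, so that $F-F=TS_{\Delta(K)}(p)$. Any vector exposing $p$ in the restriction $f|_F$ extends to a vector exposing $p$ in $f$ by adding sufficiently large weights on the coordinates absent from $F$ (a standard penalization argument using boundedness of $f$); hence $p\notin Exp(f)$ implies $p\notin Exp(f|_F)$, and in particular $\nabla\varphi(p)$ fails to uniquely expose $p$ in $f|_F$, so there exists $p'\in F$ with $p'\neq p$ and $p,p'\in\argmax_{q\in\Delta(K)}\{f(q)-\langle\nabla\varphi(p),q\rangle\}$. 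Concavity of $f$ then makes $f$ affine along $[p,p']$ with slope $\langle\nabla\varphi(p),p'-p\rangle$; substituting $q=p+t(p'-p)$ into $\varphi(q)-\varphi(p)\geq f(q)-f(p)$ and Taylor-expanding to second order yields $\langle D^2\varphi(p)(p'-p),p'-p\rangle\geq 0$, contradicting $\lambda_{\max}(p,D^2\varphi(p))<0$ since $p'-p\in TS_{\Delta(K)}(p)\setminus\{0\}$. Once $p\in Exp(f)$ is known, \eqref{sub_var}, Lemma \ref{lem:minmaxH} and $\nabla\varphi(p)\in\partial^+f(p)$ combine to give $rf(p)\leq\min_{y\in\partial^+f(p)}H(p,y)\leq H(p,\nabla\varphi(p))$, completing the subsolution inequality.

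The hardest step is the construction in part 1: the gap $\psi$ may decay arbitrarily slowly near the exposed point $p$, so no single smooth test function can simultaneously satisfy $\nabla\varphi(p)=x$, $D^2\varphi(p)<0$ on $TS_{\Delta(K)}(p)$, and $\varphi\geq f$ on all of $\Delta(K)$. The remedy is to shift by the constant $c_\varepsilon$ and apply the PDE inequality at the free minimizer $p_\varepsilon$ rather than at $p$, recovering the desired inequality at $p$ only by continuity as $\varepsilon\to 0$; the auxiliary convex-combination trick is then needed because $\partial^+f(p)$ is typically not a singleton at exposed points.
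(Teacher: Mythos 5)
Your proof is correct, but it departs from the paper's argument in both halves, so a comparison is in order. For part 1, the paper proceeds in two stages: it first proves $rf(p)\leq \min_{x\in\partial^+\hat f(p)}H(p,x)$ at any point admitting a smooth strongly concave function touching $f$ from above (using test functions of the form $f(p)+\langle \lambda\nabla\phi(p)+(1-\lambda)x,p'-p\rangle+\lambda\psi(p')$), and then reaches a general exposed point by perturbing the exposing functional with $-\tfrac1n\sqrt{1+|p'|^2}$ and tracking supergradients of the Moreau--Yosida extension $\hat f$ along the resulting sequence $p_n\to p$. Your single-stage construction --- perturbing the exposing functional by an explicit quadratic and applying the subsolution property directly at the free minimizer $p_\varepsilon$ --- reaches the same inequality $rf(p)\leq H(p,x)$ more economically, and your convex-combination step $y_t=(1-t)y+tx$ is exactly the paper's $y_\lambda$ trick. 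The only thing you elide is that Lemma \ref{lem:minmaxH} is stated for globally defined concave Lipschitz functions, so the final ``minimize over $y$'' must be run over the compact set $\partial^+\hat f(p)$ of a Lipschitz extension (as in \eqref{eq:direct_coincides}); since your inequality holds for every supergradient this is only bookkeeping. For part 2, the paper constructs the exposing vector explicitly as $\nabla\varphi(p)-x$ with $x$ in the relative interior of the normal cone, getting strict inequality off $p+TS_{\Delta(K)}(p)$ from $x$ and on it from strong concavity of $\varphi$ plus concavity of $f$; you instead pass through the smallest face $F$, derive a contradiction from a second-order Taylor expansion along a segment of affineness of $f$, and lift $Exp(f|_F)$ to $Exp(f)$ by penalizing the coordinates outside $F$. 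That lifting claim is true, but your stated justification (``boundedness of $f$'') is not quite the right reason: near $F$ the gap function $\langle x_F,q\rangle-f(q)-(\langle x_F,p\rangle-f(p))$ can be negative of the same order as the penalized mass $\sum_{k\notin S}q_k$, so a finite penalty weight dominates only because $f$ is \emph{Lipschitz}, which controls the gap by a constant times the distance to $F$; with boundedness alone the argument fails arbitrarily close to $F$. Since the proposition assumes $f$ Lipschitz this is a one-line repair, and both routes then close identically via $\nabla\varphi(p)\in\partial^+f(p)$ and Lemma \ref{lem:minmaxH}. On balance your part 1 is leaner than the paper's, while your part 2 trades the paper's direct normal-cone construction for a contrapositive that needs the extra lifting lemma.
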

\begin{proof}
Let us prove $1)$. We first assume that $p \in \Delta(K)$ is such that there exists some smooth strongly concave (on a neighborhood of $\Delta(K)$)  map $\phi$ such that $\phi \geq f$ on $\Delta(K)$ and $\phi(p)=f(p)$. Since $\phi$ is strongly concave, there exists $\varepsilon>0$ such that $D^2\phi(p) \leq -\varepsilon I$ and thus $\lambda_{\max}(p,D^2\phi(p))<0$.
The viscosity subsolution property implies therefore that 
\[ r\phi(p) \leq H(p,\nabla \phi(p)).\]
Define for all $p' \in \Delta(K)$, $\psi(p')=\phi(p') - \phi(p) -  \langle \nabla \phi(p), p'-p \rangle$ and note that $\psi$ is strongly concave and that $\psi(p)=0$.  
We have
\[ \forall p' \in \Delta(K), f(p') \leq f(p) + \langle \nabla \phi(p), p'-p \rangle + \psi(p') .\]
Let $\hat f : \RR^K \rightarrow \RR$ denote the Moreau-Yosida regularization of $f$ defined by 
\begin{equation*}
 \forall y\in \RR^K, \; \hat f(y)= \sup_{p' \in \Delta(K)} f(p') - M|y-p'|,
\end{equation*}
for some constant $M$ larger than the Lipschitz constant of $f$. It is well-known that $\hat{f}$ is concave and $M$-Lipschitz on $\RR^K$ and coincides with $f$ on $\Delta(K)$ so that
\begin{equation} \label{eq:direct_coincides}
\forall p' \in \Delta(K), \forall z \in T_{\Delta(K)}(p'), \vec D f(p;z)= \vec D \hat f(p;z)= \min_{x \in \partial^+ \hat f(p')} \langle x,z \rangle,
\end{equation}  
where the set $\partial^+ \hat f(p')$ is a compact convex subset of $\partial^+  f(p')$ .
\p
Let $x\in \partial^+ \hat f(p)$ so that
\[ \forall p' \in \Delta(K), f(p') \leq f(p) + \langle x, p'-p \rangle.\]
For all $\lambda \in (0,1]$, we have
\[ \forall p' \in \Delta(K), f(p') \leq f(p) + \langle \lambda \nabla \phi(p)+(1-\lambda) x, p'-p \rangle + \lambda \psi(p'),\]
with equality at $p'=p$. Using the right-hand side of the above inequality which is strongly concave as a test function and applying the viscosity subsolution property, we deduce that 
\[ rf(p) \leq H( p,\lambda \nabla \phi(p)+(1-\lambda) x),\]
and letting $\lambda$ go to zero, we obtain $rf(p) \leq H(p,x)$. We conclude that
\begin{equation}\label{eq:ineq_strongly_exposed}
rf(p) \leq \min_{x \in \partial^+ \hat f(p)} H(p,x). 
\end{equation}
Note that the above inequality holds for any value of $M$ larger thant the Lipschitz constant of $f$.
\p
Let us now consider an arbitrary point $p\in Exp(f)$. 
Let $x \in\partial^+  f(p)$ such that
\[ \argmin_{p' \in \Delta(K)} \;\langle x, p' \rangle - f(p') = \{p\}.\]
Define $\hat f$ as above with $M \geq |x|+1$.
Let $y \in \partial^+ \hat f(p)$ and note that for all $\lambda \in (0,1]$, we have  
\[ \argmin_{p' \in \Delta(K)}\; \langle  y_\lambda, p' \rangle - f(p') = \{p\}.\]
with $y_\lambda= \lambda x +(1-\lambda) y$. For all $n\geq 1$, let 
\[p_n \in \argmin_{p' \in  \Delta(K)} \;\langle y_\lambda, p' \rangle - f(p')- \frac{1}{n}\ell(p'),\]
where $\ell(p')=\sqrt{1+|p'|^2}$. Note that the map $p' \rightarrow
 \langle y_\lambda, p' \rangle - \frac{1}{n}\ell(p')$ is strongly concave. 
By construction  $y_\lambda-\frac{1}{n}\nabla\ell(p_n) \in \partial^+f(p_n)$ and $p_n \rightarrow p$. Moreover, with our choice of $M$, we have $y_\lambda-\frac{1}{n}\nabla\ell(p_n) \in \partial^+ \hat f(p_n)$ for $n\geq \tfrac{1}{\lambda}$. Indeed, for such $n$ we have 
\[  |y_\lambda-\tfrac{1}{n}\nabla\ell(p_n)| \leq \lambda |x| +(1-\lambda) M +\tfrac{1}{n} \leq M - \lambda +\tfrac{1}{n} \leq M\]
and for all $z\in \RR^K$, there exists $p_z \in \Delta(K)$ such that
\begin{align*} 
\hat f(z) = f(p_z) - M|p_z-z| &\leq f(p_n)+ \langle y_\lambda-\tfrac{1}{n}\nabla\ell(p_n), p_z-p_n \rangle  -M|p_z-z| \\
&=\hat f(p_n)+ \langle y_\lambda-\tfrac{1}{n}\nabla\ell(p_n), p_z-p_n \rangle  -M|p_z-z| \\
&\leq \hat f(p_n)+ \langle y_\lambda-\tfrac{1}{n}\nabla\ell(p_n), z-p_n \rangle 
\end{align*}
which proves that $y_\lambda-\tfrac{1}{n}\nabla\ell(p_n) \in \partial^+ \hat f(p_n)$.
Using now \eqref{eq:ineq_strongly_exposed}, we have for all $n \geq \tfrac{1}{\lambda}$
\[ rf(p_n) \leq \min_{z \in \partial^+ \hat f(p_n)} H(p_n,z) \leq H(p_n,y_\lambda-\tfrac{1}{n}\nabla\ell(p_n)),\]
and therefore taking the limit as $n\rightarrow \infty$, we obtain $rf(p) \leq H(p,y_\lambda)$. By sending $\lambda$ to zero, we obtain $rf(p) \leq H(p,y)$ and the conclusion follows by taking the infimum over all $y \in \partial^+ \hat f(p)$ and then applying Lemma \ref{lem:minmaxH} and \eqref{eq:direct_coincides}.
\p
Let us prove $2)$. Let $\phi$ be a smooth test function such that $\phi \geq f$ with equality at $p \in \Delta(K)$ and $\lambda_{\max}(p,D^2\phi(p))<0$. 
Recall the definition of $\hat f$ in the proof of $1)$. By choosing $M \geq C=\sup_{y \in B} |\nabla  \phi(y) |$ where $B$ is a bounded neighborhood of $\Delta(K)$, we have $\phi \geq \hat f$ in $B$ and therefore $\nabla\phi(p) \in \partial^+ \hat f (p)$.
Indeed, if there exists $y \in B$ such that $\phi(y)< \hat f(y)$, then there exists $p_y \in \Delta(K)$ such that 
\[ \hat f(y)= f(p_y) - M|y-p_y| > \phi(y) \geq \phi(p_y) - C|y-p_y|  ,\]
which implies $f(p_y)>\phi(p_y)$ and thus contradicts the assumption. 
\p
Recall that $TS_{\Delta(K)}(p)$ denotes the tangent space of $\Delta(K)$ at $p$. Let $x \in \RR^K$ be a vector in the relative interior of the normal cone to $\Delta(K)$ at $p$ so that 
\[ \forall p' \in \Delta(K) \setminus (p+TS_{\Delta(K)}(p)), \; \; \langle x, p'-p \rangle <0.\]  
\[ \forall p' \in  (p+TS_{\Delta(K)}(p))\cap \Delta(K), \; \; \langle x, p'-p \rangle =0.\] 
We deduce that $\nabla\phi(p)-x \in \partial^+f(p)$.
On the other hand, since $\lambda_{\max}(p,D^2\phi(p))<0$, the map $\phi$ is  strongly concave on a neighborhood $\CO$ of $p$ in the affine space $p+TS_{\Delta(K)}(p)$ so that
\[ \forall p' \in \CO \cap \Delta(K), \;\; p'\neq p \Longrightarrow f(p') \leq \phi(p') < f(p) +\langle  \nabla \phi(p), p'-p \rangle.\]
Since $f$ is concave, we deduce that
\[ \forall p' \in (p+TS_{\Delta(K)}(p))\cap \Delta(K), \;\; p'\neq p \Longrightarrow f(p')  < f(p) +\langle  \nabla \phi(p), p'-p \rangle.\]
Combining the above inequalities, we obtain 
\[ \argmin_{p' \in \Delta(K)}\; \langle \nabla \phi(p) - x, p'-p \rangle - f(p') =\{p\},\]
which implies $p \in Exp(f)$. We deduce that 
\[ rf(p) \leq \min_{x \in \partial^+ \hat f(p)} H(p,x)  \leq H(p,D \phi(p)).\] 
\end{proof}

\subsection{Comparison principle}

Let us now prove a comparison principle for the new formulation of the equation.
The proof is quite simple and inspired by the proof of Mertens and Zamir \cite{MZ}. 

\begin{proposition}\label{prop:comparison}
Let $W_1$ and $W_2$ be concave Lipschitz functions from $\Delta(K)$ to $\RR$ such that $W_1$ satisfies \eqref{sup_var} and $W_2$ satisfies \eqref{sub_var}. Then $W_1 \geq W_2$.  
\end{proposition}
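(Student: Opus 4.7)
The plan is to argue by contradiction: suppose $M:=\max_{p\in\Delta(K)}(W_2(p)-W_1(p))>0$ and derive $rM\le 0$. The central issue is that the inequality \eqref{sub_var} for $W_2$ is only known on $Exp(W_2)$, which in general need not contain any point where $W_2-W_1$ is maximal. I would resolve this with a small linear perturbation whose unique maximizer automatically belongs to $Exp(W_2)$; this is the Mertens--Zamir style trick that makes the method work.

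Define
\[ \Psi(y):=\max_{p\in\Delta(K)}\bigl(W_2(p)-W_1(p)-\langle y,p\rangle\bigr),\qquad y\in\RR^K. \]
Since $|p|\le 1$ on $\Delta(K)$, $\Psi$ is finite, convex, and $1$-Lipschitz, with $\Psi(0)=M>0$. By Rademacher's theorem $\Psi$ is differentiable on a set of full Lebesgue measure, which is dense arbitrarily close to $0$. At any such differentiability point $y$, every $p^*\in\argmax(W_2-W_1-\langle y,\cdot\rangle)$ satisfies $-p^*\in\partial\Psi(y)=\{\nabla\Psi(y)\}$, so $p^*=-\nabla\Psi(y)$ is uniquely determined.

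The key claim is that $p^*\in Exp(W_2)$. Pick any $x\in\partial^+W_1(p^*)$, non-empty as $W_1$ is concave Lipschitz. For every $q\in\Delta(K)\setminus\{p^*\}$ the strict inequality $W_2(q)-W_1(q)-\langle y,q\rangle<W_2(p^*)-W_1(p^*)-\langle y,p^*\rangle$ and the superdifferential inequality $W_1(q)\le W_1(p^*)+\langle x,q-p^*\rangle$ combine into $W_2(q)-\langle y+x,q\rangle<W_2(p^*)-\langle y+x,p^*\rangle$, which shows $\{p^*\}=\argmax_{q\in\Delta(K)}\bigl(W_2(q)-\langle y+x,q\rangle\bigr)$, i.e.\ $p^*\in Exp(W_2)$. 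The same maximality of $p^*$, by dividing $(W_2-W_1)(p^*+tz)-(W_2-W_1)(p^*)\le t\langle y,z\rangle$ by $t>0$ and sending $t\downarrow 0$ (using additivity of directional derivatives of Lipschitz concave functions), yields
\[ \vec D W_2(p^*;z)\le\vec D W_1(p^*;z)+\langle y,z\rangle,\qquad z\in T_{\Delta(K)}(p^*). \]

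The directions $F_{v,\mu}:=\int_U\tr R(u,v)\,d\mu(u)\,p^*$ lie in $T_{\Delta(K)}(p^*)$ because $R(u,v)$ is a transition rate matrix, and satisfy $|F_{v,\mu}|\le\|R\|_\infty$. Applying \eqref{sub_var} at $p^*\in Exp(W_2)$, then the preceding bound, and finally \eqref{sup_var} for $W_1$ at $p^*$ gives
\[ rW_2(p^*)\le\inf_{v\in V}\sup_{\mu\in\Delta(U)}\Bigl(\vec D W_1(p^*;F_{v,\mu})+\langle y,F_{v,\mu}\rangle+\int_U rg(p^*,u,v)\,d\mu(u)\Bigr)\le rW_1(p^*)+\|R\|_\infty|y|. \]
Since $W_2(p^*)-W_1(p^*)=\Psi(y)+\langle y,p^*\rangle\ge M-2|y|$, this reads $r(M-2|y|)\le\|R\|_\infty|y|$, and letting $|y|\to 0$ along differentiability points of $\Psi$ produces $rM\le 0$, the contradiction. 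The only nontrivial obstacle in this plan is to localize the argmax of $W_2-W_1$ inside $Exp(W_2)$, which is precisely what the linear perturbation combined with $x\in\partial^+W_1(p^*)$ achieves.
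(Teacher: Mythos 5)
Your proof is correct. It shares the overall architecture of the paper's argument (contradiction, a perturbation that forces the maximizer of $W_2-W_1$ into $Exp(W_2)$, then a comparison of the two variational inequalities at that point), but the perturbation device and the endgame are genuinely different. The paper maximizes $W_2-(W_1-\varepsilon\ell)$ with the smooth strictly convex $\ell(p)=\sqrt{1+|p|^2}$, obtains exposedness of the maximizer $p_\varepsilon$ from the strict concavity of the majorant $W_1(p_\varepsilon)+\langle y_\varepsilon,p-p_\varepsilon\rangle-\varepsilon\ell(p)+M_\varepsilon$ with $y_\varepsilon\in\partial^+W_1(p_\varepsilon)$, and then reaches the contradiction by selecting $\varepsilon$-optimal $v_\varepsilon$ and $\mu_\varepsilon$ and exhibiting a direction in which the perturbed difference has strictly positive right derivative at its maximum. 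You instead perturb linearly and get uniqueness of the maximizer from Rademacher's theorem applied to the convex marginal function $\Psi$, identifying every maximizer with $-\nabla\Psi(y)$ at a differentiability point; the step that turns this unique maximizer into an exposed point of $W_2$ via $x\in\partial^+W_1(p^*)$ is the same trick the paper uses. Your endgame is cleaner: the pointwise bound $\vec D W_2(p^*;z)\le \vec D W_1(p^*;z)+\langle y,z\rangle$ lets you chain the two inf--sup quantities directly and conclude $r(M-2|y|)\le C|y|$ without choosing near-optimal controls. What the paper's route buys is that it is entirely explicit and avoids Rademacher; what yours buys is a shorter final computation. Both arguments correctly rely on the fact that $\int_U \tr R(u,v)\,d\mu(u)\,p$ belongs to the tangent cone $T_{\Delta(K)}(p)$, which is what makes the directional derivatives well defined.
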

\begin{proof}
Assume by contradiction that $M=\max_{p \in \Delta(K)}W_2(p)-W_1(p)>0$. 
For all $\varepsilon>0$, define the perturbed problem 
\[ M_\varepsilon := \max_{p \in \Delta(K)} W_2(p)-(W_1(p)-\varepsilon \ell(p) ),\]
where $\ell(p)=\sqrt{1+|p|^2}$. 
Note that $\ell$ is a smooth Lipschitz function on $\RR^K$ and is strongly concave.
We have $ M \leq M_\varepsilon \leq M + \varepsilon C$ with $C=\sup_{p \in \Delta(K)}|\ell(p)|$. 
Let 
\[p_\varepsilon \in \argmax_{p \in \Delta(K)} \; W_2(p)-(W_1(p)-\varepsilon \ell(p) ).\] 
We claim that $p_\varepsilon$ is an exposed point of $W_2$. 
Note that by definition of $M_\varepsilon$, we have:
\[ \forall p\in \Delta(K), \; W_1(p)-\varepsilon \ell(p)+M_\varepsilon \geq W_2(p).\]
Let $y_\varepsilon \in \partial^+W_1(p_\varepsilon)$, then 
\[  \forall p\in \Delta(K), \; W_1(p_\varepsilon)+\langle y_\varepsilon, p-p_\varepsilon \rangle \geq W_1(p) .\]
We deduce that
\begin{equation}\label{ineq_exposed} \forall p\in \Delta(K), \; \phi_\varepsilon(p):=
W_1(p_\varepsilon)+\langle y_\varepsilon, p-p_\varepsilon \rangle-\varepsilon \ell(p)+M_\varepsilon \geq W_2(p).
\end{equation}
Note that $\phi_\varepsilon(p_\varepsilon)=W_2(p_\varepsilon)$ and that $\phi_\varepsilon$  is a smooth strongly concave function. Therefore, $p_\varepsilon$ is an exposed point of $W_2$.
Applying \eqref{sup_var} and \eqref{sub_var} at $p_\varepsilon$, we obtain
\[r W_1(p_\varepsilon) \geq  \inf_{v \in V} \;\sup_{\mu \in \Delta(U)} \; \vec D W_1 \left(p_\varepsilon; \int_U \tr R(u,v) d\mu(u) p_\varepsilon \right) + \int_U rg(p_\varepsilon,u,v)d\mu(u) ,\]
\[  r W_2(p_\varepsilon) \leq  \inf_{v \in V} \; \sup_{\mu \in \Delta(U)} \; \vec D W_2 \left(p_\varepsilon; \int_U \tr R(u,v) d\mu(u) p_\varepsilon \right) + \int_U rg(p_\varepsilon,u,v)d\mu(u).\]
We deduce that
\begin{align*}
rM_\varepsilon-r\varepsilon \ell(p_\varepsilon) &=  r(W_2(p_\varepsilon)-W_1(p_\varepsilon)) \\ 
&\leq \inf_{v \in V} \; \sup_{\mu \in \Delta(U)} \; \vec D W_2 \left(p_\varepsilon; \int_U \tr R(u,v) d\mu(u) p_\varepsilon \right) + \int_U rg(p_\varepsilon,u,v)d\mu(u) \\
& \quad -  \inf_{v \in V} \;\sup_{\mu \in \Delta(U)} \; \vec D W_1 \left(p_\varepsilon; \int_U \tr R(u,v) d\mu(u) p_\varepsilon \right) + \int_U r g(p_\varepsilon,u,v)d\mu(u) .
\end{align*}
Let $v_\varepsilon \in V$ such that
\begin{multline*}
\sup_{\mu \in \Delta(U)} \; \vec D W_1 \left(p_\varepsilon; \int_U \tr R(u,v_\varepsilon) d\mu(u) p_\varepsilon \right) + \int_U rg(p_\varepsilon,u,v_\varepsilon)d\mu(u) \\
\leq
\inf_{v \in V} \; \sup_{\mu \in \Delta(U)} \; \vec D W_1 \left(p_\varepsilon; \int_U \tr R(u,v) d\mu(u) p_\varepsilon \right) + \int_U rg(p_\varepsilon,u,v)d\mu(u)+\varepsilon.
\end{multline*}
Choose then $\mu_\varepsilon \in \Delta(U)$ such that
\begin{multline*}
\vec D W_2 \left(p_\varepsilon; \int_U \tr R(u,v_\varepsilon) d\mu_\varepsilon(u) p_\varepsilon \right) + \int_U rg(p_\varepsilon,u,v_\varepsilon)d\mu_\varepsilon(u) \\
 \geq \sup_{\mu \in \Delta(U)} \; \vec D W_2 \left(p_\varepsilon; \int_U \tr R(u,v_\varepsilon) d\mu(u) p_\varepsilon \right) + \int_U rg(p_\varepsilon,u,v_\varepsilon)d\mu(u)-\varepsilon.
\end{multline*}
We obtain
\begin{align*}
rM_\varepsilon-r\varepsilon \ell(p_\varepsilon) &\leq\  \vec D W_2 \left(p_\varepsilon; \int_U \tr R(u,v_\varepsilon) d\mu_\varepsilon(u) p_\varepsilon \right) + \int_U rg(p_\varepsilon,u,v_\varepsilon)d\mu_\varepsilon(u) \\
& \quad -  \vec D W_1 \left(p_\varepsilon; \int_U \tr R(u,v_\varepsilon) d\mu_\varepsilon(u) p_\varepsilon \right)- \int_U rg(p_\varepsilon,u,v_\varepsilon)d\mu_\varepsilon(u)+2\varepsilon \\
&= \vec D W_2 \left(p_\varepsilon; \int_U \tr R(u,v_\varepsilon) d\mu_\varepsilon(u) p_\varepsilon \right)- \vec D W_1 \left(p_\varepsilon; \int_U \tr R(u,v_\varepsilon) d\mu_\varepsilon(u) p_\varepsilon \right) +2\varepsilon.
\end{align*}
Define $z_\varepsilon = \int_U \tr R(u,v_\varepsilon) d\mu_\varepsilon(u) p_\varepsilon \in T_{\Delta(K)}(p_\varepsilon)$ and note that $|z_\varepsilon| \leq C'$ for some constant $C'$ since $R$ is bounded.
Choose $\varepsilon$ sufficiently small so that 
\[ rM_\varepsilon-r\varepsilon \ell(p_\varepsilon) - 2\varepsilon -\varepsilon C '>0.\]
The map $ W_2(p_\varepsilon+tz_\varepsilon ) - (W_1(p_\varepsilon+tz_\varepsilon )- \varepsilon \ell(p_\varepsilon+tz_\varepsilon ))$ admits a right-derivative at $t=0$ equal to 
\begin{align*}
 \vec D W_2 (p_\varepsilon; z_\varepsilon)- \vec D W_1 (p_\varepsilon; z_\varepsilon) + \varepsilon \langle \nabla \ell(p_\varepsilon), z_\varepsilon \rangle &\geq  \vec D W_2 (p_\varepsilon; z_\varepsilon)- \vec D W_1 (p_\varepsilon; z_\varepsilon) -\varepsilon C'\\ 
 &\geq rM_\varepsilon-r\varepsilon \ell(p_\varepsilon) - 2\varepsilon -\varepsilon C'  \\
 & >0.
\end{align*}
This inequality contradicts the definition of $p_\varepsilon$ which concludes the proof.  
\end{proof}

\section{Existence of the value}\label{section_DPP}

This section is devoted to the proof of Theorems \ref{th:viscosity} and \ref{th:variational}. The proof is divided in two parts: At first we prove that $W^-$ is a viscosity supersolution of \eqref{eq:HJB}, which implies that $W^-$ satisfies \eqref{sup_var} thanks to Proposition \ref{prop:equiv_super}. Then, as in Cardaliaguet \cite{cardadiff}, we prove that $W^+$ satisfies \eqref{sub_var} through the analysis of its concave conjugate, which may be interpreted as the value of a dual game as introduced by De Meyer \cite{demeyerdual}. Using Proposition \ref{prop:equiv_sub}, this implies that $W^+$ is a viscosity subsolution of \eqref{eq:HJB}. Thanks to Proposition \ref{prop:comparison}, we conclude that that $W^-=W^+$ and that $W$ is the unique Lipschitz viscosity solution of \eqref{eq:HJB} and the unique concave Lipschitz function satisfying \eqref{sup_var} and \eqref{sub_var}.

\subsection{Proof of the supersolution property}

In this subsection, we prove $W^-$ satisfies a super dynamic programming inequality in Proposition \ref{superDPP} and we deduce that $W^-$ is a viscosity supersolution of \eqref{eq:HJB} in Proposition \ref{prop:viscosup}.

Let $\Sigma^* \subset \Sigma$ be the set of pure strategies which do not depend on the trajectory $(X_t)_{t \geq 0}$.

\begin{proposition}\label{superDPP}
For all $\varepsilon>0$
\begin{equation}\label{supDPP}
W^{-}(p) \geq \sup_{\alpha \in \Sigma^*} \inf_{\beta \in \CT} \EE_p^{\alpha,\beta}[\int_0^\varepsilon re^{-rs}g(X_s,u^{\alpha,\beta}_s,v^{\alpha,\beta}_s)ds + e^{-r\varepsilon}W^{-}(\pi^{\alpha,\beta}_{\varepsilon})],
\end{equation}
where $\pi^{\alpha,\beta}_{\varepsilon}= e^{\int_0^\varepsilon \tr R(u^{\alpha,\beta}_s,v^{\alpha,\beta}_s)ds }p$.
\end{proposition}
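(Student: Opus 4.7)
My plan is to carry out the standard dynamic programming decomposition: concatenate the given $\alpha^* \in \Sigma^*$ on $[0,\varepsilon]$ with a measurably selected, approximately optimal continuation from time $\varepsilon$ onwards. By Lemma \ref{lem:properties} the infimum on the right-hand side can be taken over pure strategies $\beta \in \CT$. The decisive structural observation, specific to $\alpha^* \in \Sigma^*$, is that the pair of controls $(u^{\alpha^*,\beta},v^{\alpha^*,\beta})|_{[0,\varepsilon]}$ is then deterministic on $\Omega$; by Lemma \ref{girsanov}(b) the marginal law of $X_\varepsilon$ under $\PP^{\alpha^*,\beta}_p$ is exactly the deterministic vector $\pi^{\alpha^*,\beta}_\varepsilon$, which is what allows one to ``insert'' the value function at a deterministic posterior.

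Fix $\alpha^* \in \Sigma^*$ and $\delta>0$. Since both $W^-$ and $p'\mapsto J(p',\alpha,\beta)$ are $\sqrt{|K|}$-Lipschitz (Lemma \ref{lem:properties} and its proof), partition $\Delta(K)$ into finitely many Borel sets $B_1,\dots,B_N$ of diameter at most $\delta/\sqrt{|K|}$ with reference points $p^i \in B_i$, and choose $\alpha_i \in \widehat\Sigma$ with $\inf_{\beta \in \CT} J(p^i,\alpha_i,\beta) \geq W^-(p^i)-\delta$; it follows that $\inf_{\beta \in \CT} J(p',\alpha_i,\beta) \geq W^-(p')-3\delta$ for every $p' \in B_i$. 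Using Lemma \ref{lem:grids}, equip $\alpha^*$ and each $\alpha_i$ with a common grid $T$ containing $\varepsilon$. Define the composite mixed strategy $\tilde\alpha$ on $\prod_{i=1}^N M_{\alpha_i}$ with product measure: on $[0,\varepsilon]$ it plays $\alpha^*$; the observed $v|_{[0,\varepsilon]}$ determines $u|_{[0,\varepsilon]}=\alpha^*(v)$ and hence $\pi^{\alpha^*,\beta}_\varepsilon$, from which one reads off the Borel index $i = i(v|_{[0,\varepsilon]})$ of the ball containing $\pi^{\alpha^*,\beta}_\varepsilon$; on $(\varepsilon,\infty)$ it plays the time-shift of $\alpha_i$ driven by the $i$-th randomization coordinate and the shifted histories. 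This $\tilde\alpha$ is a valid element of $\widehat\Sigma$ with grid $T$.

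Now split $J(p,\tilde\alpha,\beta)$ at time $\varepsilon$ for $\beta \in \CT$. On $[0,\varepsilon]$ the joint law of $(X,u,v)$ under $\PP^{\tilde\alpha,\beta}_p$ coincides with that under $\PP^{\alpha^*,\beta}_p$. For the continuation, condition on $\CF^X_\varepsilon$ and invoke Lemma \ref{girsanov}(c): the shifted process $X_{\varepsilon+\cdot}$ has conditional law $\PP^{\bar u,\bar v}_{\delta_{X_\varepsilon}}$, where $\bar u,\bar v$ are the controls of the shifted $\alpha_{i(v|_{[0,\varepsilon]})}$ played against the shifted continuation $\bar\beta \in \CT$ of $\beta$ (obtained by prepending $u|_{[0,\varepsilon]}$ to its argument). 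Averaging over $X_\varepsilon \sim \pi^{\alpha^*,\beta}_\varepsilon$ and over the randomization seeds via Lemma \ref{girsanov}(a), the expected continuation contribution equals $J(\pi^{\alpha^*,\beta}_\varepsilon,\alpha_i,\bar\beta)$, which is at least $W^-(\pi^{\alpha^*,\beta}_\varepsilon)-3\delta$ by the $3\delta$-optimality of $\alpha_i$ on $B_i$. Since $\pi^{\alpha^*,\beta}_\varepsilon$ is deterministic, combining the two pieces and using $W^-(p) \geq \inf_{\beta \in \CT} J(p,\tilde\alpha,\beta)$, then letting $\delta\downarrow 0$, yields the claim. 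I expect the delicate step to be the reduction of the continuation expectation to $J(\pi^{\alpha^*,\beta}_\varepsilon,\alpha_i,\bar\beta)$: the Player 1 continuation is history-dependent through $i(\cdot)$, so one has to justify carefully, via Lemma \ref{girsanov}(c) applied conditionally on $v|_{[0,\varepsilon]}$, that after averaging out $X_\varepsilon$ the composite game from time $\varepsilon$ behaves as an ordinary game started at the deterministic posterior $\pi^{\alpha^*,\beta}_\varepsilon$ against the pure continuation $\bar\beta$.
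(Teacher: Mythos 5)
Your proposal is correct and follows essentially the same route as the paper's proof: a composite mixed strategy that plays $\alpha^*\in\Sigma^*$ on $[0,\varepsilon]$ and then switches to a near-optimal continuation indexed by the cell of a finite partition of $\Delta(K)$ containing the (deterministic, $v$-measurable) posterior $\pi^{\alpha^*,\beta}_\varepsilon$, combined with Lemma \ref{girsanov} (a)--(c) to identify the conditional law of the shifted process and the marginal $\EE[\delta_{X_\varepsilon}]=\pi^{\alpha^*,\beta}_\varepsilon$, and the Lipschitz property of $W^-$ and $J$ to control the discretization error. The key structural point you single out --- that $\pi^{\alpha^*,\beta}_\varepsilon$ does not depend on $\omega$ because $\alpha^*$ ignores the trajectory --- is exactly the observation the paper relies on.
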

\begin{proof}
Let $\delta>0$ and $\alpha_0\in \Sigma^*$ such that 
\begin{multline*}
\inf_{\beta \in \CT} \EE_p^{\alpha_0,\beta}[\int_0^\varepsilon re^{-rs}g(X_s,u^{\alpha_0,\beta}_s,v^{\alpha_0,\beta}_s)ds + e^{-r\varepsilon}W^{-}(\pi^{\alpha_0,\beta}_{\varepsilon})] \\
 \geq \sup_{\alpha \in \Sigma^*} \inf_{\beta \in \CT} \EE_p^{\alpha,\beta}[\int_0^\varepsilon re^{-rs}g(X_s,u^{\alpha,\beta}_s,v^{\alpha,\beta}_s)ds + e^{-r\varepsilon}W^{-}(\pi^{\alpha,\beta}_{\varepsilon})] - \delta
\end{multline*}
Let $(A_m)_{m=1,...,N}$ be a measurable partition of $\Delta(K)$ of mesh smaller than $\delta$ and for all $m=1,...,N$, let $p_m \in A_m$. 
For all $m=1,...,N$, let $\alpha_m \in\widehat{\Sigma}$ such that 
\[ \inf_{\beta \in \CT} J(p_m,\alpha_m,\beta) \geq W^-(p_m) - \delta. \]
\p
Define $\bar\alpha \in \widehat\Sigma$ with probability space $(M_{\bar\alpha},\CA_{\bar\alpha}, \lambda_{\bar\alpha})=(\prod_{m=1}^N M_{\alpha_m}, \bigotimes_{m=1}^N\CA_{\alpha_m},\bigotimes_{m=1}^N \lambda_{\alpha_m})$ by the formula:  $\forall (\xi_{\bar\alpha},\omega,v,t) \in M_{\bar\alpha} \times \Omega \times \CV \times [0,\infty)$,
\[ \bar\alpha(\xi_{\bar\alpha},\omega,v)_t= \indic_{[0,\varepsilon]}(t)\alpha_0(v)_t + \indic_{(\varepsilon,\infty)}(t) \sum_{m=1}^N \indic_{A_m}(\Pi^{\alpha_0}_\varepsilon(v)) \alpha_m(\xi_{\alpha_m}, \omega|_{[\varepsilon,\infty)}, v|_{[\varepsilon,\infty)})_{t-\varepsilon}.\]
where 
\[\Pi^{\alpha_0}_\varepsilon(v)= e^{\int_0^\varepsilon \tr R(\alpha_0(v)_s,v_s)ds }p.\]
$\bar\alpha$ is therefore a well-defined strategy in $\widehat{\Sigma}$.
Let $\beta \in \CT$. Note that by construction, we have:
\[\pi_{\varepsilon}^{\bar\alpha,\beta}= \Pi^{\alpha_0}_\varepsilon(v^{\alpha_0,\beta}).\]
Define $h_{\varepsilon}=u^{\bar\alpha,\beta}|_{[0,\varepsilon]}=u^{\alpha_0,\beta}|_{[0,\varepsilon]}$, and note that $h_\varepsilon$ and $\pi_{\varepsilon}^{\bar\alpha,\beta}$ do not depend on $(\omega,\xi_{\bar \alpha})$.
\p
Let $T'$ denote the grid of $\beta$. Thanks to Lemma \ref{lem:grids}, we may  assume that $\varepsilon=t'_n$ for some integer $n$.
For all $u \in \CU_\varepsilon$, define the continuation strategy $\beta^\varepsilon(u) \in \CT$ by 
\[ \forall u' \in \CU, \; \beta^{\varepsilon}(u)(u')_t=\sum_{p \geq n} \indic_{(t'_p,t'_{p+1}]}(t+\varepsilon) \beta((u\oplus_\varepsilon u'))_{t+\varepsilon},\]
where 
\[ \forall (u,u') \in \CU_\varepsilon \times \CU, \; (u\oplus_\varepsilon u')_t = \indic_{[0,\varepsilon]}(t)u_t +\indic_{(\varepsilon,+\infty)}(t)u'_{t-\varepsilon}.\]
Note first that by construction, we have the identity 
\begin{equation}\label{eq:identic_strat}
(u^{\bar\alpha,\beta}_{\varepsilon+s},v^{\bar\alpha,\beta}_{\varepsilon+s})(\xi_{\bar\alpha},\omega) = \sum_{m=1}^N \indic_{\pi^{\bar\alpha,\beta}_\varepsilon \in A_m} (u^{\alpha_m,\beta^\varepsilon(h_\varepsilon)}_s,v^{\alpha_m,\beta^\varepsilon(h_\varepsilon)}_s) (\xi_{\alpha_m},\omega|_{[\varepsilon,\infty)}).
\end{equation}
Applying Lemma \ref{girsanov}, a version of the conditional law of $(X_{\varepsilon+s})_{s \geq 0}$ given $(\xi_{\bar\alpha}, X|_{[0,\varepsilon]})$ is 
\[ \sum_{m=1}^N \indic_{\pi^{\bar\alpha,\beta}_\varepsilon \in A_m} \PP_{\delta_{X_\varepsilon}}^{\alpha_m(\xi_{\alpha_m}), \beta^\varepsilon(h_\varepsilon)}.\]
Using this fact together with \eqref{eq:identic_strat}, we have for all $\beta \in \CT$
\begin{align*}
J(p,\bar\alpha,\beta)&= \EE_p^{\bar\alpha,\beta}[\int_0^\varepsilon re^{-rs}g(X_s,u^{\bar\alpha,\beta}_s,v^{\bar\alpha,\beta}_s)ds+ e^{-r \varepsilon}\int_0^\infty re^{-rs}g(X_{\varepsilon+s},u^{\bar\alpha,\beta}_{\varepsilon+s},v^{\bar\alpha,\beta}_{\varepsilon+s})ds ] \\
&=\EE_p^{\bar\alpha,\beta}[\int_0^\varepsilon re^{-rs}g(X_s,u^{\bar\alpha,\beta}_s,v^{\bar\alpha,\beta}_s)ds+ e^{-r \varepsilon}\EE_p^{\bar\alpha,\beta}[\int_0^\infty re^{-rs}g(X_{\varepsilon+s},u^{\bar\alpha,\beta}_{\varepsilon+s},v^{\bar\alpha,\beta}_{\varepsilon+s})ds | \xi_{\bar\alpha},X|_{[0,\varepsilon]} ] ]\\
&=\EE_p^{\bar\alpha,\beta}[\int_0^\varepsilon re^{-rs}g(X_s,u^{\bar\alpha,\beta}_s,v^{\bar\alpha,\beta}_s)ds \\
&\qquad + e^{-r \varepsilon}\sum_m \indic_{\pi^{\bar\alpha,\beta}_\varepsilon \in A_m}\EE_{\delta_{X_\varepsilon}}^{\alpha^{m}(\xi_{\alpha^m}),\beta^\varepsilon(h_\varepsilon)}[\int_0^\infty re^{-rs}g(\bar X_{s},u^{\alpha^m,\beta^\varepsilon(h_\varepsilon)}_{s}(\bar X),v^{\alpha^m,\beta^\varepsilon(h_\varepsilon)}_{s}(\bar X))ds  ]],
\end{align*}
where the canonical process was denoted $\bar X$ in the last expectation to avoid confusions.
Recall that that the process 
$(e^{-\int_0^t \tr R(u^{\bar\alpha,\beta}_s,v^{\bar\alpha,\beta}_s)ds}\delta_{X_t})_{t \geq 0}$  is a $(\PP^{\bar\alpha(\xi_{\bar\alpha}),\beta}_p, \FF^X)$ martingale (see Lemma \ref{girsanov}), which implies:
\begin{equation}\label{eq:marginal}
\EE_p^{\bar\alpha(\xi_{\bar\alpha}),\beta} [ e^{-\int_0^\varepsilon \tr R(u^{\bar\alpha,\beta}_s,v^{\bar\alpha,\beta}_s)ds} \delta_{X_\varepsilon}]=p \;\Longrightarrow\; \EE_p^{\bar\alpha(\xi_{\bar\alpha}),\beta} [ \delta_{X_{\varepsilon}}] = \pi^{\bar\alpha,\beta}_{\varepsilon}.
\end{equation}
We deduce that
\begin{align*}
J(p,\bar\alpha,\beta)&= \EE_p^{\bar\alpha,\beta}[\int_0^\varepsilon re^{-rs}g(X_s,u^{\bar\alpha,\beta}_s,v^{\bar\alpha,\beta}_s)ds \\
&\qquad + e^{-r \varepsilon}\sum_m \indic_{\pi^{\bar\alpha,\beta}_\varepsilon \in A_m}\EE_{\pi^{\bar\alpha,\beta}_\varepsilon}^{\alpha^{m}(\xi_{\alpha^m}),\beta^\varepsilon(h_\varepsilon)}[\int_0^\infty re^{-rs}g(\bar X_{s},u^{\alpha^m,\beta^\varepsilon(h_\varepsilon)}_{s}(\bar X),v^{\alpha^m,\beta^\varepsilon(h_\varepsilon)}_{s}(\bar X))ds  ]]\\
&=\EE_p^{\bar\alpha,\beta}[\int_0^\varepsilon re^{-rs}g(X_s,u^{\bar\alpha,\beta}_s,v^{\bar\alpha,\beta}_s)ds+ e^{-r \varepsilon}\sum_m \indic_{\pi^{\bar\alpha,\beta}_\varepsilon \in A_m} J(\pi^{\bar\alpha,\beta}_\varepsilon, \alpha^{m},\beta^\varepsilon(h_\varepsilon)) ]\\
&\geq \EE_p^{\bar\alpha,\beta}[\int_0^\varepsilon re^{-rs}g(X_s,u^{\bar\alpha,\beta}_s,v^{\bar\alpha,\beta}_s)ds+ e^{-r \varepsilon}\sum_m \indic_{\pi^{\bar\alpha,\beta}_\varepsilon \in A_m} J(p_m, \alpha^{m},\beta^\varepsilon(h_\varepsilon)) ] - \sqrt{|K|}\delta  \\
&\geq \EE_p^{\bar\alpha,\beta}[\int_0^\varepsilon re^{-rs}g(X_s,u^{\bar\alpha,\beta}_s,v^{\bar\alpha,\beta}_s)ds+ e^{-r \varepsilon}\sum_m \indic_{\pi^{\bar\alpha,\beta}_\varepsilon \in A_m} W^-(p_m) ] - \sqrt{|K|}\delta- \delta \\
& \geq \EE_p^{\bar\alpha,\beta}[\int_0^\varepsilon re^{-rs}g(X_s,u^{\bar\alpha,\beta}_s,v^{\bar\alpha,\beta}_s)ds+ e^{-r \varepsilon} W^-(\pi^{\bar\alpha,\beta}_\varepsilon) ] - 2\sqrt{|K|}\delta- \delta 
\end{align*}
We conclude that 
\begin{align*}
W^+(p) &\geq \inf_{\beta \in \CT}  J(p,\bar\alpha,\beta) \\
& \geq \inf_{\beta \in \CT}\EE_p^{\bar\alpha,\beta}[\int_0^\varepsilon re^{-rs}g(X_s,u^{\bar\alpha,\beta}_s,v^{\bar\alpha,\beta}_s)ds+ e^{-r \varepsilon} W^-(\pi^{\bar\alpha,\beta}_\varepsilon) ] - 2\sqrt{|K|}\delta- \delta  \\
& \geq \sup_{\alpha \in \Sigma^*} \inf_{\beta \in \CT} \EE_p^{\alpha,\beta}[\int_0^\varepsilon re^{-rs}g(X_s,u^{\alpha,\beta}_s,v^{\alpha,\beta}_s)ds + e^{-rh}W^{-}(\pi^{\alpha,\beta}_{\varepsilon})] - 2\sqrt{|K|}\delta- 2\delta ,
\end{align*}
and the result follows by sending $\delta$ to zero.
\end{proof}

\begin{proposition}\label{prop:viscosup} 
$W^-$ is a viscosity supersolution of \eqref{eq:HJB}.
\end{proposition}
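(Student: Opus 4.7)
The supersolution property reduces to establishing, at any $p^*\in\Delta(K)$ where $\varphi-W^-$ attains a global maximum, the two inequalities (i) $r\varphi(p^*)\geq H(p^*,\nabla\varphi(p^*))$, and (ii) $\lambda_{\max}(p^*,D^2\varphi(p^*))\leq 0$. I would treat (ii) first since it is essentially free, then use the super-dynamic-programming inequality of Proposition \ref{superDPP} for (i).

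For (ii), after a harmless vertical shift I may assume $\varphi\leq W^-$ on $\Delta(K)$ with equality at $p^*$. For any $z\in TS_{\Delta(K)}(p^*)$, pick $\varepsilon>0$ small enough that $p^*\pm\varepsilon z\in\Delta(K)$. Using the concavity of $W^-$ from Lemma \ref{lem:properties},
\[
\varphi(p^*+\varepsilon z)+\varphi(p^*-\varepsilon z)-2\varphi(p^*) \leq W^-(p^*+\varepsilon z)+W^-(p^*-\varepsilon z)-2W^-(p^*) \leq 0.
\]
Dividing by $\varepsilon^2$ and sending $\varepsilon\to 0$ yields $\langle D^2\varphi(p^*)z,z\rangle\leq 0$, i.e. $\lambda_{\max}(p^*,D^2\varphi(p^*))\leq 0$.

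For (i), I apply Proposition \ref{superDPP} with a small $\varepsilon>0$; using $\varphi\leq W^-$ and the fact that $\pi^{\alpha,\beta}_\varepsilon\in\Delta(K)$ (the dual exponential propagator preserves the simplex) gives
\[
\varphi(p^*) \geq \sup_{\alpha\in\Sigma^*}\inf_{\beta\in\CT}\EE_{p^*}^{\alpha,\beta}\!\left[\int_0^\varepsilon re^{-rs}g(X_s,u^{\alpha,\beta}_s,v^{\alpha,\beta}_s)\,ds + e^{-r\varepsilon}\varphi(\pi^{\alpha,\beta}_\varepsilon)\right].
\]
For each $u\in U$, I specialize $\alpha$ to the strategy $\alpha_u\in\Sigma^*$ playing the constant control $u$. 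Since $\alpha_u$ ignores $\omega$, the process $v^{\alpha_u,\beta}$ is a deterministic piecewise-constant map $(0,\varepsilon]\to V$, and I denote its time-average measure by $\mu_\beta\in\Delta(V)$. Boundedness of $R$ and $g$, smoothness of $\varphi$, and the martingale identity of Lemma \ref{girsanov} yield the uniform Taylor expansion
\[
\EE_{p^*}^{\alpha_u,\beta}\!\left[\textstyle\int_0^\varepsilon re^{-rs}g\,ds + e^{-r\varepsilon}\varphi(\pi^{\alpha_u,\beta}_\varepsilon)\right] = \varphi(p^*) + \varepsilon\!\int_V F(u,v)\,d\mu_\beta(v) + o(\varepsilon),
\]
where $F(u,v):=rg(p^*,u,v)-r\varphi(p^*)+\langle\nabla\varphi(p^*),\tr R(u,v)p^*\rangle$. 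Since this bracketed expression is affine in $\mu_\beta$, one has $\inf_\beta\int F\,d\mu_\beta \geq \inf_{\mu\in\Delta(V)}\int F\,d\mu=\inf_{v\in V}F(u,v)$. Substituting, dividing by $\varepsilon$, sending $\varepsilon\to 0$, and invoking Isaacs gives $0\geq\sup_u\inf_v F(u,v) = H(p^*,\nabla\varphi(p^*))-r\varphi(p^*)$, which is (i).

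The main delicate point is ensuring that the Taylor remainders are uniform in $(u,\beta)$ so that the $o(\varepsilon)$ terms survive the $\sup\inf$; this rests on the boundedness of $R$ and $g$ and on the smoothness of $\varphi$ on a neighborhood of $p^*$. The affine-in-$\mu_\beta$ reduction from $\inf_\beta$ to $\inf_{v}$ is what allows the set $V$ rather than $\Delta(V)$ to appear in the final Hamiltonian.
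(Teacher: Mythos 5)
Your proof is correct and follows essentially the same route as the paper: both rest on the super-dynamic-programming inequality of Proposition \ref{superDPP} applied to constant strategies of player 1, a first-order Taylor expansion of the test function along the deterministic flow $\pi^{\alpha,\beta}_s$ combined with the martingale identity $\EE_p^{\alpha,\beta}[\delta_{X_s}]=\pi^{\alpha,\beta}_s$, and Isaacs' condition (the paper argues by contradiction with a fixed $u_0$ and a margin $\delta$, which is just the contrapositive of your direct estimate $0\geq \inf_v F(u,v)$ for each $u$). Your explicit verification of $\lambda_{\max}(p^*,D^2\varphi(p^*))\leq 0$ via the concavity of $W^-$ is a welcome addition, as the paper leaves this half of the supersolution inequality implicit, relying on the concavity established in Lemma \ref{lem:properties}.
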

\begin{proof}
Assume that the property does not hold. Then there exist  $p \in \Delta(K)$ and $\phi$ a smooth test function such that $\phi \leq W^-$ on $\Delta(K)$, $\phi(p)=W^-(p)$ and   
\[r \phi(p) < H(p,\nabla\phi(p))=\sup_{u \in U} \; \inf_{v \in V} \; \langle \nabla \phi(p),\tr R(u,v) p \rangle + rg(p,u,v). \]
Therefore there exist $u_0$ and $\delta>0$ such that for all $v\in V$ 
\[r \phi(p) \leq   \langle \nabla\phi(p),\tr R(u_0,v) p \rangle + rg(p,u_0,v)  - \delta\]
Let $\alpha_0 \in \Sigma^*$ be the strategy which plays the constant control $u_0$ so that for all $\beta \in \CT$, $(u^{\alpha_0,\beta},v^{\alpha_0,\beta})=(u_0,\beta(u_0))$.
Applying \eqref{supDPP}, we have  
\begin{equation}
W^{-}(p) \geq \inf_{\beta \in \CT} \EE_p^{\alpha_0,\beta}[\int_0^\varepsilon re^{-rs}g(X_s,u_0,\beta(u_0)_s)ds + e^{-r\varepsilon}W^{-}(\pi_{\varepsilon}^{\alpha_0,\beta})] ,
\end{equation}
which implies 
\begin{equation}
\phi(p) \geq \inf_{\beta \in \CT}\EE_p^{\alpha_0,\beta}[\int_0^\varepsilon re^{-rs}g(X_s,u_0,\beta(u_0)_s)ds + e^{-r\varepsilon}\phi(\pi_{\varepsilon}^{\alpha_0,\beta_\varepsilon})],
\end{equation}
and thus
\[(1-e^{-r \varepsilon})\phi(p) \geq \inf_{\beta \in \CT} \EE_p^{u_0,\beta}[\int_0^\varepsilon re^{-rs}g(X_s,u_0,\beta(u_0)_s)ds + e^{-r\varepsilon}(\phi(\pi^{\alpha_0,\beta}_{\varepsilon})-\phi(p))]. \]
Let $ \pi^{\alpha_0,\beta}_s= e^{\int_0^s \tr R(u_0,\beta(u_0)_s)ds}p$ for all $s\in [0,\varepsilon]$. Since $\phi$ is smooth and $R$ is bounded, there exists a constant $C$ such that for all $\beta \in \CT$ 
\begin{align*}
\phi(\pi^{\alpha_0,\beta}_{\varepsilon})-\phi(p) &= \int_0^\varepsilon \langle \nabla \phi(\pi^{\alpha_0,\beta}_s), \tr R(u_0,\beta(u_0)_s) \pi^{\alpha_0,\beta}_s \rangle ds  \\
&\geq \int_0^\varepsilon \langle \nabla \phi(p), \tr R(u_0,\beta(u_0)_s) p \rangle ds - C\varepsilon^2 .\end{align*}
Lemma \ref{girsanov} implies that
\[ \EE_p^{\alpha_0,\beta}[e^{-\int_0^s \tr R(u_0,\beta(u_0)_s)ds} \delta_{X_s}] =p \; \Longrightarrow \EE_p^{\alpha_0,\beta}[ \delta_{X_s}] =\pi^{\alpha_0,\beta}_s.\] 
Using that $g$ is bounded and Lipschitz with respect to $p$, there exists a constant $C'$ such that
\begin{align*}
\EE_p^{\alpha_0,\beta}[\int_0^\varepsilon re^{-rs}g(X_s,u_0,\beta(u_0)_s)ds]&= 
\int_0^\varepsilon re^{-rs}g(\pi^{\alpha_0,\beta}_s,u_0,\beta(u_0)_s)ds \\
&\geq re^{-r \varepsilon}\int_0^\varepsilon g(p,u_0,\beta(u_0)_s)ds - C'\varepsilon^2
\end{align*}
We deduce that
\begin{align*}
(1-e^{-r \varepsilon})\phi(p) &\geq \inf_{\beta \in \CT}  e^{-r\varepsilon}(\int_0^\varepsilon \langle \nabla \phi(p), \tr R(u_0,\beta(u_0)_s) p \rangle ds +rg(p,u_0,\beta(u_0)_s)) - (C+C') \varepsilon^2 \\
&\geq e^{-r\varepsilon}\varepsilon(r\phi(p)+\delta) - (C+C') \varepsilon^2. 
\end{align*}
Dividing by $\varepsilon$ and sending $\varepsilon$ to zero, we obtain a contradiction and this concludes the proof.
\end{proof}

\subsection{Proof of the subsolution property}

This section is devoted to the proof that $W^+$ satisfies \eqref{sub_var}.
To this end, we consider the concave conjugate defined by
\[ \forall x \in \RR^K, \;  W^{+,*}(x)=\inf_{p \in \Delta(K)} \langle x,p \rangle - W^+(p).\]
In Proposition \ref{prop:dualDPP}, we will prove that $W^{+,*}$ satisfies a dynamic programming inequality and in Proposition \ref{prop:dualvisco}, we will prove that this implies that $W^{+,*}$ is a viscosity supersolution of the following dual equation for $x\in \RR^K$:
\begin{equation} \label{eq:dualHJB}
r f(x)+H(\nabla f(x),x) -r \langle \nabla f(x),x \rangle       \geq 0.
\end{equation}
Note that for the above equation to be well-defined, the definition of $H$ has to be extended to $\RR^K\times \RR^K$, for example by letting
\[ \forall (p,x) \in \RR^{K}\times \RR^{K}, \;H(p,x)= \inf_{v \in V} \;\sup_{u \in U} \; \langle \tr R(u,v) p, z\rangle + rg(p,u,v),\] 
where $g(p,u,v)=\sum_{k \in K}p_k g(k,u,v)$. Note that using the same arguments as in Lemma \ref{lem:properties}, $H$ is locally Lipschitz with respect to both variables.
\p
Let us recall the precise definition of a viscosity supersolution of \eqref{eq:dualHJB}.
\begin{definition}
A function $f:\RR^K\mapsto\RR$ is called a supersolution of \eqref{eq:dualHJB} if it is lower semi-continuous and satisfies: for any smooth test function $\varphi:\RR^K\mapsto\RR$ and $x\in\RR^K$ such that $\varphi-f$ has a global maximum at $x$, we have
\[ r \varphi(x)+H(\nabla \varphi (x),x) -r \langle \nabla \varphi(x),x \rangle       \geq 0.\]
\end{definition}
In Proposition \ref{prop:dualprimal}, we will deduce that $W^+$ satisfies \eqref{sub_var} from the fact that $W^{+,*}$ is a viscosity supersolution of \eqref{eq:dualHJB}. 
\p
We start with an alternative representation for $W^{+,*}$.
\begin{lemma}\label{lem:alternat}
\begin{align*} 
W^{+,*}(x)&=\sup_{\beta \in \widehat{\CT}} \;\inf_{\alpha \in {\Sigma}}\; \inf_{p \in \Delta(K)} \;\langle x,p \rangle -J(p,\alpha,\beta)
\end{align*}
\end{lemma}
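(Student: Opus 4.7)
The plan is to identify both $W^{+,*}(x)$ and the candidate expression
$\sup_{\beta \in \widehat{\CT}}\inf_{\alpha \in \Sigma}\inf_{p \in \Delta(K)}[\langle x,p \rangle - J(p,\alpha,\beta)]$
with the common quantity $\sup_{\beta \in \widehat{\CT}}\min_{k \in K}[x_k - f_k(\beta)]$, where $f_k(\beta) := \sup_{\alpha \in \Sigma} J(\delta_k,\alpha,\beta)$. Two ingredients already prepared in the paper will be used repeatedly: the affine decomposition $J(p,\alpha,\beta) = \sum_k p_k J(\delta_k,\alpha,\beta)$, which follows from the disintegration $\PP_p^{u,v} = \sum_k p_k \PP_{\delta_k}^{u,v}$ of Lemma \ref{girsanov}(1a), and the splitting construction $\bar\alpha(\omega,v) = \sum_k \indic_{\omega(0)=k}\alpha^k(\omega,v)$ used in the proof of concavity of $W^+$ in Lemma \ref{lem:properties}; the fact that $\omega(0) = X_0$ is observed by Player 1 makes $\bar\alpha$ a legitimate pure strategy in $\Sigma$.

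First I would rewrite the left-hand side. Combining $W^+(p) = \inf_{\beta}\sup_{\alpha \in \Sigma} J(p,\alpha,\beta)$ from Lemma \ref{lem:properties} with the splitting argument, which in fact delivers the stronger pointwise identity $\sup_{\alpha \in \Sigma} J(p,\alpha,\beta) = \sum_k p_k f_k(\beta)$, yields the linear representation $W^+(p) = \inf_\beta \sum_k p_k f_k(\beta)$. Consequently,
\[ W^{+,*}(x) = \inf_{p \in \Delta(K)} \sup_{\beta \in \widehat{\CT}}\,\sum_{k \in K} p_k(x_k - f_k(\beta)). \]
The main obstacle is the exchange of $\inf_p$ and $\sup_\beta$ here. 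For this I would apply a Sion/Fan-type minmax theorem: the integrand is affine in $p$ (hence continuous, convex and concave) on the compact convex set $\Delta(K)$, and it is concave-like in $\beta$ in the sense that for any $\beta_1,\beta_2 \in \widehat{\CT}$ and $\lambda \in [0,1]$, the product mixed strategy playing $\beta_i$ independently with probability $\lambda, 1-\lambda$ produces a $\beta \in \widehat{\CT}$ with $J(\delta_k,\alpha,\beta) = \lambda J(\delta_k,\alpha,\beta_1) + (1-\lambda)J(\delta_k,\alpha,\beta_2)$ for every $\alpha$, whence $f_k(\beta) \leq \lambda f_k(\beta_1) + (1-\lambda) f_k(\beta_2)$ for every $k$. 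This abstract convex structure is enough to obtain
\[ W^{+,*}(x) = \sup_{\beta \in \widehat{\CT}} \inf_{p \in \Delta(K)}\,\sum_k p_k(x_k - f_k(\beta)) = \sup_{\beta \in \widehat{\CT}}\min_{k \in K}(x_k - f_k(\beta)), \]
the last equality being the standard fact that a linear function on the simplex attains its minimum at a vertex.

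It then remains to reduce the right-hand side of the lemma to the same expression. For fixed $\beta$ and $\alpha \in \Sigma$ the affinity of $J$ in $p$ gives $\inf_p[\langle x,p \rangle - J(p,\alpha,\beta)] = \min_k[x_k - J(\delta_k,\alpha,\beta)]$. The inequality $\inf_\alpha \min_k \geq \min_k \inf_\alpha = \min_k(x_k - f_k(\beta))$ is immediate; for the converse, given $\varepsilon > 0$ and a collection $(\alpha^k)_{k \in K} \subset \Sigma$ with $\alpha^k$ being $\varepsilon$-optimal for $f_k(\beta)$, the splitting strategy $\bar\alpha(\omega,v) = \sum_k \indic_{\omega(0)=k}\alpha^k(\omega,v)$ satisfies $J(\delta_k,\bar\alpha,\beta) = J(\delta_k,\alpha^k,\beta)$ for every $k$, and letting $\varepsilon \downarrow 0$ provides equality. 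Taking $\sup_\beta$ then finishes the proof. I expect the only genuine technical point to be pinning down a form of the minmax theorem compatible with the abstract (non-topological) convex structure of $\widehat{\CT}$; all other manipulations are bookkeeping using lemmas already in the paper.
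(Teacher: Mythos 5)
Your proposal is correct and follows essentially the same route as the paper: both rest on the decomposition $\sup_{\alpha\in\Sigma}J(p,\alpha,\beta)=\sum_k p_k\sup_{\alpha\in\Sigma}J(\delta_k,\alpha,\beta)$ from Lemma \ref{lem:properties}, the concave-like structure of $\widehat{\CT}$ obtained by mixing $\beta_1,\beta_2$ on a product probability space, and Fan's minmax theorem (which handles exactly the non-topological, concave-like side, resolving your stated worry) to exchange $\inf_p$ and $\sup_\beta$. The only superfluous step is your use of the splitting strategy to justify $\inf_{\alpha}\min_k = \min_k\inf_{\alpha}$, which holds automatically since two infima always commute.
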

\begin{proof}
We consider the map 
\[ \Theta:(p,\beta) \in \Delta(K) \times \widehat\CT \rightarrow \inf_{\alpha \in {\Sigma}}\;\langle x,p \rangle -J(p,\alpha,\beta) .\]
In order to apply Fan's minmax theorem, we first verify that $\Theta$ is affine with respect to $p$ (and thus continuous) on the compact convex set $\Delta(K)$ and 
 that $\Theta$ is concave-like with respect to $\beta$ on the set $\widehat{\CT}$. 
For the first part, recall that for all $\beta \in \widehat{\CT}$
\begin{align*}
\Theta(p,\beta)&=\langle x,p \rangle -\sup_{\alpha \in {\Sigma}}\;J(p,\alpha,\beta) \\
&=\langle x,p \rangle -\sup_{\alpha \in {\Sigma}}\;\sum_{k \in K}p_k J(\delta_k,\alpha,\beta)\\
&=\langle x,p \rangle -\;\sum_{k \in K}p_k \sup_{\alpha \in {\Sigma}}J(\delta_k,\alpha,\beta),
\end{align*}
where the second equality was proved in Lemma \ref{lem:properties}. 
Let $\beta_1,\beta_2 \in \widehat\CT$ and $\lambda \in [0,1]$. Define a strategy  
 $\bar\beta \in \widehat{\CT}$ having for probability space $(M_{\bar\beta},\CA_{\bar\beta},\lambda_{\bar\beta})$ defined by  
\[M_{\bar\beta}=[0,1]\times M_{\beta_1} \times M_{\beta_2}, \; \CA_{\bar\beta}=\CB([0,1])\otimes \CA_{\beta_1}\otimes \CA_{\beta_2}, \; \lambda_{\bar\beta}=\Leb\otimes \lambda_{\beta_2}\otimes \lambda_{\beta_2},\] 
where $\Leb$ denotes the Lebesgue measure on $[0,1]$.
A typical element of $M_\beta$ will be denoted $(\zeta,\xi_{\beta_1},\xi_{\beta_2})$.
The strategy $\bar\beta$ is defined by 
\[\bar\beta(\zeta,\xi_{\beta_1},\xi_{\beta_2})= \indic_{\zeta\leq \lambda} \beta_1(\xi_{\beta_1})+ \indic_{\zeta >\lambda}\beta_1(\xi_{\beta_1}).\]
With this definition, integrating with respect to $\zeta$, we have for all $\alpha \in \Sigma$,
\[ J(p,\alpha,\bar\beta)= \lambda J(p,\alpha,\beta_1) +(1-\lambda)J(p,\alpha,\beta_2).\]
It follows that
\begin{align*}
\Theta(p,\bar \beta)&= \inf_{\alpha \in {\Sigma}}\; \left\{ \lambda (\langle x,p \rangle -J(p,\alpha,\beta_1)) +(1-\lambda)(\langle x,p \rangle -J(p,\alpha,\beta_2))\right\} \\
&\geq \inf_{\alpha \in {\Sigma}}\; \lambda (\langle x,p \rangle -J(p,\alpha,\beta_1)) + \inf_{\alpha \in {\Sigma}}\; (1-\lambda)(\langle x,p \rangle -J(p,\alpha,\beta_2))\\
&=\lambda \Theta(p,\beta_1)+ (1-\lambda)\Theta(p,\beta_2),
\end{align*}
which concludes the proof of the concave-like property.
\p
Fan's minmax theorem (see \cite{fan}) implies
\begin{align*} 
W^{+,*}(x)&=\inf_{p \in \Delta(K)} \langle x,p \rangle - W^+(p) \\
&=\inf_{p \in \Delta(K)}\; \sup_{\beta \in \widehat{\CT}} \;\inf_{\alpha \in {\Sigma}}\;\langle x,p \rangle -J(p,\alpha,\beta) \\
&=\sup_{\beta \in \widehat{\CT}} \;\inf_{\alpha \in {\Sigma}}\; \inf_{p \in \Delta(K)} \;\langle x,p \rangle -J(p,\alpha_r,\beta_r)
\end{align*}
\end{proof}

\begin{proposition}\label{prop:dualDPP}
\begin{align*}
W^{+,*}(x) &\geq  \sup_{\beta\in\CT} \;   \inf_{u \in \CU}\;e^{-r\varepsilon} W^{+,*}(Z^{\beta}_\varepsilon(u))
\end{align*}
where 
\[Z^{\beta}_\varepsilon(u):= e^{r\varepsilon} \left( Y^{\beta}_\varepsilon(u) - \int_0^\varepsilon re^{-rt}G^{\beta}_{t,\varepsilon}(u) dt   \right),\]
\[ Y^\beta_\varepsilon(u) =e^{-\int_0^\varepsilon R(u_s,\beta(u)_s)ds} \; x\]
and for all $t\in [0,\varepsilon]$
\[ G^\beta_{t,\varepsilon} (u) = e^{-\int_t^\varepsilon R(u_s,\beta(u)_s)ds} \; \bar g(u_t,\beta(u)_t)\]
with for all $(a,b)\in U\times V$, $\bar g(a,b)= (g(k,a,b))_{k \in K} \in \RR^K$.
\end{proposition}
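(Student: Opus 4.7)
The plan is to exploit the dual representation from Lemma \ref{lem:alternat} and, for each $\beta_0 \in \CT$, build a mixed strategy $\bar\beta \in \widehat\CT$ that plays $\beta_0$ on $[0,\varepsilon]$ and, after observing $u|_{[0,\varepsilon]}$, plays a near-optimal continuation strategy for the dual problem at the updated state $Z^{\beta_0}_\varepsilon(u|_{[0,\varepsilon]})$. The crucial cancellation will come from the martingale property of Lemma \ref{girsanov}\,(b).

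Fix $\beta_0 \in \CT$ and $\delta>0$. Since $R$ and $\bar g$ are bounded, $Z^{\beta_0}_\varepsilon$ takes values in a bounded subset $B \subset \RR^K$; partition $B$ into measurable sets $(B_m)_{m=1}^N$ of diameter at most $\delta$, pick $z_m \in B_m$, and by Lemma \ref{lem:alternat} select $\beta^m \in \widehat\CT$ such that
\[ \forall \alpha \in \Sigma, \, \forall p' \in \Delta(K), \quad J(p',\alpha,\beta^m) \leq \langle z_m, p'\rangle - W^{+,*}(z_m) + \delta.\]
After passing to a common grid via Lemma \ref{lem:grids}, define $\bar\beta \in \widehat\CT$ on the product of the probability spaces of the $\beta^m$ by $\beta_0$ on $[0,\varepsilon]$ and by the time-shifted $\beta^{m(u|_{[0,\varepsilon]})}$ on $(\varepsilon,\infty)$, where $m(u_0)$ is the index with $Z^{\beta_0}_\varepsilon(u_0) \in B_m$. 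Discretization sidesteps the otherwise non-trivial measurable selection of a continuation strategy as a function of $u_0$.

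For any pure $\alpha \in \Sigma$ and $p \in \Delta(K)$, conditioning on $\CF^X_\varepsilon$ and applying the Markov-type property of Lemma \ref{girsanov}\,(c), the continuation game starts from $\delta_{X_\varepsilon}$ with player 1's continuation pure strategy $\alpha^{\rm cont}$ (determined by $X|_{[0,\varepsilon]}$) and player 2's mixed strategy $\beta^{m(u^{\alpha,\beta_0}|_{[0,\varepsilon]})}$, yielding
\[J(p,\alpha,\bar\beta) = \EE_p^{\alpha,\beta_0}\!\left[\int_0^\varepsilon re^{-rt}g(X_t, u_t, v_t)dt + e^{-r\varepsilon} J(\delta_{X_\varepsilon}, \alpha^{\rm cont}, \beta^{m(u^{\alpha,\beta_0}|_{[0,\varepsilon]})})\right].\]
Combining the $\delta$-optimality of $\beta^m$, the Lipschitz continuity of $W^{+,*}$ (inherited from that of $W^+$), and $|Z^{\beta_0}_\varepsilon(u_0)-z_{m(u_0)}|\leq\delta$, one bounds the continuation integrand pointwise above by $\langle Z^{\beta_0}_\varepsilon(u^{\alpha,\beta_0}|_{[0,\varepsilon]}), \delta_{X_\varepsilon}\rangle - W^{+,*}(Z^{\beta_0}_\varepsilon(u^{\alpha,\beta_0}|_{[0,\varepsilon]})) + O(\delta)$.

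Finally, using that $e^{-\int_0^t \tr R(u_s,v_s)ds}\delta_{X_t}$ is a $(\PP_p^{\alpha,\beta_0},\FF^X)$-martingale (Lemma \ref{girsanov}\,(b)), so in particular $\EE[e^{-\int_t^\varepsilon \tr R\, ds}\delta_{X_\varepsilon}\,|\,\CF^X_t] = \delta_{X_t}$, the definition of $Z^{\beta_0}_\varepsilon$ together with Fubini gives
\[e^{-r\varepsilon}\EE_p^{\alpha,\beta_0}\!\left[\langle Z^{\beta_0}_\varepsilon(u^{\alpha,\beta_0}|_{[0,\varepsilon]}), \delta_{X_\varepsilon}\rangle\right] = \langle x,p\rangle - \EE_p^{\alpha,\beta_0}\!\left[\int_0^\varepsilon re^{-rt} g(X_t,u_t,v_t)\,dt\right].\]
After the running-payoff terms cancel, we are left with $\langle x,p\rangle - J(p,\alpha,\bar\beta) \geq e^{-r\varepsilon}\inf_{u\in\CU} W^{+,*}(Z^{\beta_0}_\varepsilon(u)) - O(\delta)$, bounding the random $W^{+,*}$ from below by its infimum. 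Taking $\inf_{\alpha,p}$, then $\sup_{\beta_0\in\CT}$, applying Lemma \ref{lem:alternat} to the left-hand side, and letting $\delta\to 0$ yields the claim. The main technical obstacle is the measurable dependence of the continuation strategy on the observed control on $[0,\varepsilon]$, which is precisely what the discretization through $(B_m)$ handles cleanly.
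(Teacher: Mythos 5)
Your proposal is correct and follows essentially the same route as the paper's proof: the same dual representation from Lemma \ref{lem:alternat}, the same discretized composite strategy $\bar\beta$ built from a partition of the range of $Z^{\beta_0}_\varepsilon$, the same use of the martingale property of $e^{-\int_0^t \tr R\,ds}\delta_{X_t}$ to convert $\langle x,p\rangle$ and the running payoff into $e^{-r\varepsilon}\EE[\langle Z^{\beta_0}_\varepsilon,\delta_{X_\varepsilon}\rangle]$, and the same conditioning via Lemma \ref{girsanov}. No substantive differences to report.
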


\begin{proof}
Let $\delta>0$ and $\beta_0 \in \CT$ such that 
\begin{align} \label{eq:dualDPP0}
\inf_{u \in \CU}\;&  e^{-r\varepsilon} W^{+,*}(Z^{\beta_0}_\varepsilon(u)) \geq \sup_{\beta\in\CT} \;\inf_{u \in \CU}\;  e^{-r\varepsilon} W^{+,*}(Z^{\beta}_\varepsilon(u)) - \delta
\end{align}
Note that since $R$ and $g$ are bounded, there exists a compact set $C_\varepsilon$ such that 
\[ \forall u \in \CU,\; Z^{\beta_0}_\varepsilon(u) \in C_\varepsilon.\] 
Let $(A_m)_{m=1,...,N}$ be a measurable partition of $C_\varepsilon$ with mesh smaller than $\delta$ and for all $m=1,...,N$, let $z_m \in A_m$.
For all $m=1,...,N$, let $\beta_m \in \widehat{\CT}$ such that
\[ \inf_{\alpha \in \Sigma}\inf_{p \in \Delta(K)} \langle z_m , p \rangle - J(p,\alpha,\beta_m) \geq W^+(z_m) - \delta .\]
We now construct a strategy $\bar\beta$ with probability space 
\[(M_{\bar\beta},\CA_{\bar\beta}, \lambda_{\bar\beta})=(\prod_{m=1}^N M_{\beta_m}, \bigotimes_{m=1}^N\CA_{\beta_m},\bigotimes_{m=1}^N \lambda_{\beta_m})\] 
defined by the formula:  $\forall (\xi_{\bar\beta},u,t) \in M_{\bar\beta}  \times \CU \times [0,\infty)$,
\[ \bar\beta(\xi_{\bar\beta},u)_t= \indic_{[0,\varepsilon]}(t)\beta_0(u)_t + \indic_{(\varepsilon,\infty)}(t) \sum_{m=1}^N \indic_{A_m}(Z^{\beta_0}_\varepsilon(u)) \beta_m(\xi_{\beta_m},  u|_{[\varepsilon,\infty)})_{t-\varepsilon}.\]
$\bar\beta$ is therefore a well-defined strategy in $\widehat{\CT}$.
Let us fix a strategy $\alpha \in \Sigma$.
Note that by construction, we have $(u^{\alpha,\beta_0},v^{\alpha,\beta_0})|_{[0,\varepsilon]}=(u^{\alpha,\bar\beta},v^{\alpha,\bar\beta})|_{[0,\varepsilon]}$ and therefore
\[ \forall t \in [0,\varepsilon],\; G^{\beta_0}_{t,\varepsilon}(u^{\alpha,\beta_0})= e^{-\int_t^\varepsilon R(u^{\alpha,\bar\beta}_s,v^{\alpha,\bar\beta}_s)ds} \; \bar g(u^{\alpha,\bar\beta}_t,v^{\alpha,\bar\beta}_t), \]
\[ \; Y^{\beta_0}_\varepsilon(u^{\alpha,\beta_0})=e^{-\int_0^\varepsilon R(u^{\alpha,\bar\beta}_s,v^{\alpha,\bar\beta}_s)ds} \; x, \]
and that all these variables depend only on $u^{\alpha,\beta_0}$ through $u^{\alpha,\beta_0}|_{[0,\varepsilon]}$ and thus are measurable functions of $\omega|_{[0,\varepsilon]}$.
Recall (Lemma \ref{girsanov}) that the process 
\[M^{\alpha,\bar\beta}_t = e^{-\int_0^t \tr R(u^{\alpha,\bar\beta}_s,v^{\alpha,\bar\beta}_s)ds}\delta_{X_t},\]
is a $(\PP_p^{\alpha,\bar\beta(\xi_{\bar\beta})}, \FF^X)$ martingale, which implies
\begin{align} \label{eq:dualDPP1}
\langle x,p\rangle&=\EE_p^{\alpha,\bar\beta(\xi_{\bar\beta})}[ \langle x,e^{-\int_0^\varepsilon \tr R(u^{\alpha,\bar\beta}_s,v^{\alpha,\bar\beta}_s)ds}\delta_{X_\varepsilon} \rangle ] = \EE_p^{\alpha,\bar\beta(\xi_{\bar\beta})}[\langle Y^{\beta_0}_\varepsilon(u^{\alpha,\beta_0}),\delta_{X_\varepsilon} \rangle] .
\end{align}
Similarly, we have 
\begin{align}
\EE_p^{\alpha,\bar\beta(\xi_{\bar\beta})}&[ \int_0^\varepsilon re^{-rt}g(X_t,u^{\alpha,\bar\beta}_t,v^{\alpha,\bar\beta}_t)dt] \notag \\
&= \EE_p^{\alpha,\bar\beta(\xi_{\bar\beta})}[ \int_0^\varepsilon re^{-rt} \left\langle \delta_{X_t} ,\bar g(u^{\alpha,\bar\beta}_t,v^{\alpha,\bar\beta}_t) \right\rangle dt] \notag\\
&=\EE_p^{\alpha,\bar\beta(\xi_{\bar\beta})}[ \int_0^\varepsilon re^{-rt} \left\langle \EE_p^{\alpha,\bar\beta(\xi_{\bar\beta})}[e^{-\int_{t}^\varepsilon \tr R(u^{\alpha,\bar\beta}_s,v^{\alpha,\bar\beta}_s)ds} \delta_{X_\varepsilon} |\CF^X_t] \, , \,\bar g(u^{\alpha,\bar\beta}_t,v^{\alpha,\bar\beta}_t) \right\rangle dt] \notag \\
&=\EE_p^{\alpha,\bar\beta(\xi_{\bar\beta})}[ \int_0^\varepsilon re^{-rt} \left\langle e^{-\int_{t}^\varepsilon \tr R(u^{\alpha,\bar\beta}_s,v^{\alpha,\bar\beta}_s)ds} \delta_{X_\varepsilon} \, , \,\bar g(u^{\alpha,\bar\beta}_t,v^{\alpha,\bar\beta}_t) \right\rangle dt] \notag \\
&=\EE_p^{\alpha,\bar\beta(\xi_{\bar\beta})}[ \int_0^\varepsilon re^{-rt} \left\langle  \delta_{X_\varepsilon} \, , \, G^{\beta_0}_{t,\varepsilon}(u^{\alpha,\beta_0}) \right\rangle dt]\notag \\
&=\EE_p^{\alpha,\bar\beta(\xi_{\bar\beta})}[\left\langle \delta_{X_\varepsilon} \, , \,  \int_0^\varepsilon re^{-rt}   G^{\beta_0}_{t,\varepsilon}(u^{\alpha,\beta_0}) \right\rangle dt]. \label{eq:dualDPP2}
\end{align}
Define $h_{\varepsilon}=(\omega,v^{\alpha,\bar\beta})|_{[0,\varepsilon]}=(\omega,v^{\alpha,\beta^0})|_{[0,\varepsilon]}$, and note that $h_\varepsilon$ is a measurable function of $\omega|_{[0,\varepsilon]}$.
Let $T'$ denote the grid of $\alpha$. We may  assume that $\varepsilon=t'_n$ for some integer $n$.
For all $(\omega,v) \in \CU_\varepsilon\times \Omega_\varepsilon$, define the continuation strategy $\alpha^\varepsilon(\omega,v) \in \Sigma$  by 
\[ \forall v' \in \CV,\forall \omega'\in \Omega \; \alpha^{\varepsilon}(\omega,v)(\omega',v')_t=\sum_{p \geq n} \indic_{(t'_p,t'_{p+1}]}(t+\varepsilon) \alpha(\omega\oplus_{\varepsilon-}\omega', v\oplus_\varepsilon v')_{t+\varepsilon},\]
where 
\[ \forall (v,v') \in \CU_\varepsilon \times \CU, \; (v\oplus_\varepsilon v')_t = \indic_{[0,\varepsilon]}(t)v_t +\indic_{(\varepsilon,+\infty)}(t)v'_{t-\varepsilon},\]
and
\[ \forall (\omega,\omega') \in \Omega_\varepsilon \times \Omega, \; (\omega\oplus_{\varepsilon-} \omega')_t = \indic_{[0,\varepsilon)}(t)\omega(t) +\indic_{[\varepsilon,+\infty)}(t)\omega'(t-\varepsilon).\]
\p
Note first that by construction, we have the identity 
\begin{multline}\label{eq:identic_strat_2}
(u^{\alpha,\bar\beta}_{\varepsilon+s},v^{\alpha,\bar\beta}_{\varepsilon+s})(\xi_{\bar\beta},\omega) 
= \sum_{m=1}^N \indic_{A_m}(Z^{\beta_0}_\varepsilon(u^{\alpha,\beta_0})) (u^{\alpha^\varepsilon(h_\varepsilon),\beta^m}_s,v^{\alpha^\varepsilon(h_\varepsilon),\beta^m}_s) (\xi_{\beta^m},\omega|_{[\varepsilon,\infty)}).
\end{multline}
According to Lemma \ref{girsanov}, the map 
\[ (\xi_{\bar\beta},\omega|_{[0,\varepsilon]}) \rightarrow \Phi(\xi_{\bar\beta},\omega|_{[0,\varepsilon]})=\sum_{m=1}^N \indic_{A_m}(Z^{\beta_0}_\varepsilon(u^{\alpha,\beta_0})) \PP_{\delta_{X_\varepsilon}}^{\alpha^\varepsilon(h_\varepsilon), \beta_m(\xi_{\beta_m})},\]
is a version of the conditional distribution of $(X_{\varepsilon+s})_{s \geq 0}$ given $(\xi_{\bar\beta},X|_{[0,\varepsilon]})$.
Using these results, we have
\begin{align*}
J(p,\alpha,\bar\beta)&= \EE_p^{\alpha,\bar\beta}[\int_0^\varepsilon re^{-rt}g(X_t,u^{\alpha,\bar\beta}_t,v^{\alpha,\bar\beta}_t)dt + e^{-r \varepsilon} \int_0^\infty re^{-rt}g(X_{t+\varepsilon},u^{\alpha,\bar\beta}_{t+\varepsilon},v^{\alpha,\bar\beta}_{t+\varepsilon})dt]\\
&=\EE_p^{\alpha,\bar\beta}[\int_0^\varepsilon re^{-rt}g(X_t,u^{\alpha,\bar\beta}_t,v^{\alpha,\bar\beta}_t)dt + e^{-r \varepsilon} \EE_p^{\alpha,\bar\beta}[\int_0^\infty re^{-rt}g(X_{t+\varepsilon},u^{\alpha,\bar\beta}_{t+\varepsilon},v^{\alpha,\bar\beta}_{t+\varepsilon})dt | \xi_{\bar\beta},  X|_{[0,\varepsilon]}]]\\
&=\EE_p^{\alpha,\bar\beta}[\int_0^\varepsilon re^{-rt}g(X_t,u^{\alpha,\bar\beta}_t,v^{\alpha,\bar\beta}_t)dt \\
& + e^{-r \varepsilon}  \sum_{m=1}^N \indic_{A_m}(Z^{\beta_0}_\varepsilon(u^{\alpha,\beta_0})) \EE_{\delta_{X_\varepsilon}}^{\alpha^\varepsilon(h_\varepsilon), \beta_m} [\int_0^\infty re^{-rt}g(\bar X_t,u^{\alpha^{\varepsilon}(h_\varepsilon),\beta_m}_{t}(\bar X),v^{\alpha^{\varepsilon}(h_\varepsilon),\beta_m}_{t}(\bar X))dt ] ]\\
&=\EE_p^{\alpha,\bar\beta}[\int_0^\varepsilon re^{-rt}g(X_t,u^{\alpha,\bar\beta}_t,v^{\alpha,\bar\beta}_t)dt + e^{-r \varepsilon} \sum_{m=1}^N \indic_{A_m}(Z^{\beta_0}_\varepsilon(u^{\alpha,\beta_0})) J(\delta_{X_\varepsilon},\alpha^\varepsilon(h_\varepsilon),\beta_m) ],
\end{align*}
where we used the notation $\bar X$ for the canonical process on $\Omega$ in the fourth line to avoid potential confusions.   
Using now \eqref{eq:dualDPP0},\eqref{eq:dualDPP1} and \eqref{eq:dualDPP2}, we have (with the shorter notation $Z^{\beta_0}_\varepsilon=Z^{\beta_0}_\varepsilon(u^{\alpha,\beta_0})$)
\begin{align*}
W^{+,*}(x)&\geq \;\inf_{\alpha\in\Sigma}\; \inf_{p \in \Delta(K)}  e^{-r\varepsilon}\EE_p^{\alpha,\bar\beta}[ \langle Z^{\beta_0}_\varepsilon \;,\;\delta_{X_\varepsilon} \rangle - \sum_{m=1}^N \indic_{A_m}(Z^{\beta_0}_\varepsilon) J(\delta_{X_\varepsilon}, \alpha^\varepsilon(h_\varepsilon), \beta_m) ] \\
&\geq \;\inf_{\alpha\in\Sigma}\; \inf_{p \in \Delta(K)}  e^{-r\varepsilon}\EE_p^{\alpha,\bar\beta}[  \sum_{m=1}^N \indic_{A_m}(Z^{\beta_0}_\varepsilon) \{\langle Z^{\beta_0}_\varepsilon\;,\;\delta_{X_\varepsilon} \rangle -  J(\delta_{X_\varepsilon}, \alpha^\varepsilon(h_\varepsilon), \beta_m) \} ] \\
&\geq \;\inf_{\alpha\in\Sigma}\; \inf_{p \in \Delta(K)}  e^{-r\varepsilon}\EE_p^{\alpha,\bar\beta}[  \sum_{m=1}^N \indic_{A_m}(Z^{\beta_0}_\varepsilon) \{ \langle z_m \;,\;\delta_{X_\varepsilon}\rangle -  J(\delta_{X_\varepsilon}, \alpha^\varepsilon(h_\varepsilon), \beta_m) \} ] - \delta \\
&\geq \;\inf_{\alpha\in\Sigma}\; \inf_{p \in \Delta(K)}  e^{-r\varepsilon}\EE_p^{\alpha,\bar\beta}[  \sum_{m=1}^N \indic_{A_m}(Z^{\beta_0}_\varepsilon) \{ W^{+,*}(z_m) - \delta \} ] - \delta \\
&\geq \;\inf_{\alpha\in\Sigma}\; \inf_{p \in \Delta(K)}  e^{-r\varepsilon}\EE_p^{\alpha,\beta_0}[   W^{+,*}(Z^{\beta_0}_\varepsilon)  ]- \sqrt{K} \delta - 2\delta \\
&\geq \sup_{\beta \in {\CT}}\;\inf_{\alpha\in\Sigma}\; \inf_{p \in \Delta(K)}  e^{-r\varepsilon}\EE_p^{\alpha,\beta}[  W^{+,*}(Z^{\beta}_\varepsilon(u^{\alpha,\beta}))  ]- \sqrt{K} \delta - 3\delta 
\end{align*}
By sending $\delta $ to zero, we deduce that
\begin{align*}
W^{+,*}(x)&\geq \sup_{\beta \in {\CT}}\;\inf_{\alpha\in\Sigma}\; \inf_{p \in \Delta(K)}  e^{-r\varepsilon}\EE_p^{\alpha,\beta}[   W^{+,*}(Z^{\beta}_\varepsilon(u^{\alpha,\beta}))  ] 
\end{align*}
Note finally that the quantity inside the expectation $\EE_p^{\alpha,\beta}[   W^{+,*}(Z^{\beta}_\varepsilon(u^{\alpha,\beta}))  ]$ depends on $X$ only through the process $u^{\alpha,\beta}$, and therefore
\[ \EE_p^{\alpha,\beta}[   W^{+,*}(Z^{\beta}_\varepsilon(u^{\alpha,\beta}))  ] \geq \inf_{u \in \CU}  W^{+,*}(Z^{\beta}_\varepsilon(u)). \]
We deduce that
\begin{align*}
W^{+,*}(x)&\geq \sup_{\beta \in {\CT}}  \inf_{u \in \CU} e^{-r\varepsilon} W^{+,*}(Z^{\beta}_\varepsilon(u))  , 
\end{align*}
which concludes the proof. 
\end{proof}

\begin{proposition}\label{prop:dualvisco}
The map $W^{+,*}$ is a viscosity supersolution of \eqref{eq:dualHJB}.
\end{proposition}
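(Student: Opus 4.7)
My plan is to derive the viscosity supersolution property directly from the dynamic programming inequality in Proposition \ref{prop:dualDPP} via a standard first-order expansion. First, observe that $W^{+,*}$, being the infimum over the compact set $\Delta(K)$ of continuous affine maps, is concave and continuous on $\RR^K$, hence lower semi-continuous, so the supersolution framework applies.

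Let $\varphi$ be a smooth test function and $x\in \RR^K$ such that $\varphi - W^{+,*}$ attains a global maximum at $x$. After replacing $\varphi$ by $\varphi - (\varphi(x) - W^{+,*}(x))$, I may assume $\varphi(x) = W^{+,*}(x)$ and $\varphi \leq W^{+,*}$ globally. Proposition \ref{prop:dualDPP} then gives for every $\varepsilon>0$,
\[
\varphi(x) \;\geq\; \sup_{\beta \in \CT}\;\inf_{u \in \CU}\; e^{-r\varepsilon}\varphi(Z^\beta_\varepsilon(u)).
\]
Expanding the matrix exponentials in the definition of $Z^\beta_\varepsilon(u)$ and using boundedness of $R$ and $\bar g$, I obtain a bound that is uniform in $(\beta,u)$:
\[
Z^\beta_\varepsilon(u) - x \;=\; \int_0^\varepsilon \bigl[ rx - R(u_s,\beta(u)_s)\,x - r\bar g(u_s,\beta(u)_s)\bigr]\,ds \;+\; O(\varepsilon^2).
\]
Combining with the Taylor expansions of $\varphi$ at $x$ and of $e^{-r\varepsilon}$, substituting into the DPP, dividing by $\varepsilon$ and rearranging, and using the identities $\langle \nabla\varphi(x), R(u,v)x\rangle = \langle \tr R(u,v)\nabla\varphi(x), x\rangle$ together with the linear extension of $g(\cdot,u,v)$ from $\Delta(K)$ to $\RR^K$, I arrive at
\[
r\langle\nabla\varphi(x),x\rangle - r\varphi(x) \;\leq\; \inf_{\beta\in\CT}\,\sup_{u\in\CU}\,\frac{1}{\varepsilon}\int_0^\varepsilon \bigl[\langle \tr R(u_s,\beta(u)_s)\nabla\varphi(x),x\rangle + rg(\nabla\varphi(x),u_s,\beta(u)_s)\bigr]\,ds + O(\varepsilon).
\]

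To recover $H(\nabla\varphi(x),x)$, I restrict $\beta$ to a constant pure strategy playing some fixed $v\in V$ on every interval of its grid, which is a valid element of $\CT$ and can only increase the infimum. For such $\beta$ the integrand depends only on $(u_s,v)$, and the supremum over $u\in \CU$ of the time-average equals the pointwise supremum over $u\in U$ (achieved by constant controls $u_s \equiv u^\star$). Applying the Isaacs condition in its extended form on $\RR^K\times\RR^K$, this yields
\[
r\langle\nabla\varphi(x),x\rangle - r\varphi(x) \;\leq\; \inf_{v\in V}\,\sup_{u\in U}\bigl[\langle \tr R(u,v)\nabla\varphi(x),x\rangle + rg(\nabla\varphi(x),u,v)\bigr] + O(\varepsilon) \;=\; H(\nabla\varphi(x),x) + O(\varepsilon),
\]
and sending $\varepsilon\downarrow 0$ gives \eqref{eq:dualHJB}. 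The main technical point is the uniformity in $(\beta,u)$ of the $O(\varepsilon^2)$ remainder in the expansion of $Z^\beta_\varepsilon(u)$, which follows routinely from $\|R\|_\infty,\|\bar g\|_\infty<\infty$ and the $C^2$ regularity of $\varphi$ near $x$; everything else is algebraic manipulation of the DPP.
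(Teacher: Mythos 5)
Your proof is correct and follows essentially the same route as the paper: both derive the supersolution property from the dual dynamic programming inequality of Proposition \ref{prop:dualDPP} by restricting player 2 to a constant pure strategy $\beta\equiv v$, performing a first-order expansion of $Z^{\beta}_\varepsilon(u)$ with a remainder uniform in $(u,\beta)$ thanks to the boundedness of $R$ and $\bar g$, dividing by $\varepsilon$ and letting $\varepsilon\downarrow 0$. The only difference is that the paper argues by contradiction (fixing a $v_0$ that witnesses the failure of the inequality), whereas you argue directly and take the infimum over $v$ at the end --- a purely cosmetic distinction.
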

\begin{proof}
Assume that the property does not hold. Then there exists a smooth test function $\phi$ and $x\in \RR^K$ such that $\phi \leq W^{+,*}$ on $\RR^K$ with equality at $x$ and
\[ r \phi(x)+H(\nabla \phi(x),x) -r \langle \nabla \phi (x),x \rangle       <  0.\]
This implies that there exists $v_0 \in V$ and $\delta>0$ such that for all $u\in U$
\[ r \phi(x)+ \langle  R(u,v_0) x + r \bar g(u,v_0), \nabla \phi(x) \rangle  -r \langle \nabla \phi (x),x \rangle       \leq  - \delta.\]
Thanks to Proposition \ref{prop:dualDPP}, for all $\varepsilon>0$, we have
\begin{align*}
\phi(x) =W^{+,*}(x)\geq \sup_{\beta \in {\CT}} \inf_{u \in \CU} e^{-r\varepsilon} W^{+,*}(Z^{\beta}_\varepsilon(u))\geq \sup_{\beta \in {\CT}} \inf_{u \in \CU} e^{-r\varepsilon} \phi(Z^{\beta}_\varepsilon(u)) . 
\end{align*}
Considering the pure strategy $\beta_0$ which plays the constant control $v_0$, we have for all $\varepsilon>0$
\begin{align*}
\phi(x)\geq \inf_{u \in \CU} e^{-r\varepsilon} \phi\left( Z^{\beta_0}_{\varepsilon}(u)  \right). 
\end{align*}
Define the absolutely continuous map (which depends on $v_0$, $u$ and $\varepsilon$)
\[t \in [0,\varepsilon]  \rightarrow \tilde Z_t =e^{rt} \left( e^{-\int_0^t R(u_s,v_0)ds} \; x -\int_0^t re^{-rs}e^{-\int_s^\varepsilon R(u_r,v_0)dr} \; \bar g(u_s,v_0) ds \right) \]
so that $Z^{\beta_0}_\varepsilon(u)=\tilde Z_\varepsilon$ and therefore we obtain by applying the chain rule formula:
\begin{multline*}
\phi(Z^{\beta_0}_\varepsilon(u))- \phi(x) = \\
\int_0^{\varepsilon} \left\langle \nabla\phi(\tilde Z_t(u)) \, , \, r\tilde Z_t + e^{rt}\left ( -R(u_t,v_0)e^{-\int_0^t R(u_s,v_0)ds} \; x -re^{-rt}e^{-\int_t^\varepsilon R(u_s,v_0)ds} \; \bar g(u_t,v_0)   \right) \right\rangle  dt
\end{multline*}
Since $R$, $\bar g$ are bounded and $\phi$ is smooth, there exists a constant $C$ such that
\[ \phi(Z_\varepsilon(u)) \geq \phi(x) + \int_0^{\varepsilon} \left\langle \nabla\phi(x) \, , \, \left ( rx -R(u_t,v_0)x -r \bar g(u_t,v_0) \right) \right\rangle  dt - C\varepsilon^2.\]
We obtain
\[ (1-e^{-r\varepsilon})\phi(x)  \geq \varepsilon (r\phi(x) + \delta) - C\varepsilon ^2\]
Dividing the above inequality by $\varepsilon$ and sending $\varepsilon$ to zero leads to a contradiction, which concludes the proof.\end{proof}

\begin{proposition}\label{prop:dualprimal}
$W^+$ satisfies \eqref{sub_var}.
\end{proposition}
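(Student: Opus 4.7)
The strategy is to prove the pointwise inequality $rW^+(p) \leq H(p, z)$ for every $p \in Exp(W^+)$ and every $z \in \partial^+ W^+(p)$; by Lemma \ref{lem:minmaxH} this is exactly \eqref{sub_var}. I would first handle the case where $z = x_0$ is a strictly exposing vector for $p$, and then extend to arbitrary $z \in \partial^+ W^+(p)$ by the classical convex-combination trick: for $\lambda \in (0, 1]$, the vector $z_\lambda = \lambda x_0 + (1-\lambda) z$ lies in $\partial^+ W^+(p)$ and still strictly exposes $p$, because the strict inequality $\langle x_0, p' - p \rangle > W^+(p') - W^+(p)$ for $p' \neq p$ combined with the weak superdifferential inequality inherited from $z$ produces a strict inequality for $z_\lambda$. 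Applying the result at $z_\lambda$ and passing to the limit $\lambda \to 0$ using the local Lipschitz continuity of $H(p, \cdot)$ from Lemma \ref{lem:properties} yields the conclusion at $z$.

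To prove $rW^+(p) \leq H(p, x_0)$ for a strictly exposing $x_0$, I would exploit the dual viscosity supersolution property of Proposition \ref{prop:dualvisco}. The critical convex-analytic fact is that unique attainability in the definition $W^{+,*}(x_0) = \langle x_0, p\rangle - W^+(p)$ implies that $W^{+,*}$ is Fr\'echet differentiable at $x_0$ with $\nabla W^{+,*}(x_0) = p$. Together with the global Lipschitz regularity of $W^{+,*}$, this allows the construction of a smooth test function $\varphi : \RR^K \to \RR$ of the form $\varphi(y) = W^{+,*}(x_0) + \langle p, y - x_0 \rangle - \Phi(y - x_0)$, where $\Phi$ is a smooth nonnegative function with $\Phi(0) = 0$ and $\nabla \Phi(0) = 0$ chosen to majorize the tangent excess $L(y) - W^{+,*}(y)$ globally; this is possible because this excess is $o(|y - x_0|)$ near $x_0$ by Fr\'echet differentiability and at most linear at infinity by the Lipschitz bound. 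By construction $\varphi \leq W^{+,*}$ on $\RR^K$, with equality at $x_0$ and $\nabla \varphi(x_0) = p$.

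Applying Proposition \ref{prop:dualvisco} with this test function gives $r\varphi(x_0) + H(\nabla \varphi(x_0), x_0) - r\langle \nabla\varphi(x_0), x_0\rangle \geq 0$; substituting $\varphi(x_0) = \langle x_0, p\rangle - W^+(p)$ and $\nabla \varphi(x_0) = p$ simplifies this at once to $rW^+(p) \leq H(p, x_0)$, completing the main step. The principal technical obstacle is the construction of $\varphi$: one must exhibit a $C^\infty$ majorant $\Phi$ that reconciles the only continuous modulus of Fr\'echet differentiability of $W^{+,*}$ at $x_0$ with the linear Lipschitz bound at infinity---a standard but delicate smoothing argument. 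Everything else is either immediate algebra or an appeal to already-established results (Lemmas \ref{lem:properties} and \ref{lem:minmaxH} and Proposition \ref{prop:dualvisco}).
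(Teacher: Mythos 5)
Your proposal is correct and follows essentially the same route as the paper: both arguments exploit that a strictly exposing vector (and any convex combination $\lambda x_0+(1-\lambda)z$ with $z\in\partial^+W^+(p)$) is a point of differentiability of $W^{+,*}$ with gradient $p$, apply the dual supersolution property of Proposition \ref{prop:dualvisco} there, and let $\lambda\downarrow 0$ using the Lipschitz continuity of $H$ before invoking Lemma \ref{lem:minmaxH}. The only presentational differences are that you make explicit the smooth test function touching $W^{+,*}$ from below (which the paper leaves implicit in the phrase ``Proposition \ref{prop:dualvisco} implies''), while the paper routes the final minimization through the Moreau--Yosida extension $\hat W^+$ so that Lemma \ref{lem:minmaxH} applies to a concave Lipschitz function on all of $\RR^K$ with compact superdifferential --- a detail your appeal to that lemma silently absorbs but which causes no gap since your inequality holds on the larger set $\partial^+W^+(p)\supseteq\partial^+\hat W^+(p)$.
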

\begin{proof}
First recall that $W^{+,*}$ is concave and Lipschitz since the domain of $W^+$ is bounded.
Let $p \in Exp(W^+)$ and $x \in\partial^+ W^+(p)$ such that
\[ \argmin_{p' \in \Delta(K)} \langle x, p' \rangle - W^+(p') = \{p\}.\]
Let $\hat W^+ : \RR^K \rightarrow \RR$ denote the Moreau-Yosida regularization of $W^+$ defined by 
\begin{equation*}
 \forall y\in \RR^K, \; \hat W^+(y)= \sup_{p' \in \Delta(K)} f(p') - M|y-p'|,
\end{equation*}
for some constant $M$ larger than the Lipschitz constant of $W^+$. It is well-known that $\hat{W}^+$ is concave and $M$-Lipschitz on $\RR^K$ and coincides with $W^+$ on $\Delta(K)$ so that
\begin{equation}\label{eq:direct_coincides_2}
 \forall p' \in \Delta(K), \forall z \in T_{\Delta(K)}(p'), \; \vec D W^+=(p';z)= \vec D \hat W^+(p';z)= \min_{x \in \partial^+ \hat W^+(p')} \langle x,z \rangle,
 \end{equation}  
where  $\partial^+ \hat W^+(p')$ is a compact convex subset of $\partial^+W^+(p')$.
\p
Let $y \in \partial^+ \hat W^+(p)$ and note that for all $\lambda \in (0,1]$, we have  
\[ \argmin_{p' \in \Delta(K)} \;\langle y_\lambda, p' \rangle - W^+(p') = \{p\}.\]
with $y_\lambda= \lambda x +(1-\lambda) y$.
Since 
\[ \partial^{+}W^{+,*}(y_\lambda)=\argmin_{p' \in \Delta(K)} \; \langle y_\lambda, p' \rangle - W^+(p') = \{p\},\]
we deduce that $W^{+,*}$ is differentiable at $y_\lambda$ and that $\nabla W^{+,*}(y_\lambda)=p$. Proposition \ref{prop:dualvisco} implies that for all $\lambda \in (0,1]$
\[ r W^+(p)= r (\langle p,y\lambda \rangle -W^{+,*}(y_\lambda)) \leq H(p, y_\lambda).\]
By sending $\lambda$ to zero, we obtain
\[ r W^+(p) \leq H(p,y),\]
and the proof by taking the minimum over all $y\in \partial^+ \hat W^+(p)$ and applying Lemma \ref{lem:minmaxH} and \eqref{eq:direct_coincides_2}.
\end{proof}




\end{document}